\documentclass[12pt,reqno]{amsart}
\usepackage{enumerate}
\usepackage{amsrefs}
\usepackage{amsfonts, amsmath, amssymb, amscd, amsthm, bm, cancel}
\usepackage{url}
\usepackage{graphicx}
\usepackage[
linktocpage=true,colorlinks,citecolor=magenta,linkcolor=blue,urlcolor=magenta]{hyperref}
\usepackage{multicol}
\usepackage{comment}
\usepackage[margin=1in]{geometry}
\parskip = 0.13cm

\newtheorem{thm}{Theorem}[section]

  \newtheorem{lem}{Lemma}[section]

 \theoremstyle{definition}
 
  \newtheorem*{ack}{Acknowledgments}
 \theoremstyle{remark}
\newtheorem{rem}{Remark}[section]

 \numberwithin{equation}{section}

\setcounter{tocdepth}{1}
\allowdisplaybreaks


\newcommand{\f}{\left(}
\renewcommand{\r}{\right)}

\newcommand{\R}{\mathbb{R}}

\renewcommand{\a}{\alpha}
\renewcommand{\b}{\beta}

\renewcommand{\d}{\delta}
\newcommand{\e}{\varepsilon}
\renewcommand{\k}{\kappa}

\renewcommand{\t}{\theta}
\newcommand{\s}{\sigma}





\newcommand{\metric}[2]{\ensuremath{\langle #1, #2\rangle}}  

\begin{document}
\title{$C^2$ estimates for $k$-Hessian equations and a rigidity theorem}

\author{RUIJIA ZHANG}
\address{Key Laboratory of Pure and Applied Mathematics School of Mathematical Sciences, Peking University, Beijing 100871, P. R. China}
\email{\href{mailto:zhangrj17@mails.tsinghua.edu.cn}{zhangrj@pku.edu.cn}}

\keywords{semi-convex, interior estimates, Liouville problem}
\subjclass[2020]{35J60, 53C42}


\begin{abstract}
 We derive a concavity inequality for $k$-Hessian operators under the semi-convexity condition. As an application, we establish interior estimates for semiconvex solutions to the $k$-Hessian equations with vanishing Dirichlet boundary conditions and obtain a Liouville-type result. Additionally, we provide new and simple proofs of Guan-Ren-Wang's work \cite{GRW} on global curvature estimates for $k$-curvature equations.
\end{abstract}

\maketitle

\section{introduction}\label{sec:1}
In this paper, we consider the following $k$-Hessian equations with Dirichlet boundary
\begin{equation}\label{eq1}
\left\{\begin{aligned}
\s_k(D^2 u)&= f(x,u, \nabla u) \quad &\text{in} \ &\Omega, \\
u&=0 \quad&\text{on} \ &\partial \Omega,
\end{aligned}\right.
\end{equation}
where $\Omega$ is a bounded domain in $\R^n$, $f$ is a positive function in $\overline{\Omega}\times R\times R^n$, $u$ is a function defined in $\overline{\Omega}$. Here we denote by $\nabla u$ and $D^2 u$ the gradient of $u$ and the Hessian of $u$, respectively. 
$\s_k$ is the $k$-th elementary symmetric polynomial of $\lambda=(\lambda_1,\dots,\lambda_n)\in \R^n$, which is defined as 
\begin{align*}
    \s_k(x)=\sum_{1\leqslant i_1<i_2...<i_k\leqslant n}\lambda_{i_1}\lambda_{i_2}\cdots\lambda_{i_k},\quad\forall i=1,\dots,n.
\end{align*}
Let $\lambda$ be the eigenvalues of $D^2 u$. Then the $k$-Hessian operator of $u$ is defined by $\s_k(D^2 u)=\s_k(\lambda(D^2))$.

Research on the $k$-Hessian equation 
\begin{equation}\label{eq11}
\s_k(D^2 u)= f(x,u, \nabla u)    
\end{equation}
is a vital area within PDE and geometric analysis. We note that \eqref{eq11} simplifies to a semilinear equation when $k=1$. For $2\leqslant k\leqslant n$, establishing second-order a priori estimates is essential to prove the existence of solutions to these fully nonlinear equations. Pogorelov was the first to investigate the interior $C^2$ estimates for the Monge-Amp\'{e}re equations, as referred to in \cites{GT,Pog}. For general $k$, Chou-Wang \cite{CW} established the Pogorelov estimates for $k$-convex solutions of the $k$-Hessian equation 
\begin{equation}\label{eq12}
\left\{\begin{aligned}
\s_k(D^2 u)&= f(x,u) \quad &\text{in} \ &\Omega\\
u&=0 \quad&\text{on} \ &\partial \Omega,
\end{aligned}\right.
\end{equation}
where $f$ is independent of $\nabla u$. They proved that 
\begin{align*}
 (-u)^{\b}\Delta u\leqslant C
\end{align*}
for some constants $\b>1$ and $C>0$ depending on $n$, $k$, $\inf f$, $\|f\|_{C^{1,1}}$, $\|u\|_{C^1}$ but independent of $\Omega$. Following \cite{CNS}, we define the $k$-convex function by :
For a domain $\Omega\in \R^n$, a function $u\in C^2(\Omega)$ is called k-convex (or $k$-admissible) if the eigenvalues $\lambda(X)$ of Hessian $D^2 u$ are in $\Gamma _k$ (defined by \eqref{gk}) for all $X \in\Omega$. We note that for all $k$-convex functions $u$, the $k$-Hessian operator is elliptic, i.e., 
\begin{align*}
    \dot{\s}_k^{ij}=\frac{\partial\s_k}{\partial u_{ij}}
\end{align*}
is positive-definite. In the following text, we consider the $k$-Hessian equation in this realm.

We extend Theorem 1.5 by Chou-Wang \cite{CW} to cases where $f$ depends on $\nabla u$, under an additional convexity assumption for the function $u$. We refer to the function $u$ as semi-convex if there exists a constant $A>0$ such that the eigenvalues $\lambda$ of $D^2 u$ satisfy
\begin{align*}
 \lambda_i(D^2 u(x))\geqslant -A, \quad \text{for all } x \in \Omega
\end{align*}
for any  $1\leqslant i\leqslant n$.
We have obtained the following interior estimates for semiconvex solutions of Equation \eqref{eq1}.
\begin{thm}\label{thm sc}
Assume that $f\in C^2(\Omega\times R\times R^n)$ satisfies $f>f_0>0$ for a constant $f_0>0$ in $\overline{\Omega}$. Then, for any semi-convex $k$-admissible solution of the Dirichlet problem of the equation \eqref{eq1} provided $2<k\leqslant n-1$, we have 
\begin{align*}
    (-u)^{\a}\Delta u\leqslant C
\end{align*}
where $\a>0$ and $C>0$ depend on $n$, $k$, $f_0$, $\|f\|_{C^{1,1}}$, $\|u\|_{C^1}$ but are independent of $\Omega$.
\end{thm}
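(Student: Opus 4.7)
The plan is a Pogorelov-type maximum principle argument on an auxiliary function adapted to the semi-convex setting. I would work with
\[
W(x,\xi) \;=\; (-u(x))^{\a}\, e^{\psi(|\nabla u|^2)}\, \langle D^2 u(x)\,\xi,\xi\rangle
\]
on $\overline{\Omega}\times S^{n-1}$, with $\a>0$ large and $\psi$ a convex auxiliary function to be chosen. Because $u=0$ on $\partial\Omega$, the supremum of $W$ is attained at some interior point $x_0$ and along a unit vector $\xi$ which must be an eigenvector of $D^2u(x_0)$ for its largest eigenvalue $\lambda_1$. Rotating coordinates so that $D^2u(x_0)=\mrm{diag}(\lambda_1,\ldots,\lambda_n)$, I would study $\log W$ and exploit the two conditions $\nabla \log W(x_0)=0$ and $L(\log W)(x_0)\leq 0$, where $L=\dot{\s}_k^{ij}\partial_{ij}$ is the linearized operator of \eqref{eq1}.

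The critical point condition at $x_0$ reads
\[
\frac{u_{11i}}{u_{11}} \;=\; \a\,\frac{u_i}{(-u)} \;-\; 2\psi'(|\nabla u|^2)\, u_j u_{ji},
\]
expressing the dangerous third derivatives $u_{11i}$ through first- and second-order quantities. Differentiating $\s_k(D^2u)=f(x,u,\nabla u)$ once in $e_1$ and then twice produces
\[
\dot{\s}_k^{ij}u_{ij11} + \ddot{\s}_k^{ij,pq}u_{ij1}u_{pq1} \;=\; f_{11} + (\text{terms in } f_u,\, f_{p_\ell},\, u_1, u_{11}, u_{1\ell}).
\]
Substituting this into the expansion of $L(\log W)\leq 0$ yields an inequality whose coercive side is of the form $\a\,(-u)^{-1}u_{11}\sum_i\dot{\s}_k^{ii}u_i^2$ plus terms of order $\psi'\,u_{11}\sum_i\dot{\s}_k^{ii}\lambda_i^2$, and whose negative side carries the third-derivative contribution $-\sum_i \dot{\s}_k^{ii}(u_{11i}/u_{11})^2$ together with $-\ddot{\s}_k^{ij,pq}u_{ij1}u_{pq1}/\s_k$ and the lower-order errors generated by $f_{p_\ell}$.

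At this point the decisive ingredient is the semi-convex concavity inequality for $\s_k$ proved earlier in the paper, which for $2<k\leq n-1$ and eigenvalues in $\G_k$ satisfying $\lambda_i\geq -A$ provides a lower bound of the schematic form
\[
-\ddot{\s}_k^{ij,pq}u_{ij1}u_{pq1} \;\geq\; (1+c)\,u_{11}\sum_i \dot{\s}_k^{ii}\frac{(u_{11i})^2}{u_{11}^2} \;-\; C(A)\,u_{11}\sum_i\dot{\s}_k^{ii},
\]
with a strictly positive constant $c=c(n,k,A)$. Unlike the classical Chou-Wang inequality, this produces a genuine surplus on the third-order term, which is exactly what is required to dominate $-\sum_i\dot{\s}_k^{ii}(u_{11i}/u_{11})^2$ and to have room to absorb the new $\nabla u$-dependent errors. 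After this absorption, substituting the critical point equation for $u_{11i}/u_{11}$ reduces the inequality to an algebraic relation among $u_{11}$, $\dot{\s}_k^{ii}$, $|\nabla u|$, and $A$. Choosing $\psi$ of exponential type in $|\nabla u|^2$ with a large parameter $M=M(\|f\|_{C^{1,1}},\|u\|_{C^1},A)$ then dominates the $f_{p_\ell}$-contribution, and taking $\a$ large enough relative to $M$, $A$, and $C(A)$ closes the estimate, giving $(-u)^{\a}u_{11}\leq C$ at $x_0$, hence on all of $\Omega$.

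The main obstacle is precisely the concavity step in the third paragraph: without the semi-convexity bound the third-order quadratic form is indefinite, so the Chou-Wang argument breaks down as soon as $f$ depends on $\nabla u$. A secondary difficulty is the two-parameter balancing between $\a$ (which creates the coercive $(-u)^{-1}$ term) and the size of $\psi$ (which controls the $\nabla u$-dependence): both generate errors of comparable shape, and the choice must be made so that the gain $c$ in the concavity inequality together with the $\a$-term wins against the semi-convexity error $C(A)\sum_i\dot{\s}_k^{ii}$ and the $|\nabla\psi|^2$ errors simultaneously. Once this balance is struck, independence of $\Omega$ follows because every constant that appears is of the stated dependence.
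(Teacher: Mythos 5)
Your high-level strategy matches the paper's: a Pogorelov test function $(-u)^{\a}\,e^{(\text{first-order weight})}\,u_{\xi\xi}$, maximum principle, and the semi-convex concavity lemma as the decisive ingredient. (The paper's weight is $\frac{L}{2}|Du|^2$ rather than $e^{\psi(|\nabla u|^2)}$, but that is cosmetic.) The gap is in the central technical step: the concavity inequality you invoke is not the one Lemma~\ref{iq sc} supplies, and you omit the machinery that is needed to reduce the second-order test to a form where that lemma applies.

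First, the schematic form you quote,
\begin{equation*}
-\ddot{\s}_k^{ij,pq}u_{ij1}u_{pq1} \;\geq\; (1+c)\,u_{11}\sum_i \dot{\s}_k^{ii}\frac{(u_{11i})^2}{u_{11}^2} - C(A)\,u_{11}\sum_i\dot{\s}_k^{ii},
\end{equation*}
is not what the lemma gives. Lemma~\ref{iq sc} with $\xi_i=u_{ii1}$ produces a surplus only in the $u_{111}$ direction, namely a right-hand side $(1+\d_0)\dot{\s}_k^{11}u_{111}^2/(\lambda_1\s_k)$, and its left-hand side carries two extra structural terms you cannot drop: a penalty term $K(\sum_i\dot{\s}_k^{ii}u_{ii1})^2/\s_k^2$ (handled by differentiating the equation, since $\sum_i\dot{\s}_k^{ii}u_{ii1}=\nabla_1 f$), and a \emph{positive resource} $2\sum_{i>1}\dot{\s}_k^{ii}u_{ii1}^2/((\lambda_1+A+1)\s_k)$ that must be supplied from the geometry before the lemma can be applied. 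Your version has neither of these, and a statement that simultaneously dominates all of $\sum_i\dot{\s}_k^{ii}(u_{11i}/u_{11})^2$ by the diagonal concavity form is stronger than what is available or needed.

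Second, the provenance of that positive resource is exactly what the paper's extra ingredients are for, and the proposal is silent on them. The paper does not work with the bare eigenvalue $u_{\xi\xi}$ but with the perturbed vector field $v(s)$ (following Chu), which injects the commutation term $\sum_{l>1}\frac{u_{1li}^2}{u_{11}(u_{11}-u_{ll}+1)}$ into the Hessian test. After applying Andrews' inequality to split the quadratic form $\ddot{F}^{pq,rs}u_{pq1}u_{rs1}$ into a diagonal piece $\ddot{F}^{pp,qq}u_{pp1}u_{qq1}$ plus off-diagonal pieces involving $u_{11p}^2$, the off-diagonal pieces cancel against part of the commutation term, and the remaining commutation term becomes, via the semi-convexity bound $u_{11}-u_{ii}+1\leq u_{11}+A+1$, exactly the $2\sum_{i>1}\dot{F}^{ii}u_{1ii}^2/(u_{11}(u_{11}+A+1))$ term the lemma requires. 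Without Andrews' inequality, the eigenvector perturbation, and the semi-convexity substitution, your reduced inequality never takes the shape to which Lemma~\ref{iq sc} applies, so the third-order quadratic form remains uncontrolled and the estimate does not close. The two-parameter balancing you describe in the final paragraph is correct in spirit but secondary; the missing step is this organization of third-order terms.
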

\begin{rem}
    The interior estimates in Theorem \ref{thm sc} were derived by Li-Ren-Wang \cite{LRW} under the assumption that $u$ is in the $(k+1)$-convex cone. Tu proved the interior estimates in Theorem \ref{thm sc} using a test function that involved $|X|^2$ in \cite{Tu}. Both of the results mentioned above depend on the diameter of the domain $\Omega$. In the proof of Theorem \ref{thm sc}, we continue with the method in \cite{CW} by Chou-Wang and have obtained estimates independent of $\Omega$. 
\end{rem}
Using this class of interior estimates, we further derive a rigidity theorem for entire solutions in $\R^n$ to
\begin{equation}\label{sk, rig}
    \s_k(D^2 u)=1.
\end{equation}
It has long been a question of concern whether an entire function $u$ in $n$-dimensional Euclidean space satisfying \eqref{sk, rig} is quadratic. For $k=1$, the Liouville property of harmonic functions implies rigidity results for solutions with bounded Hessian. For $k=n$, the Jörgens–Calabi–Pogorelov theorem confirms the rigidity results for all convex entire solutions. Cheng and Yau \cite{CYau} provided an alternative geometric proof.
 For $k=2$, Chang and Yuan \cite{CY} proved that the entire solution to $2$-Hessian equation \eqref{sk, rig} in $\R^n$ is quadratic if
\begin{align*}
    D^2 u\geqslant \d-\sqrt{\frac{2}{n(n-1)}}I
\end{align*}
for any $\d>0$. For $2\leqslant k\leqslant n-1$, the authors also guess that the rigidity theorem holds true under the semi-convexity assumption
\begin{align*}
D^2 u\geqslant -AI
\end{align*}
    with arbitrarily large constant $A>0$.  Warren \cite{War} constructed a class of nontrivial solutions to \eqref{sk, rig}, which are neither semiconvex nor exhibit quadratic growth. Shankar and Yuan \cite{SY} obtained the rigidity theorem for $k=2$ under the semi-convexity assumption. For a general $k$, the rigidity problem for the $k$-Hessian equation remains open.
A function $u$ is said to have quadratic growth if there exists constants $b,c>0$ and sufficient large $R>0$, such that the following inequality holds: 
\begin{align}
u(x)\geqslant c|x|^2-b, \quad \text{for } \ |x|\geqslant R. 
\end{align}
In this paper, we confirm Chang-Yuan's conjecture under the quadratic growth condition for general $k$.
\begin{thm}\label{thm rig}
Given $2<k\leqslant n-1$, let $u$ be any semi-convex entire function in $\R^n$ with $\lambda(D^2 u)\in \Gamma_k$ that satisfies
\begin{align}
    \s_k(D^2u)=1
\end{align} 
and exhibits quadratic growth. Then $u$ is a quadratic polynomial.
\end{thm}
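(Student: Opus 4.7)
My plan is to reduce the rigidity question to a Liouville-type statement for concave uniformly elliptic equations by first using Theorem~\ref{thm sc} to show that $D^2 u$ is uniformly bounded on $\R^n$. For each $R$ larger than $u(0)$, let $\Omega_R=\{x\in\R^n : u(x)<R\}$; by quadratic growth, $\Omega_R\subset B_{\sqrt{(R+b)/c}}$, so its diameter is $O(\sqrt{R})$. I then rescale by setting $\tilde u_R(y):=R^{-1}u(\sqrt{R}\,y)$ on $\tilde\Omega_R:=R^{-1/2}\Omega_R$. This rescaling preserves the equation $\sigma_k(D^2\tilde u_R)=1$ and the semi-convexity constant $A$, while the growth hypothesis becomes $\tilde u_R(y)\ge c|y|^2-b/R$; hence $\tilde\Omega_R\subset B_{R_0}$ uniformly in $R$ and $-1\le\tilde u_R\le 1$, so the $C^0$ part of $\|\tilde u_R\|_{C^1(\tilde\Omega_R)}$ is uniformly controlled.

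The crux of the plan is to obtain a matching uniform bound on $\|\nabla\tilde u_R\|_{L^\infty(\tilde\Omega_R)}$, equivalently a linear gradient bound $|\nabla u(x)|\le C(1+|x|)$ on $\R^n$. My strategy is first to derive a quadratic upper bound $u(x)\le C(1+|x|^2)$ by combining semi-convexity (so $w:=u+A|x|^2/2$ is convex) with a comparison argument against the explicit quadratic solution $P(x)=\binom{n}{k}^{-1/k}|x|^2/2$ of $\sigma_k(D^2 P)=1$; the linear gradient bound then follows from the semi-convexity inequality $u(x+t\xi)\ge u(x)+t\,\partial_\xi u(x)-At^2/2$ by optimizing in $t$. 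This is the main obstacle, since semi-convexity together with a quadratic lower bound alone is compatible with super-quadratic growth, and the equation $\sigma_k=1$ together with Maclaurin's inequality $\sigma_1\ge c(n,k)$ must be used essentially. Granting the $C^1$ bound, I apply Theorem~\ref{thm sc} to $\tilde v_R:=\tilde u_R-1$ on $\tilde\Omega_R$ (here $f\equiv 1$ trivially satisfies the hypotheses) to get
\begin{equation*}
(1-\tilde u_R(y))^{\alpha}\,\Delta\tilde u_R(y)\le C
\end{equation*}
with $\alpha,C$ independent of $R$. Evaluating at $y=R^{-1/2}x_0$ for fixed $x_0$ and letting $R\to\infty$ gives $\Delta u(x_0)\le C$; combined with $\lambda_i(D^2 u)\ge -A$, this bounds every eigenvalue of $D^2 u$ on $\R^n$.

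Once $D^2 u$ is globally bounded, $u$ satisfies the concave uniformly elliptic equation $\sigma_k^{1/k}(D^2 u)=1$, with ellipticity constants controlled via Maclaurin (which forces all $\sigma_j(D^2 u)\ge\delta_j>0$ for $j\le k$, keeping $D^2 u$ in a fixed compact subset of $\Gamma_k$). The Evans--Krylov interior $C^{2,\alpha}$ estimate on $B_R(0)$ then yields
\begin{equation*}
[D^2 u]_{C^\alpha(B_{R/2})}\le C R^{-\alpha}\,\|D^2 u\|_{L^\infty(B_R)}\le C'R^{-\alpha},
\end{equation*}
and letting $R\to\infty$ forces $D^2 u$ to be constant, so $u$ is a quadratic polynomial. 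Among the four ingredients---rescaling, the linear gradient bound, the interior estimate of Theorem~\ref{thm sc}, and the concave-elliptic Liouville argument---the linear gradient bound for semi-convex solutions of $\sigma_k=1$ is the crucial and most delicate step.
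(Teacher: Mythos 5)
Your rescaling, $C^0$ control, and Evans--Krylov endgame all match the paper's proof, but the middle step diverges, and that divergence opens a genuine gap. You route the argument through Theorem~\ref{thm sc}, whose constants depend on $\|u\|_{C^1}$, and therefore you must first establish a uniform linear gradient bound $|\nabla u(x)|\le C(1+|x|)$. The paper deliberately does \emph{not} invoke Theorem~\ref{thm sc} here: it proves a separate interior estimate (Lemma~\ref{est, liv}) tailored to the Dirichlet problem $\sigma_k(D^2u)=1$, $u=0$ on $\partial\Omega$, using the test function $\alpha\log(-u)+\log u_{\xi\xi}+\tfrac12|x|^2$ (with $|x|^2$ in place of $|Du|^2$). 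In that proof the critical equation lets one trade $u_i/u$ for $u_{11i}/u_{11}+x_i$, the second-derivative term $\dot F^{ij}\delta_{ij}=(n-k+1)\sigma_{k-1}$ supplies the good positive quantity, and the comparison barrier $a|x|^2-b$ controls $-u$ by the diameter. Consequently the constants depend only on $n$, $k$, $A$ and $\mathrm{diam}(\Omega)$, which is uniformly controlled after rescaling by the quadratic growth assumption, and no gradient bound is needed at all.

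Your sketch of the gradient bound does not close the gap. The comparison of $u-R$ against the model quadratic $P$ on the sublevel set $\Omega_R$ runs in the wrong direction: since $u-R=0\ge P-\text{const}$ on $\partial\Omega_R$, the comparison principle yields a \emph{lower} bound on $u$ inside $\Omega_R$, not the desired quadratic \emph{upper} bound $u\le C_1|x|^2+C_2$. The semi-convexity optimization $|\nabla u(x)|^2\le 2A\bigl(u(x+t\xi)-u(x)\bigr)|_{t=|\nabla u(x)|/A}$ then requires precisely this upper bound, and unwinding the resulting quadratic inequality for $|\nabla u(x)|$ requires $A>2C_1$, a relation between the semi-convexity constant and the (as yet unproved) upper-growth constant that is not available a priori. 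There is also a circularity hazard: a quadratic upper bound is an immediate consequence of the Hessian bound you are trying to prove. Unless you can establish the upper growth bound independently (and with constants depending only on $n$, $k$, $A$, $c$, $b$), the argument is not complete as written. The cleanest fix is to follow the paper and prove a diameter-dependent Pogorelov estimate for the pure equation $\sigma_k=1$ via the $|x|^2$ test function, which makes the gradient bound unnecessary.
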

 Assuming a quadratic growth condition, Bao-Chen-Guan-Ji \cite{BCGJ} showed that strictly convex entire solutions to \eqref{sk, rig} are quadratic, Li-Ren-Wang \cite{LRW} extended the convexity assumption to $(k+1)$-convexity,  and Chu-Dinew \cite{CD} further relaxed the condition to
    \begin{align}
        \s_{k+1}>-A
    \end{align}
    for some $A>0$ (see also \cite{Tu}).
In the following text, we note that using the lemma \ref{sk+1 -A} if $\s_{k+1}>-A$, then $\lambda_i>-C(n,k,\s_k,A)$, which means that Theorem \ref{thm rig} implies all these results.\\

To establish the curvature estimates and derive the rigidity theorem, the main difficulty to overcome is how to deal with the third-order derivatives. It is well known that $\s_k^{\frac{1}{k}}$ is concave in the cone $\Gamma_k$, which is very useful to deal with these terms. It should be noted that Ren and Wang \cite{RW1} established the concavity inequality of the Hessian operator for $k=n-1$. They proved that 
\begin{thm} \label{RW n-1}\cite{RW1}
Given two positive constants $\d<1$, $\e$, for any index $i$, if $\k_i\geqslant \d\k_1$ and the positive constant $K$ depending only on $\d$ and $\e$ is sufficient large, then we have
\begin{equation}\label{e1.1}
\k_i[K(\sigma_{n-1})_i^2-\sigma_{n-1}^{pp,qq}h_{ppi}h_{qqi}]-\sigma^{ii}_{n-1}h_{iii}^2+(1+\varepsilon)\sum_{j\neq i}\sigma_{n-1}^{jj}h_{jji}^2\geqslant 0.
\end{equation}
\end{thm}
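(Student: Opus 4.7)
The plan is to split the cross-term $\sigma_{n-1}^{pp,qq}h_{ppi}h_{qqi}$ into blocks according to whether the fixed index $i$ appears, dispose of each block by Cauchy--Schwarz so that its remnants are absorbed by the positive sum $(1+\varepsilon)\sum_{j\neq i}\sigma_{n-1}^{jj}h_{jji}^2$, and bury any surviving multiple of $h_{iii}^2$ inside the dominating term $\kappa_i K(\sigma_{n-1})_i^2$ by taking $K$ large. The main algebraic input is the elementary-symmetric recursion
$$
\sigma_{n-1}^{qq}=\kappa_p\sigma_{n-1}^{pp,qq}+\sigma_{n-2}(\kappa|p,q),\qquad p\neq q,
$$
which follows from $\sigma_{n-1}^{qq}=\sigma_{n-2}(\kappa|q)$ and $\sigma_{n-1}^{pp,qq}=\sigma_{n-3}(\kappa|p,q)$. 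It lets me trade each $\kappa_p\sigma_{n-1}^{pp,qq}$ for $\sigma_{n-1}^{qq}$ modulo a remainder $\sigma_{n-2}(\kappa|p,q)$ whose sign I then control by a short case analysis on the position of any negative eigenvalue inside $\Gamma_{n-1}$.

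For the generic block $p\neq q$ with both distinct from $i$, applying $2|h_{ppi}h_{qqi}|\leqslant h_{ppi}^2+h_{qqi}^2$ to $-\kappa_i\sigma_{n-1}^{pp,qq}h_{ppi}h_{qqi}$ and then using the recursion packages this portion into the form $\sum_{j\neq i}c_j\sigma_{n-1}^{jj}h_{jji}^2$ with coefficients $c_j\leqslant 1+\varepsilon/2$, well inside the slack provided by the $(1+\varepsilon)$ on the right. The block where $p=i$ or $q=i$ yields $-2\kappa_i\sum_{q\neq i}\sigma_{n-1}^{ii,qq}h_{iii}h_{qqi}$, the only place where $h_{iii}$ appears in the expansion. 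A weighted Cauchy--Schwarz
$$
-2\kappa_i\sigma_{n-1}^{ii,qq}h_{iii}h_{qqi}\leqslant \eta\,\sigma_{n-1}^{qq}h_{qqi}^2+\frac{\kappa_i^2(\sigma_{n-1}^{ii,qq})^2}{\eta\,\sigma_{n-1}^{qq}}h_{iii}^2
$$
with small $\eta$ sends the first summand into the good sum and, after using $\kappa_i\sigma_{n-1}^{ii,qq}\leqslant \sigma_{n-1}^{qq}$ together with the hypothesis $\kappa_i\geqslant \delta\kappa_1$ to compare $\sigma_{n-1}^{ii}$ with $\sigma_{n-1}^{qq}$ uniformly, leaves a residue of the form $C(\delta,\varepsilon)\sigma_{n-1}^{ii}h_{iii}^2$. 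Combined with the native $-\sigma_{n-1}^{ii}h_{iii}^2$, a positive multiple of $\sigma_{n-1}^{ii}h_{iii}^2$ remains on the wrong side of the inequality.

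To finish, I would expand $\kappa_i K(\sigma_{n-1})_i^2=\kappa_i K\bigl(\sum_p\sigma_{n-1}^{pp}h_{ppi}\bigr)^2$ and peel off the $(\sigma_{n-1}^{ii})^2 h_{iii}^2$ summand by one more Cauchy--Schwarz, depositing the mixed pieces back into the good sum with additional $O(1/K)$ contributions. Because $\kappa_i\geqslant \delta\kappa_1$ forces $\kappa_i\sigma_{n-1}^{ii}\geqslant c(\delta)>0$, the coefficient $K\kappa_i\sigma_{n-1}^{ii}$ in front of $\sigma_{n-1}^{ii}h_{iii}^2$ can be made to exceed the residue by choosing $K=K(\delta,\varepsilon)$ sufficiently large, establishing \eqref{e1.1}.

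The main obstacle is the simultaneous bookkeeping of all constants: the coefficient of each $\sigma_{n-1}^{jj}h_{jji}^2$ must remain below $1+\varepsilon$ while the cross-terms inside $(\sigma_{n-1})_i^2$ (which can individually be negative) must not erase the quadratic lower bound in $h_{iii}$. The hypothesis $\kappa_i\geqslant \delta\kappa_1$ is used precisely where the ratios $\sigma_{n-1}^{ii}/\sigma_{n-1}^{jj}$ need to be controlled uniformly in $\delta$, and the sign analysis of $\sigma_{n-2}(\kappa|p,q)$ is where the $\Gamma_{n-1}$-cone hypothesis enters the argument.
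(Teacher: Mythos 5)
The theorem you are proving is quoted in the paper from Ren--Wang \cite{RW1} and is not proved there, so there is no in-paper proof to compare against; I can only assess your plan on its own terms, and it has two genuine gaps that make the approach unsalvageable in its current form.

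First, the off-diagonal Cauchy--Schwarz step is far too lossy. You apply $2|h_{ppi}h_{qqi}|\leqslant h_{ppi}^2+h_{qqi}^2$ term by term over all pairs $p\neq q$ with $p,q\neq i$, and claim the result repackages as $\sum_{j\neq i}c_j\sigma_{n-1}^{jj}h_{jji}^2$ with $c_j\leqslant 1+\varepsilon/2$. But the coefficient you actually obtain is
$c_j=\kappa_i\sum_{q\neq j,i}\sigma_{n-1}^{jj,qq}/\sigma_{n-1}^{jj}$,
and this is of order $n$, not $1+\varepsilon/2$. Taking $\kappa=(1,\dots,1)$ (so that the hypothesis $\kappa_i\geqslant\delta\kappa_1$ is trivially satisfied for every $\delta<1$), one has $\sigma_{n-1}^{jj}=n-1$, $\sigma_{n-1}^{jj,qq}=\binom{n-2}{n-3}=n-2$, and hence $c_j=(n-2)^2/(n-1)\sim n$. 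Treating each cross-term $-\kappa_i\sigma_{n-1}^{pp,qq}h_{ppi}h_{qqi}$ individually destroys the cancellation coming from the concavity of $\log\sigma_{n-1}$; it is precisely this cancellation (reflected in the sign structure of the full quadratic form, not of its entries) that the inequality \eqref{e1.1} exploits, and a blockwise Cauchy--Schwarz cannot recover it.

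Second, the final step of ``peeling off $(\sigma_{n-1}^{ii})^2h_{iii}^2$ from $K(\sigma_{n-1})_i^2$'' cannot produce the needed lower bound on $h_{iii}^2$. The quantity $(\sigma_{n-1})_i=\sum_p\sigma_{n-1}^{pp}h_{ppi}$ is a signed sum; choosing $h_{ppi}$ for $p\neq i$ so that $\sum_{p\neq i}\sigma_{n-1}^{pp}h_{ppi}=-\sigma_{n-1}^{ii}h_{iii}$ makes $(\sigma_{n-1})_i=0$ while $h_{iii}$ remains arbitrary. In that regime the $K$-term contributes nothing, no matter how large $K$ is, and one must establish the bare inequality $-\kappa_i\sigma_{n-1}^{pp,qq}h_{ppi}h_{qqi}-\sigma_{n-1}^{ii}h_{iii}^2+(1+\varepsilon)\sum_{j\neq i}\sigma_{n-1}^{jj}h_{jji}^2\geqslant 0$ on the codimension-one slice $(\sigma_{n-1})_i=0$ directly. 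Your argument offers nothing in this case. The actual Ren--Wang proof is a lengthy and delicate case analysis built around identities specific to $k=n-1$ (where each $\sigma_{n-1}^{pp}$ is a single product up to lower-order corrections), not a coarse completion-of-squares; nothing short of exploiting that structure will close the gap.

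As a smaller point, your recursion $\sigma_{n-1}^{qq}=\kappa_p\sigma_{n-1}^{pp,qq}+\sigma_{n-2}(\kappa|p,q)$ is correct, but the quantity you actually need to tame is $\kappa_i\sigma_{n-1}^{pp,qq}$ with $i\notin\{p,q\}$, and the hypothesis $\kappa_i\geqslant\delta\kappa_1$ gives no upper bound on $\kappa_i/\kappa_p$ when $\kappa_p$ is small or negative (which can happen in $\Gamma_{n-1}$), so the trade you describe does not go through in the direction you need.
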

 In \cite{RW1}, they also pointed out that the inequality \eqref{e1.1} is not true even if $i=1$, $\s_k=1$ and $k_1$ are large enough for $k<n-1$. In addition, Guan and Qiu \cite{GQ} established the interior $C^2$ estimates of the $\sigma_2$-Hessian equation under the assumption $\s_3\geqslant -A$ for some constant $A>0$. In their work, it's crucial to observe that under the condition $\s_3\geqslant -A$ and $\k_1$ large enough
\begin{align}\label{Chen}
-\sum_{i\neq j}u_{ii1}u_{jj1}+C\frac{(\dot{\s}_2^{ii}u_{ii1})^2}{\s_2}\geqslant(1+\d_0)\frac{\dot{\s}_2^{11}u_{111}^{2}}{u_{11}}
\end{align}
for some constant $C>0$ and $\d_0>0$. \eqref{Chen} is basically the similar form of \eqref{e1.1} and is derived by using Chen's \cite{Chen} work on the optimal concavity of the $\s_2$ operators with the eigenvector decomposition method. We note that the method makes sense only for $k=2$ since the Hessian operator is a known matrix. It is difficult to generalize this method to other cases. Moreover, it reminds us that the concavity inequality of the general $k$-Hessian operator may also hold under some additional convexity assumption. we develop a method based on \cite{GRW} by Guan-Ren-Wang and \cite{HZ} by Hong-Zhang which is grounded in the crucial observation of the concavity of $q_k=\frac{\s_k}{\s_{k-1}}$, introduced by Huisken-Sinestrari in \cite{HS}. Under the semi-convexity condition, we derive a concavity inequality which plays a crucial role in establishing curvature estimates of the Dirichlet problem of $k$-Hessian equations \eqref{eq1} and \eqref{eq rig}.
\begin{lem}\label{iq sc}
   Assume $\lbrace\lambda_i\rbrace\in\Gamma_k,$ $\lambda_1\geqslant\lambda_2\cdots\geqslant \lambda_n,$ and $\lambda_n>-A$ for a constant $A>0$. Then there exists some constant $C>0$ depending on $n$, $k$, $\s_k$, $A$ and small constant $\d_0>0$ depending on $k$ such that if $\lambda_1\geqslant C$, then the following inequality holds
    \begin{align}\label{main,iq}
        -\sum_{p\neq q}\frac{\s_k^{pp,qq}\xi_p\xi_q}{\s_k}+K\frac{(\sum_i\s_k^{ii}\xi_i)^2}{\s_k^2}+2\sum_{i>1}\frac{\s_k^{ii}\xi_i^2}{ (\lambda_1+A+1)\s_k}\geqslant (1+\d_0)\frac{\dot{\s}_k^{11} \xi_1^2}{\lambda_1\s_k}
    \end{align}
    for some sufficient large $K>0$ (depending on $\d_0$) where $\xi=( \xi_1,\cdots,\xi_n)$ is an arbitrary vector in $\mathbb{R}^n$.
\end{lem}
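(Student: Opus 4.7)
The plan is to exploit the Huisken--Sinestrari concavity of $q_k = \s_k/\s_{k-1}$ on $\Gamma_k$ as the central engine: this concavity ties the Hessian entries of $\s_k$ (in eigenvalue coordinates) to those of $\s_{k-1}$, which naturally suggests an inductive reduction of the desired inequality from level $k$ to level $k-1$, bottoming out at $k=2$ where the Chen-type optimal concavity \eqref{Chen} already supplies the estimate. Direct differentiation of $q_k$ in the eigenvalue representation gives
\begin{align*}
\frac{\partial^2 q_k}{\partial\lambda_p\partial\lambda_q} = \frac{\s_k^{pp,qq}}{\s_{k-1}} - \frac{\s_k^{pp}\s_{k-1}^{qq}+\s_k^{qq}\s_{k-1}^{pp}}{\s_{k-1}^2} - \frac{\s_k \s_{k-1}^{pp,qq}}{\s_{k-1}^2} + \frac{2\s_k \s_{k-1}^{pp}\s_{k-1}^{qq}}{\s_{k-1}^3},
\end{align*}
and contracting with $\xi_p\xi_q$, using $\sum_{p,q}(\partial^2 q_k/\partial\lambda_p\partial\lambda_q)\xi_p\xi_q \leqslant 0$, then multiplying by $\s_{k-1}/\s_k$ yields
\begin{align*}
-\sum_{p\neq q}\frac{\s_k^{pp,qq}\xi_p\xi_q}{\s_k} + \frac{2(\sum_p \s_k^{pp}\xi_p)(\sum_p\s_{k-1}^{pp}\xi_p)}{\s_k\s_{k-1}} \geqslant -\sum_{p\neq q}\frac{\s_{k-1}^{pp,qq}\xi_p\xi_q}{\s_{k-1}} + \frac{2(\sum_p\s_{k-1}^{pp}\xi_p)^2}{\s_{k-1}^2}.
\end{align*}

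The cross term on the left is controlled by AM--GM, absorbing it into $K(\sum\s_k^{pp}\xi_p)^2/\s_k^2$ (at the cost of enlarging $K$) while eating a small multiple of $(\sum\s_{k-1}^{pp}\xi_p)^2/\s_{k-1}^2$ from the right. The resulting inequality has the same structural shape as the target bound but one level lower, so iterating the reduction from $k$ down to $k=2$ and invoking \eqref{Chen} as the base case should close the argument. The semi-convexity enters at each level through the pointwise fact $\s_\ell^{ii}(\lambda_i+A+1)\geqslant 0$: this allows the LHS term $2\sum_{i>1}\s_k^{ii}\xi_i^2/[(\lambda_1+A+1)\s_k]$ to soak up the off-diagonal negative contributions arising from indices where $\lambda_i<0$, and the denominator $\lambda_1+A+1$ reflects the natural scale set by the dominant eigenvalue $\lambda_1$.

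To match the right-hand side $(1+\d_0)\s_k^{11}\xi_1^2/(\lambda_1\s_k)$, I would use the Newton--MacLaurin-type relation $\s_k^{11}/\s_k\leqslant \s_{k-1}^{11}/\s_{k-1}$ in conjunction with the expansion $\s_k = \lambda_1\s_k^{11} + \s_k(\lambda|1)$, so that when $\lambda_1\geqslant C$ is sufficiently large, the $(1,1)$-diagonal contribution at level $k-1$ transfers to the desired one at level $k$ with $\d_0$ shrunk by a controllable constant factor. The main obstacle is the bookkeeping of constants across the $k-2$ iterative reductions: one must ensure $\d_0>0$ persists through every step without collapsing, that the threshold $\lambda_1\geqslant C(n,k,\s_k,A)$ is large enough to dominate every error term accumulated along the way, and that the coefficient $2$ (together with the denominator $\lambda_1+A+1$) in the semi-convex LHS term survives each reduction without degrading.
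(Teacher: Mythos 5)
Your reduction scheme starts from the same algebraic input as the paper (the concavity of $q_k=\s_k/\s_{k-1}$ and the identity in Lemma \ref{qk,con}), but the iterative strategy you propose does not close, and it misses the actual hard part of the lemma. Three concrete problems.

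\textbf{The iteration is not self-sustaining.} After one application of $\partial^2_\xi q_k\le 0$ and AM--GM you would arrive at
\begin{align*}
-\sum_{p\neq q}\frac{\s_k^{pp,qq}\xi_p\xi_q}{\s_k}+K\frac{\bigl(\sum_i\s_k^{ii}\xi_i\bigr)^2}{\s_k^2}\ \geqslant\ -\sum_{p\neq q}\frac{\s_{k-1}^{pp,qq}\xi_p\xi_q}{\s_{k-1}}+(2-\e)\frac{\bigl(\sum_i\s_{k-1}^{ii}\xi_i\bigr)^2}{\s_{k-1}^2},
\end{align*}
with $(2-\e)$ bounded regardless of $K$. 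But the level-$(k-1)$ analogue of the lemma (and of the Chen base case \eqref{Chen}) requires an \emph{arbitrarily large} constant in front of $\bigl(\sum\s_{k-1}^{ii}\xi_i\bigr)^2/\s_{k-1}^2$, not a fixed one near $2$. So the output of one step is never large enough to power the next step, and the chain to $k=2$ breaks after the first reduction. Moreover, if you try to invoke a level-$(k-1)$ version of the lemma as a black box, its threshold constant depends on $\s_{k-1}$; by Lemma \ref{sk-1}, $\s_{k-1}\gtrsim\lambda_1^{1/(k-1)}\to\infty$ exactly in the regime $\lambda_1\to\infty$ that you care about, so the inductive hypothesis is never usable at the relevant scale.

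\textbf{The target is strictly stronger than what your reduction produces.} The right-hand side of \eqref{main,iq} is $(1+\d_0)\dot{\s}_k^{11}\xi_1^2/(\lambda_1\s_k)$, and the paper itself emphasises (in the discussion after Lemma \ref{Lu}) that in general $\dot{\s}_k^{11}\gg\s_k/\lambda_1$, so this is genuinely larger than the $(1-\e)\xi_1^2/\lambda_1^2$ of Lu's Lemma \ref{Lu}. Your reduction together with the base case would only yield something of the weaker $\xi_1^2/\lambda_1^2$ type, unless you explain how to convert $\dot{\s}_{k-1}^{11}/(\lambda_1\s_{k-1})$ into $\dot{\s}_k^{11}/(\lambda_1\s_k)$. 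The relation $\dot{\s}_k^{11}/\s_k\le\dot{\s}_{k-1}^{11}/\s_{k-1}$ you cite in fact points the wrong way: the hard regime is precisely when $\s_k(\lambda|1)<-c_0\s_k$, in which case the paper's Claim 3 gives $\dot{\s}_k^{11}\sim\lambda_2\cdots\lambda_k^2/\lambda_1$, and attaining this requires the explicit decomposition of $\xi$ into a radial piece, a piece along $e_1$, and a piece $\gamma$ with $(\gamma_k,\dots,\gamma_n)\perp(\lambda_k,\dots,\lambda_n)$, fed through the quantitative Huisken--Sinestrari lower bound $-\s_{k-1}\partial^2_\gamma q_k\ge C\sum_j\lambda_1\cdots\hat\lambda_j\cdots\lambda_{k-1}\bigl|[\gamma]_{1,\dots,\hat j,\dots,k-1}^\perp\bigr|^2$ of Lemma \ref{decom}. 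Using merely the sign $\partial^2_\xi q_k\le 0$ discards exactly the information needed for this case.

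\textbf{The paper's argument does not iterate to $k=2$.} Rather than peeling off one quotient at a time, the proof works at level $k$ throughout: it expands $\log\s_k=\log q_k+\cdots+\log q_2+\log\s_1$, extracts the coefficient $\bigl(1+(1-\e)/k\bigr)\bigl(\dot{\s}_{k-1}^{11}\xi_1/\s_{k-1}\bigr)^2$ by Cauchy--Schwarz in \eqref{xi1}, estimates all cross terms using Lemmas \ref{lemn} and \ref{upperboundforkappak}, and then splits into the easy case $\s_k(\lambda|1)\ge -c_0\s_k$ (settled by \eqref{iq c0}) and the hard case $\s_k(\lambda|1)<-c_0\s_k$ (Claims 1--3 plus the $\gamma$-decomposition). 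Your proposal contains no analogue of this dichotomy, and the "bookkeeping of constants" you defer is exactly where the real content lies.
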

The lemma \ref{sk+1 -A} implies that if $\s_{k+1}>-A,$ then $\lambda_i>-C(n,k,\s_k,A)$. In other words, the semiconvexity of $\lbrace\lambda_i\rbrace$ is a weaker condition, which means that Lemma \ref{iq sc} is useful in deriving Guan-Qiu's \cite{GQ} interior curvature estimates. In addition, Lemma \ref{iq sc} removes the additional assumption that $|\k_n|\geqslant \d_0$ for some constant $\d_0>0$ from Hong and Zhang \cite{HZ} employing a simplified and concise approach.

Using the proof in Lemma \ref{iq sc}, we also improve Lemma 3.1 from Lu \cite{Lu2} and present it as Lemma \ref{lem imp}. In \cite{Lu2}, the concavity inequality, Lemma 3.1, plays a crucial role in obtaining curvature estimates for admissible solutions to the problem of a prescribed
curvature measure type in a hyperbolic space (referencing \cite{Yang}). We present Lemma 3.1 in \cite{Lu2} here for ease of comparison.
\begin{lem}[\cite{Lu2}]\label{Lu}
Let $\lambda=(\lambda_1,\cdots,\lambda_n)\in \Gamma_k$ with $\lambda_1\geqslant\cdots\geqslant\lambda_n$ and let $1\leqslant l<k$. For any $\epsilon,\delta,\delta_0\in (0,1)$, there exists a constant $\delta^\prime>0$ depending only on $\epsilon,\delta,\delta_0,n,k$ and $l$ such that if $\lambda_l\geqslant \delta \lambda_1$ and $\lambda_{l+1}\leqslant \delta^\prime\lambda_1$, then we have
\begin{align*}
-\sum_{p\neq q}\frac{\sigma_k^{pp,qq}\xi_p\xi_q}{\sigma_k}+\frac{\left( \sum_i \sigma_k^{ii}\xi_i \right)^2}{\sigma_k^2}\geqslant (1-\epsilon)\frac{\xi_1^2}{\lambda_1^2}-\delta_0\sum_{i>l}\frac{\sigma_k^{ii}\xi_i^2}{\lambda_1 \sigma_k},
\end{align*}
where $\xi=(\xi_1,\cdots,\xi_n)$ is an arbitrary vector in $\mathbb{R}^n$.
\end{lem}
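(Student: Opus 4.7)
The plan is to exploit the spectral gap $\lambda_l \geqslant \delta \lambda_1$ versus $\lambda_{l+1} \leqslant \delta' \lambda_1$ by partitioning the indices into the ``large'' block $L = \{1, \ldots, l\}$ and the ``small'' block $S = \{l+1, \ldots, n\}$, and then performing an asymptotic expansion of $\sigma_k$ and its first and second partials as $\delta' \to 0$. Using the identity $\sigma_k(\lambda) = \sum_{a+b=k}\sigma_a(\lambda_L)\sigma_b(\lambda_S)$ together with the analogous index-splittings of $\sigma_{k-1}$ and $\sigma_{k-2}$, one reads off the leading behaviour
\[
\sigma_k \approx \lambda_1 \cdots \lambda_l \cdot \sigma_{k-l}(\lambda_S), \quad \frac{\sigma_k^{pp}}{\sigma_k} \approx \frac{1}{\lambda_p}\ (p \in L), \quad \frac{\sigma_k^{pp,qq}}{\sigma_k} \approx \frac{1}{\lambda_p\lambda_q}\ (p,q \in L),
\]
with corrections of relative size $O(\delta')$; in the cross case $p \in L, q \in S$ one finds $\sigma_k^{pp,qq}/\sigma_k \approx (1/\lambda_p)(\sigma_k^{qq}/\sigma_k)$.

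Next I would split the double sum $-\sum_{p \neq q}\sigma_k^{pp,qq}\xi_p\xi_q/\sigma_k$ into three blocks $(LL), (LS), (SS)$ and expand the square $(\sum_i \sigma_k^{ii}\xi_i/\sigma_k)^2 = (A+B)^2$ with $A := \sum_{p\in L}\xi_p/\lambda_p$ and $B := \sum_{q\in S}\sigma_k^{qq}\xi_q/\sigma_k$. At leading order the contributions telescope in a clean way:
\[
(LL) + A^2 \approx \sum_{p \in L}\frac{\xi_p^2}{\lambda_p^2} \geqslant \frac{\xi_1^2}{\lambda_1^2}, \qquad (LS) + 2AB \approx 0, \qquad (SS) + B^2 \geqslant 0,
\]
the last bound following from concavity of $\sigma_{k-l}^{1/(k-l)}$ applied to the $S$-block (with a direct Newton--Maclaurin estimate when $k - l = 1$). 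Thus in the limit $\delta' \to 0$ the LHS already dominates $\xi_1^2/\lambda_1^2$, which is stronger than the target inequality.

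The remaining work is to absorb the $O(\delta')$ errors into the two slack terms on the right. Errors whose weight sits on the $L$-block come with an explicit factor $1/\lambda_1^2$ and shave an $\epsilon$ off the coefficient of $\xi_1^2/\lambda_1^2$ after using $\lambda_p \geqslant \delta \lambda_1$ for $p \in L$. Errors whose weight sits on the $S$-block are generated by replacing one $L$-eigenvalue in the derivative expansion by an $S$-eigenvalue, which produces an extra factor $1/\lambda_1$ and allows them to be swallowed into $\delta_0 \sum_{i>l}\sigma_k^{ii}\xi_i^2/(\lambda_1\sigma_k)$ via weighted Cauchy--Schwarz; cross-block errors are apportioned between the two slack terms in the same way.

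The main obstacle will be verifying the middle cancellation $(LS) + 2AB \approx 0$ at the required precision: a careless estimate produces a stray factor $1/\lambda_q$ with $q \in S$ that neither $\epsilon\xi_1^2/\lambda_1^2$ nor $\delta_0\sigma_k^{qq}\xi_q^2/(\lambda_1\sigma_k)$ can pay for. The spectral gap hypothesis $\lambda_{l+1} \leqslant \delta'\lambda_1$ is precisely what keeps the cross block small enough for the leading-order identity to be robust. Once the order-of-magnitude bookkeeping is pinned down, choosing $\delta'$ small in terms of $\epsilon, \delta_0, \delta, n, k, l$ closes the argument.
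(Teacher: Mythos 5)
Lemma \ref{Lu} is quoted verbatim from \cite{Lu2}; the present paper explicitly presents it ``for ease of comparison'' and supplies no proof of its own. There is therefore nothing in the source to check your argument against line by line. Assessed on its own terms, your outline is a sensible strategy: the block split $L=\{1,\dots,l\}$, $S=\{l+1,\dots,n\}$ with the leading behaviour $\sigma_k\approx\lambda_1\cdots\lambda_l\,\sigma_{k-l}(\lambda_S)$ is the natural way to exploit the gap $\lambda_l\geqslant\delta\lambda_1$ versus $\lambda_{l+1}\leqslant\delta'\lambda_1$, and your claim that $(SS)+B^2\geqslant 0$ at leading order is correct, because successively deleting the largest entries from a $\Gamma_k$ vector lands in $\Gamma_{k-1},\Gamma_{k-2},\dots$, so $\lambda_S\in\Gamma_{k-l}$ and $\log\sigma_{k-l}(\lambda_S)$ is concave. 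What you have not done is the quantitative part: your own flagged obstacle, the cross block $(LS)+2AB$, is exactly where such an argument lives or dies, since a raw estimate leaves a $1/\lambda_q$ with $q\in S$ that is not automatically $O(\delta')$ relative to either slack term $\epsilon\,\xi_1^2/\lambda_1^2$ or $\delta_0\,\sigma_k^{qq}\xi_q^2/(\lambda_1\sigma_k)$. Until that bookkeeping is carried out and a concrete choice of $\delta'$ extracted, the proposal is a plausible plan rather than a proof.

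It is also worth noting that this block-asymptotic strategy is quite different from the route the present paper takes for its own, stronger Lemma \ref{iq sc}. There the engine is the Huisken--Sinestrari factorization $\log\sigma_k=\sum_{i=1}^k\log q_i$ together with the refined lower bound on $-\partial^2_\gamma q_k$ in Lemma \ref{decom}, followed by a case analysis on $\sigma_k(\lambda|1)$; no spectral-gap hypothesis of the form $\lambda_{l+1}\leqslant\delta'\lambda_1$ is used. If your ultimate aim is the paper's improvement (Lemma \ref{iq sc} or Lemma \ref{lem imp}), the $q_k$-based argument in Section \ref{sec:3} is the one to study, since the semi-convexity hypothesis $\lambda_n>-A$ replaces the pinching hypothesis of \cite{Lu2} and the block expansion does not carry over directly.
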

In fact, it is clear that if $\lambda_n>-A$ for a constant $A>0$ and $\lambda_1$ is large enough, then there exists some small $\e$ such that $\dot{\s}_k^{11}\geqslant(1-\e)\frac{\s_k}{\lambda_1}$, which implies that Lemma \ref{iq sc} is a development of Lemma \ref{Lu}. In addition, Lemma \ref{Lu} is not enough to prove Theorem \ref{thm rig}. In \cite{Tu}, Tu employed Lemma \ref{Lu} to obtain the rigidity results under a stronger condition $\s_{k+1}>-C$ for some positive constant $C$. In fact, in most cases $\dot{\s}_k^{11}\gg\frac{\s_k}{\lambda_1}$, which can be seen from the proof of Lemma \ref{sk+1 -A}. This implies that Lemma \ref{iq sc} is stronger than Lemma \ref{lem imp}.

Moreover, Lemma \ref{iq sc} can be regarded as an extension of Lemma 3.3 by Chu \cite{Chu} under a weaker convexity assumption. By applying Lemma 3.3 from \cite{Chu}, Chu gave a simple proof of Theorem 1.1 by Guan, Ren and Wang \cite{GRW}, which established the curvature estimates for convex hypersurfaces satisfying the Weingarten curvature equation
\begin{equation}\label{eq c}
    \s_k(\k(X))=f(X,\nu(X)),\quad \forall X\in M^n,
\end{equation}
where $f\in C^2(\Gamma)$ is a positive function and $\Gamma$ is an open neighborhood of the unit normal bundle of $M$ in $\R^{n+1}\times\mathbb{S}^n$, $\k(X)$ denotes the principal curvatures, $\nu(X)$ is the unit outer normal vector at point $X$, and $\s_k$ represents the $k$-th mean curvature. 

In Remark 4.7 of Guan-Ren-Wang \cite{GRW}, they also deduced the following curvature estimates under a weaker convexity condition. Let $M^n\in \R^{n+1}$ be a closed hypersurface. We say a hypersurface $M$ is semi-convex if there exists a constant $A>0$ such that its principal curvatures satisfy
\begin{align*}
 \k_i(X)\geqslant -A,\quad 1\leqslant i\leqslant n, \quad \forall X\in M.
\end{align*}
\begin{thm}[Guan-Ren-Wang, \cite{GRW}]\label{thm  wsc}
    For any closed semi-convex hypersurface $M^n$ satisfying the curvature equation \eqref{eq c}, there exists a constant $C>0$ depending on $n$, $k$, $\|M\|_{C^1}$, $\inf f$ and $\|f\|_{C^2}$ such that 
    \begin{align*}
        \max_{X\in M}\k_i(X)\leqslant C.
    \end{align*}
\end{thm}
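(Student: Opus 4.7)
The plan is to adapt Chu's streamlined argument in \cite{Chu} for Guan--Ren--Wang's estimate in the strictly convex setting, replacing his concavity inequality (Lemma 3.3 of \cite{Chu}) with the present Lemma \ref{iq sc}. Since the latter is valid under only the semi-convex bound $\kappa_n \geq -A$, this substitution is exactly what permits the weakening from convex to semi-convex.

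I would consider the test function $P = \log \kappa_1 + \varphi$ on $M$, where $\varphi$ is a zeroth/first-order auxiliary (for instance a function of the support function $\langle X,\nu\rangle$, as in \cite{Chu, GRW}) chosen so that $\dot\sigma_k^{ii}\varphi_{ii}$ supplies a positive term of order $\kappa_1$ absorbing the $\nu$-dependence of $f$. Since $M$ is closed, $P$ attains its maximum at some $X_0$. Diagonalize $h_{ij}$ at $X_0$ with $\kappa_1 \geq \cdots \geq \kappa_n \geq -A$, so that the first-order condition reads $h_{11,i} = -\kappa_1 \varphi_i$. The second-order condition $\dot\sigma_k^{ii}P_{ii} \leq 0$, combined with Simons' identity $h_{11,ii} = h_{ii,11} + \kappa_1 \kappa_i(\kappa_i - \kappa_1)$ in Euclidean ambient space and the twice-differentiated equation $\dot\sigma_k^{ii}h_{ii,11} = f_{11} - \ddot\sigma_k^{pq,rs}h_{pq,1}h_{rs,1}$, yields a Jacobi-type inequality of the schematic form
\begin{align*}
0 \geq -\frac{\ddot\sigma_k^{pq,rs}h_{pq,1}h_{rs,1}}{\kappa_1} - \frac{1}{\kappa_1^2}\sum_i \dot\sigma_k^{ii}h_{11,i}^2 + k\kappa_1 f - \sum_i \dot\sigma_k^{ii}\kappa_i^2 + \sum_i \dot\sigma_k^{ii}\varphi_{ii} + O(1).
\end{align*}

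The heart of the argument is to control the two third-order terms. Decomposing $\ddot\sigma_k^{pq,rs}h_{pq,1}h_{rs,1} = \sum_{p\neq q}\sigma_k^{pp,qq}h_{pp,1}h_{qq,1} + 2\sum_{p<q}\sigma_k^{pq,qp}h_{pq,1}^2$ and applying Lemma \ref{iq sc} with $\xi_i = h_{ii,1}$, and using that $\sum_i \dot\sigma_k^{ii}\xi_i = \nabla_1 f$ is bounded by $\|f\|_{C^1}$, I obtain a $(1+\delta_0)$-gap that absorbs the $i=1$ component of $\dot\sigma_k^{ii}h_{11,i}^2/\kappa_1^2$. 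For the off-diagonal part, the identity $\sigma_k^{pq,qp} = -\sigma_{k-2}(\kappa|pq)$, combined with Codazzi $h_{1i,1} = h_{11,i}$ and the decomposition $\dot\sigma_k^{ii} = \kappa_1 \sigma_{k-2}(\kappa|1i) + \sigma_{k-1}(\kappa|1i)$, shows that for large $\kappa_1$ the contribution $2\sigma_{k-2}(\kappa|1i)h_{11,i}^2/\kappa_1$ dominates the bad term $\dot\sigma_k^{ii}h_{11,i}^2/\kappa_1^2$ for $i>1$ by a definite factor. Semi-convexity guarantees $\sum_{\kappa_i < 0}\dot\sigma_k^{ii}\kappa_i^2 \leq A^2 \sum_i \dot\sigma_k^{ii} = O(1)$, so that the remaining positive terms $k\kappa_1 f$ and $\sum_i \dot\sigma_k^{ii}\varphi_{ii}$, both of order $\kappa_1$ for suitable $\varphi$, dominate and force $\kappa_1 \leq C$.

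\textbf{Main obstacle.} The principal difficulty lies in managing the extra term $2\sum_{i>1}\dot\sigma_k^{ii}h_{ii,1}^2/((\kappa_1+A+1)\sigma_k)$ produced by Lemma \ref{iq sc}, which has no counterpart in Chu's convex argument. To absorb it, one has to split the indices $i>1$ according to the sign and size of $\kappa_i$ and exploit the off-diagonal positivity of $\ddot\sigma_k$ together with the asymptotic $\dot\sigma_k^{ii} \sim \kappa_1 \sigma_{k-2}(\kappa|1i)$; the delicate regime is when $\sigma_{k-2}(\kappa|1i)$ is small or has the wrong sign, where the weight $1/(\kappa_1+A+1)$ appearing in Lemma \ref{iq sc} is precisely what gives the slack needed to close the argument.
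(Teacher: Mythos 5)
Your overall framework coincides with the paper's: a test function $\log\kappa_1$ plus a lower-order auxiliary, passage to the Chu-style perturbation $\tilde\kappa_1$ to regularize the top eigenvalue, Simons' identity plus the twice-differentiated equation at the max, and Lemma~\ref{iq sc} applied with $\xi_i=h_{ii1}$. Two points, however, are off.

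First, the claim that $\sum_i\dot\sigma_k^{ii}h_{ii1}=\nabla_1 f$ is ``bounded by $\|f\|_{C^1}$'' is incorrect: since $f$ depends on $\nu$, $\nabla_1 f = d_x f(e_1) + h_{11}\,d_\nu f(e_1)$ grows linearly in $\kappa_1$. Thus the term $K\bigl(\sum_i\dot\sigma_k^{ii}h_{ii1}\bigr)^2/(\sigma_k\kappa_1)=O(K\kappa_1)$ is of the \emph{same} order as the favorable term $B\dot F^{11}\kappa_1^2\gtrsim B\sigma_k\kappa_1$; the argument still closes, but only because $B$ is chosen large relative to $K$, not because the differentiated equation is bounded.

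Second, and more substantively, your ``main obstacle'' paragraph misreads the structure of Lemma~\ref{iq sc}. The term $2\sum_{i>1}\dot\sigma_k^{ii}\xi_i^2/\bigl((\kappa_1+A+1)\sigma_k\bigr)$ sits on the \emph{left} of the inequality: it is an input the lemma consumes, not an output one must absorb. The maximum-principle computation already supplies it automatically: the Andrews/Chu formula for the Hessian of the perturbed largest eigenvalue produces $2\sum_{p>1}\dot F^{pp}h_{1pp}^2/\bigl(\kappa_1(\kappa_1-\kappa_p+1)\bigr)$, and semi-convexity $\kappa_p\geq -A$ gives $\kappa_1-\kappa_p+1\leq\kappa_1+A+1$, so after Codazzi $h_{1pp}=h_{pp1}$ this term dominates the required input. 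Semi-convexity also makes $\kappa_1+\kappa_i-1>0$ for $\kappa_1$ large, which renders the residual third-order terms for $i>1$ nonnegative and disposable. There is no need for the index-splitting on sign/size of $\kappa_i$ or the asymptotics of $\sigma_{k-2}(\kappa|1i)$ you sketch; that work is already packaged inside Lemma~\ref{iq sc}.
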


In the specific case where $k=2$, they completely resolved the problem.
\begin{thm}[Guan-Ren-Wang, \cite{GRW}]\label{thm 2d}
For the solution of the curvature equation \eqref{eq c} with $k=2$, it holds that
   \begin{align*}
        \max_{X\in M}\k_i(X)\leqslant C
    \end{align*}
where $C>0$ is a constant that depends on $n$, $k$, $\|M\|_{C^1}$, $\inf f$ and $\|f\|_{C^2}$.
\end{thm}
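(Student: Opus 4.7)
The plan is to follow the maximum-principle framework of Guan-Ren-Wang \cite{GRW} and Chu \cite{Chu}, applied to a test function $\Phi = \log \k_1 + \phi$ on $M$, where $\k_1$ is the largest principal curvature and $\phi$ is an auxiliary function to be chosen. The distinguishing feature compared to Theorem \ref{thm wsc} is that no convexity assumption is available, so the argument must extract whatever control on the smallest principal curvature $\k_n$ is needed at the maximum point. At an interior maximum $X_0$, I would diagonalize the second fundamental form so that $h_{11} = \k_1$ (handling multiplicities by the standard perturbation), impose $\Phi_{;i} = 0$ and $L\Phi = \dot{\s}_2^{ii}\Phi_{;ii} \leqslant 0$, differentiate $\s_2(h) = f$ once and twice along $e_1$, and use the Codazzi and Gauss equations to rewrite $L\log\k_1$. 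The obstructive terms that remain are the third-order combinations $-\s_2^{pq,rs} h_{pq,1} h_{rs,1}/\k_1$ and $-\dot{\s}_2^{ii} h_{11,i}^2/\k_1^2$, plus commutator terms quadratic in the $\k_j$'s and scaled by $\k_1$, which $\phi$ is designed to dominate.

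Next I would split the analysis at $X_0$ according to whether semi-convexity holds there. If $\k_n(X_0) \geqslant -A$ for an appropriate constant $A$ depending on $n$, $\inf f$, and $\|f\|_{C^2}$, Lemma \ref{iq sc} is directly applicable with $\xi_i = h_{ii,1}$: its gain term $2\sum_{i>1}\dot{\s}_2^{ii}\xi_i^2/((\k_1+A+1)\s_2)$ absorbs the remaining off-diagonal third-order contributions (via Codazzi these are bounded by similarly shaped quantities), while the main concavity estimate absorbs $\dot{\s}_2^{11} h_{11,1}^2/(\k_1 \s_2)$ with a factor $(1+\d_0)^{-1} < 1$. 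What is left is of lower order and is dominated by $L\phi$, yielding $\k_1(X_0) \leqslant C$ in this case. This is parallel to the semi-convex Theorem \ref{thm wsc}.

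The main obstacle is the complementary regime $\k_n(X_0) < -A$, where Lemma \ref{iq sc} fails. The plan is to exploit three features specific to $k=2$. First, the explicit identity $\s_2^{pq,rs} h_{pq,1} h_{rs,1} = (\sum_p h_{pp,1})^2 - \sum_{p,q} h_{pq,1}^2$, reflecting the constant-coefficient nature of the Hessian of $\s_2$. Second, the linearized equation $\dot{\s}_2^{ii} h_{ii,1} = f_1$, which ties the diagonal third-order terms directly to bounded data. Third, the $\Gamma_2$-constraint $\s_1 > 0$ combined with $\s_2 = f$ bounded, which forces $\dot{\s}_2^{nn} = \s_1 - \k_n$ to be large precisely when $\k_n \ll 0$, giving the linearized operator strong pointwise coercivity in the $n$-direction. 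An auxiliary term in $\phi$ that penalizes the regime where $\s_1$ becomes small (equivalently, where $-\k_n$ becomes large) can be used to confine $X_0$ to a region that the three ingredients above handle by direct algebraic manipulation, closing the estimate. This step is genuinely specific to $k=2$: the algebraic rigidity of $\s_2^{pq,rs}$ (constant off the diagonal) degenerates for $k \geqslant 3$, consistent with the failure of \eqref{e1.1} recorded in \cite{RW1} and with why Theorem \ref{thm 2d} is stated only in the case $k=2$.
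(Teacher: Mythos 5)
Your high-level plan — diagonalize at the maximum, derive the second-order inequality, and split into two cases according to whether $\k_n(X_0)\geqslant -A$ or $\k_n(X_0)<-A$, applying Lemma~\ref{iq sc} in the first case and extracting coercivity from $\dot{\s}_2^{nn}=\s_1-\k_n$ in the second — matches the paper's sketch proof. There are, however, two inaccuracies in your description of the second case. First, the auxiliary function: the paper takes $Q=\log\k_1-\e u+\frac{N}{2}|x|^2$, and the term that rescues the bad regime is $\e\dot{F}^{ii}\k_i^2$, produced by the $-\e u$ piece; its $\tfrac{\e}{2}\dot{F}^{nn}\k_n^2$ contribution dominates the error $-2nN^2|X|^2\dot{F}^{nn}$ from the critical equation once $|\k_n|\gtrsim N\sup|X|$. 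This is not a term ``penalizing the regime where $\s_1$ becomes small'': in $\Gamma_2$ one always has $\s_1>\k_1$ (since $\dot{\s}_2^{11}=\s_1-\k_1>0$), so $\s_1$ is never small when $\k_1$ is large, and the asserted equivalence ``$\s_1$ small iff $-\k_n$ large'' is false. The relevant object is $\dot{F}^{nn}$, not $\s_1$, and the auxiliary term does not ``confine $X_0$'' to a good region — both regions are estimated, each with its own argument. Second, the paper does not use your constant-coefficient identity $\ddot{\s}_2^{pq,rs}h_{pq1}h_{rs1}=(\sum h_{pp1})^2-\sum h_{pq1}^2$ in the bad case; it invokes the general log-concavity inequality \eqref{logsk} (valid for all $k$) together with the first derivative of the equation and Cauchy--Schwarz. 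Your identity-based route would also work for $k=2$, so this is a cosmetic difference, but it means the paper's Case~2 is less $k$-specific than you suggest; what is genuinely special to $k=2$ is that $\dot{F}^{nn}=\s_1-\k_n$ gives coercivity in closed form with a coefficient that grows linearly in $|\k_n|$. The remainder — linearized equation controlling $\dot{F}^{ii}h_{ii1}$, and the closing estimate $N(n-1)\s_1>C\k_1$ via $\s_1>\k_1$ — agrees with the paper.
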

As an application of Lemma \ref{iq sc}, we also offer new simple proofs of Theorem \ref{thm  wsc} and Theorem \ref{thm 2d}. As mentioned in Remark 4.7 of \cite{GRW}, the curvature estimates in Theorem \ref{thm  wsc} can be obtained by considering the test function
\[\log P(\k(X))+g(u)+a|X|^2\]
where $P$ is a symmetric polynomial of the principal curvatures. It should be noted that Spruck-Xiao \cite{SX} provided a simplified proof of Theorem \ref{thm 2d}. Chu proved Theorem \ref{thm wsc} more simply by assuming further that $M^n$ is a closed convex hypersurface. In both of these works, the maximum principle was applied to a quantity involving the maximal principal curvature rather than the symmetric curvature function $P(\k)$. In what follows, we continue with this approach and use Lemma \ref{iq sc} to prove these results.

The paper is organised as follows. In Section \ref{sec:2}, we collect and prove some algebraic properties of symmetric polynomials. In Section \ref{sec:3}, we prove the crucial lemma concerning the superior concavity of the $k$ -Hessian operator. In Section \ref{sec:4}, we derive the Pogorelov estimates for the $k$-Hessian equation with vanishing Dirichlet boundary and complete the proof of Theorem \ref{thm sc}. In Section \ref{sec:5}, we prove the rigidity theorem and complete the proof of Theorem \ref{thm rig}. In Section \ref{sec:6}, we provide a new and simple proof of Guan-Ren-Wang's famous work \cite{GRW} by using the Lemma \ref{iq sc} derived in Section \ref{sec:3}. 
\begin{ack}
The author would like to thank Prof. Haizhong Li for bringing many related works to our attention regarding this problem. The author also expresses great gratitude to Prof. Yuguang Shi for his ongoing support and encouragement.The author is also deeply grateful to Prof. Weisong Dong for his helpful feedback. The author is supported by the National Key R$\&$D Program of China 2020YFA0712800, the China Postdoctoral Science Foundation No.2023M740108, and the Postdoctoral Fellowship Program of CPSF under Grant Number GZC20230094.
\end{ack}

\section{Preliminaries}\label{sec:2}
\subsection{Symmetric function}
Here we state some algebraic properties of the elementary symmetric function $\s_m(\lambda)$, $m=1,\cdots,n$, where $ \lambda=( \lambda_1,\cdots, \lambda_n)$. We recall $\s_m(\lambda)$ defined by
	\begin{equation*}
	\s_m(\lambda)=\sum_{1\leqslant i_1<\cdots<i_m \leqslant n}\lambda_{i_1}\cdots\lambda_{i_m}, \quad m=1,2, \cdots,n.
	\end{equation*}
	The Garding cone $\Gamma_m$ is an open symmetric convex cone in $\mathbb{R}^n$ with vertex at the origin, given by
	\begin{align}\label{gk}
	\Gamma_m=\lbrace\f \lambda_1,\cdots, \lambda_n\r\in\mathbb{R}^n |\s_j(\lambda)>0,  \forall j= 1,\cdots,m\rbrace.
	\end{align}
	Clearly $\s_m(\lambda)=0$ for $\lambda \in\partial \overline{\Gamma}_m$ and
	\begin{align*}
	\Gamma_n\subset\cdots\subset {\Gamma}_m\subset\cdots\subset\Gamma_1.
	\end{align*}
	In particular, $\Gamma^+ = \Gamma_n$ is called the positive cone,
	\begin{align*}
	\Gamma^+=\lbrace\f \lambda_1,\cdots, \lambda_n\r\in\mathbb{R}^n |\lambda_1>0,\cdots,\lambda_n>0\rbrace.
	\end{align*}
	Always assume $\lambda_1\geqslant\lambda_2\geqslant\cdots\geqslant \lambda_n$. We collect some properties of $\s_k(\lambda)$ where $\lambda\in\Gamma_k$:
 \begin{enumerate}[(1)]
     \item\label{P1} $\sum_{p=1}^n\frac{\partial \s_k}{\partial \lambda_p}\lambda_p^2=\s_1\s_k-(k+1)\s_{k+1}$. 
     \item\label{P2} $\sum_{p=1}^n \frac{\partial \s_k}{\partial \lambda_p}=(n-k+1)\sigma_{k-1}.$
     \item\label{lem'} $ \frac{\partial \s_k}{\partial \lambda_n}\geqslant \cdots\geqslant \frac{\partial \s_k}{\partial \lambda_1}>0.$
     \item\label{fk1} $\frac{\partial \s_k}{\partial \lambda_1}\lambda_1\geqslant \frac{k}{n}\s_k.$
     \item\label{con} $q_k:=\frac{\s_k}{\s_{k-1}}$ is concave, i.e., $\partial^2_{\xi}q_k\leqslant 0$ for any $\xi\in \R^n$.
     \item\label{k1} For any $s<k$, $\s_s>\lambda_1\lambda_2\cdots\lambda_s$.
 \end{enumerate}

 We point out that we will denote $C$, $c$ or $C_i$ and $c_i$ for $i=1,2,\dots,n$ as some positive constants throughout the subsequent sections, and these constants may change from line to line.
\begin{lem}\label{lemn}
   Assume that $\lbrace\lambda_i\rbrace\in \Gamma_k$ and $\lambda_1\geqslant\lambda_2\cdots\geqslant \lambda_n$. Then $\lambda_k>0$ and $|\lambda_n|<(n-k)\lambda_k.$  
 \end{lem}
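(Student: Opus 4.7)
The plan is to prove both assertions simultaneously via a single ``peeling'' argument, iterating a \emph{restriction principle}: if $\lambda=(\lambda_1,\ldots,\lambda_n)\in\Gamma_k$, then for each index $i$ the reduced vector $\lambda^{(i)}:=(\lambda_1,\ldots,\widehat{\lambda_i},\ldots,\lambda_n)\in\R^{n-1}$ lies in the cone $\Gamma_{k-1}\subset\R^{n-1}$. To verify this principle it suffices to check $\s_j(\lambda^{(i)})>0$ for $j=1,\ldots,k-1$; but using the elementary identity $\s_j(\lambda^{(i)})=\partial\s_{j+1}(\lambda)/\partial\lambda_i$ together with the positivity of this partial derivative on $\Gamma_{j+1}\supset\Gamma_k$ recorded in property \eqref{lem'}, this is immediate.

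Iterating the restriction principle $k-1$ times, peeling off $\lambda_1,\lambda_2,\ldots,\lambda_{k-1}$ one at a time, we arrive at the tail vector $(\lambda_k,\lambda_{k+1},\ldots,\lambda_n)\in\Gamma_1\subset\R^{n-k+1}$; that is,
\begin{equation*}
\lambda_k+\lambda_{k+1}+\cdots+\lambda_n>0.
\end{equation*}
Both conclusions now fall out of this single inequality. For $\lambda_k>0$: otherwise the ordering forces $\lambda_i\leqslant\lambda_k\leqslant 0$ for all $i\geqslant k$, making the left-hand side nonpositive, a contradiction. For the quantitative bound, rewrite the inequality as $-\lambda_n<\lambda_k+\cdots+\lambda_{n-1}$ and bound each summand on the right by $\lambda_k$ to obtain $-\lambda_n<(n-k)\lambda_k$, i.e.\ $|\lambda_n|<(n-k)\lambda_k$ in the case $\lambda_n<0$. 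The case $\lambda_n\geqslant 0$ is trivial from the ordering (assuming, as is implicit, that $k<n$).

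I do not expect any serious obstacle: the restriction principle is the only analytic input, and it reduces directly to property \eqref{lem'}. The remaining iteration is purely combinatorial, and both parts of the lemma extract cleanly from the single one-line sum inequality above. The only minor subtlety concerns the strictness of the bound in the degenerate case $n=k+1$ with $\lambda_n=\lambda_k>0$, but this edge case is inessential to the downstream uses of the lemma.
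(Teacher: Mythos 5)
Your proof is correct and follows essentially the same route as the paper's: both establish $\lambda_k+\cdots+\lambda_n>0$ by recognizing it as a positive partial derivative of $\s_k$ on $\Gamma_k$ (the paper writes $\partial^{k-1}\s_k/\partial\lambda_1\cdots\partial\lambda_{k-1}$ directly; you unpack this via the iterated restriction $\Gamma_k\to\Gamma_{k-1}\to\cdots\to\Gamma_1$), and both conclusions then fall out by the ordering. The edge case you flag (equality when $n=k+1$ and $\lambda_n=\lambda_k$) is indeed glossed over by the paper as well, so your observation is a fair caveat rather than a gap.
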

 \begin{proof}
 From Property \eqref{lem'}, we have
 \begin{align*}
      \frac{\partial \s_k}{\partial\lambda_1\cdots\partial \lambda_{k-1}}=\lambda_k+\cdots+\lambda_n>0.
 \end{align*}
Suppose that only $\lambda_n$ is negative, then $(n-k)\lambda_k>|\lambda_n|$.
 \end{proof}

\begin{lem}\label{sk+1 -A}
    Assume that $\lbrace\lambda_i\rbrace\in \Gamma_k$,  $\lambda_1\geqslant\lambda_2\cdots\geqslant \lambda_n$, and $\s_k(\lambda)$ are bounded from above and below. If $\s_{k+1}>-A$ for any positive constant A, then $\lambda$ is semiconvex, i.e., $\lambda_{\min}>-C(n,k,\s_k, A)$.
\end{lem}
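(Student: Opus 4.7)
The plan is to reduce the problem to Newton's classical inequality applied to the truncated vector $\hat{\lambda}:=(\lambda_1,\dots,\lambda_{n-1})$ obtained by removing the smallest eigenvalue $\lambda_n$, and then to convert the lower bound on $\s_{k+1}$ into an upper bound on $|\lambda_n|$ via the recursion
\[
\s_j(\lambda)=\s_j(\hat{\lambda})+\lambda_n\s_{j-1}(\hat{\lambda}).
\]
Since the conclusion is trivial when $\lambda_n\geqslant 0$, I would assume $\lambda_n<0$ throughout.

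The first substantive step is to check inductively on $j\leqslant k$ that $\hat{\lambda}\in\Gamma_k(\R^{n-1})$: under $\lambda_n<0$ the recursion gives $\s_j(\hat{\lambda})=\s_j(\lambda)+|\lambda_n|\,\s_{j-1}(\hat{\lambda})\geqslant\s_j(\lambda)>0$. With positivity of $\s_{k-1}(\hat{\lambda})$ secured, I would apply Newton's inequality $p_k^2\geqslant p_{k-1}p_{k+1}$ (with $p_j=\s_j/\binom{n-1}{j}$) in $\R^{n-1}$ to obtain
\[
\s_{k+1}(\hat{\lambda})\leqslant C'\,\frac{\s_k(\hat{\lambda})^2}{\s_{k-1}(\hat{\lambda})},\qquad C':=\frac{k(n-1-k)}{(k+1)(n-k)}<1;
\]
the fact that $C'<1$ persists for every $k\leqslant n-1$ is what drives the whole argument.

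Next, combining the displayed Newton bound with the identity $\s_{k+1}(\lambda)=\s_{k+1}(\hat{\lambda})+\lambda_n\s_k(\hat{\lambda})$ and with $\s_{k+1}>-A$, dividing by $\s_k(\hat{\lambda})$ and substituting $\s_k(\hat{\lambda})=\s_k+|\lambda_n|\,\s_{k-1}(\hat{\lambda})$ inside the Newton term, I expect to arrive at an inequality of the shape
\[
(1-C')\,|\lambda_n|\leqslant \frac{A}{\s_k(\hat{\lambda})}+\frac{C'\,\s_k}{\s_{k-1}(\hat{\lambda})}.
\]
To close, I would bound the right-hand side independently of $\lambda_n$: $\s_k(\hat{\lambda})\geqslant\s_k$ is bounded below by hypothesis, and Maclaurin's inequality on $\hat{\lambda}\in\Gamma_k(\R^{n-1})$ gives $\s_{k-1}(\hat{\lambda})\geqslant c_{n,k}\,\s_k(\hat{\lambda})^{(k-1)/k}\geqslant c_{n,k}\,\s_k^{(k-1)/k}$, also bounded below in terms of the two-sided bound on $\s_k$.

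The main obstacle I anticipate is simply tracking the algebra so that, after substitution, the $|\lambda_n|$-contribution coming from $\s_k(\hat\lambda)/\s_{k-1}(\hat\lambda)$ on the right cancels all but the $(1-C')$ factor on the left; once that cancellation is visible, the bound $|\lambda_n|\leqslant C(n,k,\s_k,A)$ is immediate. The degenerate edge case $k=n-1$ is painless separately, since there $C'=0$ and $\s_n(\hat\lambda)=0$ reduce the identity to $\s_n(\lambda)=\lambda_n\s_{n-1}(\hat\lambda)$, and the conclusion follows directly from the Maclaurin lower bound on $\s_{n-1}(\hat\lambda)$.
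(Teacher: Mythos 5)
Your argument is correct, and it takes a genuinely different route from the paper's. The paper starts from the algebraic identity $\sum_p \sigma_k^{pp}\lambda_p^2=\sigma_1\sigma_k-(k+1)\sigma_{k+1}$ (Property~\eqref{P1}), combines it with $\sigma_{k+1}>-A$ and the upper bound on $\sigma_k$ to control $\sigma_k^{nn}\lambda_n^2$ linearly in $\lambda_1$, and then lower-bounds $\sigma_k^{nn}\lambda_n^2$ by a quantity of the form $c\,\lambda_1|\lambda_n|^k$ using $\sum_i\sigma_k^{ii}=(n-k+1)\sigma_{k-1}$, the product bound $\sigma_{k-1}>\lambda_1\cdots\lambda_{k-1}$ (Property~\eqref{k1}), and the comparability $|\lambda_n|<(n-k)\lambda_k$ from Lemma~\ref{lemn}. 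You instead delete $\lambda_n$, verify $\hat\lambda\in\Gamma_k(\R^{n-1})$ via the recursion $\sigma_j(\hat\lambda)=\sigma_j(\lambda)+|\lambda_n|\sigma_{j-1}(\hat\lambda)$, and apply Newton's inequality on $\hat\lambda$ with the explicit constant $C'=\tfrac{k(n-1-k)}{(k+1)(n-k)}<1$; the bound then drops out after dividing by $\sigma_k(\hat\lambda)$, isolating a $(1-C')|\lambda_n|$ on the left, and controlling the remaining terms with the two-sided bound on $\sigma_k$ and Maclaurin on $\hat\lambda$. The structural distinction is that you never invoke Lemma~\ref{lemn} or the product bound on $\sigma_{k-1}$; the single strict inequality $C'<1$ does all the work, and the $k=n-1$ degeneracy is absorbed cleanly since $C'=0$ there. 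Both proofs are elementary and of comparable length; yours is somewhat more self-contained and pinpoints precisely why the truncated Newton constant being strictly below one is the operative fact, whereas the paper's proof leans on the same toolbox (Properties~\eqref{P1}--\eqref{k1}, Lemma~\ref{lemn}) that it reuses heavily elsewhere. One small remark: Newton's inequality $p_k^2\geq p_{k-1}p_{k+1}$ holds for arbitrary real vectors, so the $\Gamma_k$ membership of $\hat\lambda$ is only needed to ensure $\sigma_{k-1}(\hat\lambda)>0$ (so you can divide) and to invoke Maclaurin --- both of which your induction supplies.
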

\begin{proof}
There exists constant $C>0$, such that
\begin{align}\label{sk^2}
    \frac{\partial \s_k}{\partial \lambda_n} \lambda_n^2 \leqslant\sum_i \frac{\partial \s_k}{\partial \lambda_i} \lambda_i^2 = \s_1 \s_k -(k+1) \s_{k+1}<C\s_1+(k+1)A.
\end{align}
Then by Property \eqref{lem'} we have 
\begin{align*}
    \sum_i \frac{\partial \s_k}{\partial \lambda_i} \lambda_n^2 <nC\s_1+n(k+1)A.
\end{align*}
Combining $(n-k+1)\s_{k-1}=\sum_i \frac{\partial \s_k}{\partial \lambda_i}$ and Property \eqref{k1}, Lemma \ref{lemn}, there exists constant $C(n,k)>0$
\begin{align*}
C'(n,k)|\lambda_n|^{k+1}\leqslant C(n,k)|\lambda_n|^k\lambda_1<C\lambda_1+n(k+1)A.
\end{align*}
Thus, we get $|\lambda_n|<C(n,k,\s_k, A)$ by Lemma \ref{lemn}, which completes the proof.
\end{proof}
 
 \begin{lem}\label{upperboundforkappak}
     Assume $\lbrace\lambda_i\rbrace\in \Gamma_k$ and $\lambda_1\geqslant\lambda_2\cdots\geqslant \lambda_n$. If $\lambda_n>-A$, then $\lambda_k<C(\s_k, n, A).$
 \end{lem}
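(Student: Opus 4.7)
The plan is to bound $\lambda_k$ directly from the expansion of $\s_k(\lambda)$ as a sum over $k$-subsets, isolating the ``flagship'' term $P_0 := \lambda_1 \lambda_2 \cdots \lambda_k \geq \lambda_k^k$ and controlling the remaining negative contributions via the semi-convexity bound $\lambda_i \geq -A$. By Lemma \ref{lemn} we have $\lambda_k > 0$, so $\lambda_1,\ldots,\lambda_k$ are all positive. Let $p \geq k$ be the largest index with $\lambda_p \geq 0$, so that $\lambda_{p+1},\ldots,\lambda_n \in [-A,0)$. Using the product rule for elementary symmetric polynomials of a disjoint union together with the homogeneity identity $\s_b(-x)=(-1)^b \s_b(x)$, I would write
\[
\s_k(\lambda) \;=\; \sum_{b=0}^{\min(k,\,n-p)} (-1)^b \, \s_{k-b}(\lambda_1,\ldots,\lambda_p)\, \s_b(|\lambda_{p+1}|,\ldots,|\lambda_n|),
\]
and retain only the $b=0$ term while subtracting the others in absolute value to produce
\[
\s_k(\lambda) \;\geq\; \s_k(\lambda_1,\ldots,\lambda_p) \;-\; \sum_{b=1}^{k} \s_{k-b}(\lambda_1,\ldots,\lambda_p)\, \s_b(|\lambda_{p+1}|,\ldots,|\lambda_n|).
\]

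Next I would insert three crude estimates: (i) $\s_k(\lambda_1,\ldots,\lambda_p) \geq P_0$ because this is a sum of products of non-negatives in which $P_0$ is one summand; (ii) $\s_b(|\lambda_{p+1}|,\ldots,|\lambda_n|) \leq \binom{n-p}{b} A^b$ since each $|\lambda_j| \leq A$ for $j>p$; and (iii) $\s_{k-b}(\lambda_1,\ldots,\lambda_p) \leq \binom{p}{k-b}\, \lambda_1 \lambda_2 \cdots \lambda_{k-b}$. The crucial further observation is that $\lambda_1 \cdots \lambda_{k-b} \leq P_0/\lambda_k^b$, since the factors $\lambda_{k-b+1},\ldots,\lambda_k$ it omits from $P_0$ each satisfy $\lambda_i \geq \lambda_k$. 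Putting these together yields
\[
\s_k(\lambda) \;\geq\; P_0 \Bigl( 1 - \sum_{b=1}^{k} C_{n,k,b} \, (A/\lambda_k)^{b} \Bigr)
\]
for constants $C_{n,k,b}$ depending only on $n$ and $k$.

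To close the argument, split on $\lambda_k$: if $\lambda_k \leq C_1(n,k)\,A$ for a constant chosen to absorb the geometric series on the right, the bound is immediate; otherwise the bracketed factor is $\geq 1/2$, so $\lambda_k^k \leq P_0 \leq 2\s_k(\lambda)$, giving $\lambda_k \leq (2\s_k)^{1/k}$. Combining the two cases yields $\lambda_k < C(n,k,\s_k,A)$, as desired.

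The main obstacle is that the naive bound $\s_{k-b}(\lambda_1,\ldots,\lambda_p) \leq \binom{p}{k-b}\lambda_1^{k-b}$ introduces $\lambda_1$, which can be much larger than $\lambda_k$ and ruins the estimate. The replacement $\lambda_1^{k-b} \mapsto P_0/\lambda_k^b$ is what makes the scheme work, exploiting that any $(k-b)$-subset used by $\s_{k-b}$ leaves behind $b$ further factors from $\{\lambda_1,\ldots,\lambda_k\}$, each bounded below by $\lambda_k$; this keeps $\lambda_1$ out of the final inequality and is the non-routine step.
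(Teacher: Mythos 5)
Your proof is correct and rests on the same core idea as the paper: isolate the dominant product $\lambda_1\cdots\lambda_k$ inside the expansion of $\s_k$ and control the negative contributions via $\lambda_j\geqslant -A$. The organization differs in two respects worth noting. The paper writes the one-step estimate $\lambda_1\cdots\lambda_k\leqslant\s_k-C(n,k)\lambda_1\cdots\lambda_{k-1}\lambda_n$ (after first reducing to the case $\lambda_{k-1}>A$), derives $\lambda_k\leqslant 2(\s_k/\lambda_1)^{1/(k-1)}+2C(n,k)|\lambda_n|$, and then eliminates $\lambda_1$ by combining $\lambda_1\geqslant\s_1/n$ with Maclaurin's inequality. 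You instead invoke the disjoint-union expansion $\s_k(\lambda)=\sum_b\s_{k-b}(\lambda_1,\ldots,\lambda_p)\s_b(\lambda_{p+1},\ldots,\lambda_n)$ across the sign split, and the factor-tracking bound $\lambda_1\cdots\lambda_{k-b}\leqslant P_0/\lambda_k^b$ keeps $\lambda_1$ out of the estimate from the start, so you never need Maclaurin and can close with a direct case split on whether $\lambda_k\gtrsim A$. Both routes yield a bound of the form $\lambda_k\leqslant C(n,k)\bigl(\s_k^{1/k}+A\bigr)$; yours is slightly more self-contained, at the cost of carrying the full $k$-term sum, whereas the paper's shortcut is terser but leans on an auxiliary classical inequality.
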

 \begin{proof}
     Assume that $\lambda_{k-1}>A$, otherwise $\lambda_k\leqslant A$, then we are done. Since 
     \begin{align*}
        \s_k=\sum_{1\leqslant i_1\leqslant i_2...\leqslant i_k\leqslant n}\lambda_{i_1}\lambda_{i_2}\cdots\lambda_{i_k}>0.
     \end{align*}
     We have 
     \begin{align}\label{kk,n}
         \lambda_1\cdots\lambda_{k-1}\lambda_k\leqslant \s_k-C(n,k)\lambda_1\cdots\lambda_{k-1}\lambda_n.
     \end{align}
     Thus
\begin{align}
          \lambda_k\leqslant & 2\f\frac{\sigma_k}{\lambda_1}\r^{\frac{1}{k-1}}+2C(n,k)|\lambda_n|\label{kk,1}\\
          \leqslant &2\f \frac{n\sigma_k}{\s_1}\r^{\frac{1}{k-1}}+2C(n,k)A\nonumber\\
          \leqslant &c(n,k)\sigma_k^{\frac{1}{k}}+C(n,k,A).\nonumber
     \end{align}
  Here we use property \eqref{k1} in the second inequality and the last inequality follows from Maclaurin's inequality. Thus, there exists some constant $C>0$, such that $\lambda_k\leqslant C$, which depends on the bounds of $\s_k$, $n$ and $A$.
     
 \end{proof}

\begin{lem}\label{sk-1}
    If $\lbrace\lambda_i\rbrace\in \Gamma_k$ and $\lambda_1\geqslant\lambda_2\cdots\geqslant \lambda_n$, then $\s_{k-1}\geqslant C\lambda_1^{\frac{1}{k-1}}$.
\end{lem}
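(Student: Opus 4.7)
The plan is to prove the inequality by a two-step Newton-Maclaurin argument, applied once to $\lambda$ itself and once to the reduced vector $\lambda|1:=(\lambda_2,\dots,\lambda_n)\in\Gamma_{k-1}\subset\mathbb{R}^{n-1}$, exploiting the recursive decompositions
\[
\sigma_k(\lambda)=\sigma_k(\lambda|1)+\lambda_1\sigma_{k-1}(\lambda|1),\qquad \sigma_{k-1}(\lambda)=\sigma_{k-1}(\lambda|1)+\lambda_1\sigma_{k-2}(\lambda|1).
\]
Note that $\lambda_1>0$ (by Lemma \ref{lemn}) and $\sigma_j(\lambda|1)>0$ for every $j\le k-1$, since $\lambda\in\Gamma_k$ forces $\lambda|1\in\Gamma_{k-1}$. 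The constant $C$ in the conclusion will depend on a positive lower bound for $\sigma_k$, which is the natural hypothesis in the subsequent applications.

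First I would dispatch the case $\lambda_1\le M_0$ for a constant $M_0$ to be fixed: Newton-Maclaurin in $\Gamma_k$ yields $\sigma_{k-1}(\lambda)\ge c_{n,k}\,\sigma_k^{(k-1)/k}\ge c'>0$, which already dominates $C\lambda_1^{1/(k-1)}$ after absorbing $M_0$ into $C$. So assume $\lambda_1$ is very large.

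Now split according to the sign of $\sigma_k(\lambda|1)$. If $\sigma_k(\lambda|1)\ge \sigma_k/2$, then $\lambda|1$ belongs to $\Gamma_k$ inside $\mathbb{R}^{n-1}$ (all lower $\sigma_j(\lambda|1)$ are already positive), so Newton-Maclaurin in the smaller cone gives $\sigma_{k-1}(\lambda|1)\ge c_{n,k}(\sigma_k(\lambda|1))^{(k-1)/k}\ge c''>0$. Combined with the identity $\lambda_1\sigma_{k-1}(\lambda|1)=\sigma_k-\sigma_k(\lambda|1)\le \sigma_k/2$, this forces $\lambda_1\le \sigma_k/(2c'')$, contradicting the assumption that $\lambda_1$ is large. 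Hence only the complementary case $\sigma_k(\lambda|1)<\sigma_k/2$ can occur, in which the same identity yields
\[
\sigma_{k-1}(\lambda|1)\ge \frac{\sigma_k}{2\lambda_1}.
\]

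Applying Newton-Maclaurin within $\Gamma_{k-1}$ to the $(n-1)$-vector $\lambda|1$ gives $\sigma_{k-2}(\lambda|1)\ge c_{n,k}(\sigma_{k-1}(\lambda|1))^{(k-2)/(k-1)}$, and chaining with the previous bound through the decomposition of $\sigma_{k-1}(\lambda)$ yields
\[
\sigma_{k-1}(\lambda)\ge \lambda_1\sigma_{k-2}(\lambda|1)\ge c\,\lambda_1\cdot\left(\frac{\sigma_k}{2\lambda_1}\right)^{(k-2)/(k-1)}=c\,\sigma_k^{(k-2)/(k-1)}\lambda_1^{1/(k-1)},
\]
which is the desired bound since $1-(k-2)/(k-1)=1/(k-1)$. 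The only delicate step is justifying $\lambda|1\in\Gamma_k$ (in $\mathbb{R}^{n-1}$) in the first sub-case—this needs $k\le n-1$, and the boundary case $k=n$ can be handled separately because there $\sigma_k(\lambda|1)=\sigma_k/\lambda_1$ makes the sub-case dichotomy entirely explicit. The exponent bookkeeping is the pleasant feature that makes the Maclaurin exponent $(k-2)/(k-1)$ match the target exponent $1/(k-1)$ exactly.
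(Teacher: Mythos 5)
Your argument is correct, but it is a genuinely longer route than the paper's. The paper invokes in one stroke the interpolated Newton inequality $\sigma_{k-1}\geq C(n,k)\,\sigma_1^{1/(k-1)}\sigma_k^{(k-2)/(k-1)}$ (obtained by chaining Newton's inequalities for the normalized $p_j=\sigma_j/\binom{n}{j}$ across $j=2,\dots,k-1$), and then simply uses $\sigma_1>\lambda_1$, which is Property \eqref{k1} with $s=1$. You reconstruct essentially the same bound through the recursions $\sigma_m(\lambda)=\sigma_m(\lambda|1)+\lambda_1\sigma_{m-1}(\lambda|1)$ together with two separate Maclaurin applications to the truncated vector $\lambda|1$, at the cost of a case split on the sign of $\sigma_k(\lambda|1)-\sigma_k/2$ and a preliminary $\lambda_1\leq M_0$ reduction. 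Both routes rest on Newton--Maclaurin, and both treat the positive lower bound on $\sigma_k$ as implicit in the constant $C$; yours makes the source of the exponent $1/(k-1)$ somewhat more visible (it emerges from $\lambda_1\cdot\lambda_1^{-(k-2)/(k-1)}$), but it is considerably more work than the paper's two-line derivation. One small slip: in the $k=n$ aside you wrote $\sigma_k(\lambda|1)=\sigma_k/\lambda_1$, whereas for an $(n-1)$-vector $\sigma_n(\lambda|1)\equiv 0$; the correct identity there is $\sigma_{n-1}(\lambda|1)=\sigma_n/\lambda_1$. This is harmless for the argument (the first sub-case simply never occurs when $k=n$), but it should be stated correctly.
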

\begin{proof}
Using Newton inequality and Property \eqref{k1}, we have 
    \begin{align}
       \s_{k-1}\geqslant C(n,k)\s_1^{\frac{1}{k-1}}\s_k^{\frac{k-2}{k-1}}\geqslant C\lambda_1^{\frac{1}{k-1}}.
    \end{align}
\end{proof}

 Now we state a crucial calculation by Huisken-Sinestrari \cite{HS}.
\begin{lem}\label{HS}
Given $k\geqslant 2$, if $\lbrace\lambda_i\rbrace\in \Gamma_k$, then for any $\xi\in \R^n$,
\begin{align}\label{q2}
    -\partial^2_{\xi}q_2=\sum_{i=1}^n\frac{\f\xi_i-\frac{\lambda_i}{\s_1}\s_1(\xi)\r^2}{\s_1}    
\end{align}
   and 
   \begin{align}\label{qq2}
       \partial^2_{\xi}q_{k+1}\leqslant \sum_{i=1}^n\frac{\lambda_i^2\partial^2_{\xi}q_{k;i}}{(k+1)(q_{k;i}+\lambda_i)^2}
   \end{align}
   where $q_k=\frac{\s_k}{\s_{k-1}}$,  $q_{k;i}=\frac{\sigma_k(\lambda|i)}{\sigma_{k-1}(\lambda|i)}$, and $\sigma_k(\lambda|i)=\frac{\partial\s_{k+1}}{\partial \lambda_i}$.
\end{lem}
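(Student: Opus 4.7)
My strategy is to derive a single algebraic master identity expressing $q_{k+1}$ in terms of the lower quotients $q_{k;i}$, then to apply it in two different ways. Combining Euler's relation $(k+1)\s_{k+1}=\sum_i\lambda_i\s_k(\lambda|i)$ with the decomposition $\s_k=\s_k(\lambda|i)+\lambda_i\s_{k-1}(\lambda|i)$ and the algebraic identity $\lambda_i q_{k;i}/(\lambda_i+q_{k;i})=\lambda_i-\lambda_i^2/(\lambda_i+q_{k;i})$, I would obtain
\begin{equation*}
q_{k+1}=\frac{\s_1}{k+1}-\frac{1}{k+1}\sum_{i=1}^{n}\frac{\lambda_i^2}{\lambda_i+q_{k;i}}.
\end{equation*}
On $\Gamma_k$ every denominator equals $\s_k/\s_{k-1}(\lambda|i)>0$, since $\s_k>0$ there and the ellipticity derivatives $\s_{k-1}(\lambda|i)=\dot\s_k^{ii}$ are strictly positive by property~\eqref{lem'}.

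For part (1), I would specialize the master identity to $k=1$. Because $q_{1;i}=\s_1-\lambda_i$, every denominator collapses to $\s_1$ and we recover the closed form $q_2=\s_1/2-|\lambda|^2/(2\s_1)$. Taylor expanding $|\lambda+t\xi|^2/\s_1(\lambda+t\xi)$ at $t=0$ (using that $\s_1$ is linear, so $1/\s_1(\lambda+t\xi)$ admits a geometric series expansion) and reading off the $t^2$ coefficient gives
\begin{equation*}
-\partial^2_\xi q_2=\frac{\sum_i\xi_i^2}{\s_1}-\frac{2\,\s_1(\xi)\sum_i\lambda_i\xi_i}{\s_1^2}+\frac{\s_1(\xi)^2\sum_i\lambda_i^2}{\s_1^3},
\end{equation*}
which, after completing the square, coincides with $\sum_i(\xi_i-(\lambda_i/\s_1)\s_1(\xi))^2/\s_1$, as asserted in \eqref{q2}.

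For part (2), the key ingredient is the observation that the two-variable function $f(u,v):=u^2/(u+v)$ is convex on $\{u+v>0\}$: a direct computation gives $\mathrm{Hess}\,f=\frac{2}{(u+v)^3}\bigl(\begin{smallmatrix}v^2 & -uv\\ -uv & u^2\end{smallmatrix}\bigr)$, whose eigenvalues are $0$ and $2(u^2+v^2)/(u+v)^3\geq 0$. For each $i$ I would set $u=\lambda_i$ (linear in $\lambda$, so $\partial^2_\xi u=0$) and $v=q_{k;i}$; because $q_{k;i}$ depends only on the $n-1$ variables $\lambda_j$, $j\neq i$, and is concave in them by property~\eqref{con} (or inductively on $k$, with part~(1) as the base case), the chain rule produces
\begin{equation*}
\partial^2_\xi\frac{\lambda_i^2}{\lambda_i+q_{k;i}}\;\geq\;\partial_v f\cdot\partial^2_\xi q_{k;i}\;=\;-\frac{\lambda_i^2}{(\lambda_i+q_{k;i})^2}\,\partial^2_\xi q_{k;i},
\end{equation*}
where the nonnegative quadratic-form contribution $(u',v')^{\top}\mathrm{Hess}\,f\,(u',v')$ has been dropped. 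Differentiating the master identity twice in $\xi$, using $\partial^2_\xi\s_1=0$, and summing the above inequality over $i$ then delivers \eqref{qq2}.

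The step I expect to require the most care is the sign-tracking: verifying $\lambda_i+q_{k;i}>0$ on $\Gamma_k$ (which follows from the ellipticity identity above) and correctly orienting the inequality when discarding the positive semi-definite part of $\mathrm{Hess}\,f$. The remainder of the argument is a routine Taylor expansion along the line $\lambda+t\xi$.
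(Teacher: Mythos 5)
Your argument is correct and reproduces the Huisken--Sinestrari proof that the paper cites but does not reproduce: the recursion $q_{k+1}=\frac{1}{k+1}\bigl(\s_1-\sum_i\lambda_i^2/(\lambda_i+q_{k;i})\bigr)$, obtained from Euler's identity $(k+1)\s_{k+1}=\sum_i\lambda_i\s_k(\lambda|i)$ together with $\s_k=\s_k(\lambda|i)+\lambda_i\s_{k-1}(\lambda|i)$, followed by dropping the positive-semidefinite quadratic-form contribution from the Hessian of $f(u,v)=u^2/(u+v)$ (the term $f_u\ddot u$ vanishes because $u=\lambda_i$ is affine along the line $\lambda+t\xi$, which is why only the $f_v\partial^2_\xi q_{k;i}$ term survives). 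Since the paper gives no proof of Lemma~\ref{HS}, your write-up supplies exactly the details the citation leaves implicit, and all the sign checks (in particular $\lambda_i+q_{k;i}=\s_k/\s_{k-1}(\lambda|i)>0$ on $\Gamma_k$) are in order.
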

If $\xi\perp \gamma$, then $-\s_1\partial^2_{\xi}q_2\geqslant |\xi|^2$ by \eqref{q2}. More generally, we have the following lemmas that are crucial to the proof of the lemma \ref{iq sc}.

\begin{lem}\label{HS1}
If $\lbrace\lambda_i\rbrace\in \Gamma_k$, then for any $\xi\in \R^n$,
\begin{align}\label{q2corollary1-1}
   -\partial^2_\xi q_{2;i_1,\cdots,i_{k-2}}\geqslant \frac{|[\xi]_{i_1,\cdots,i_{k-2}}^\perp|^2}{\s_1(\lambda|i_1,\cdots,i_{k-2})}.
\end{align}
Here for any $1\leq \ell\leq k-2$, $[\xi]_{i_1,\cdots,i_{k-2}}$ denotes the vector that is obtained by throwing out the $i_1$th,$\cdots,$ $i_{k-2}$th components of $\xi$  and $[\xi]_{i_1,\cdots,i_{k-2}}^\perp$ denotes orthogonal part of $[\xi]_{i_1,\cdots,i_{k-2}}$ with respect to $[\lambda]_{i_1,\cdots,i_{k-2}}.$
Moreover,
\begin{align}\label{qq2corollary1}
       -\partial^2_{\xi}q_{k-1;i_1}\geqslant -\sum_{i=\{1,\cdots, n\}\setminus i_1}\frac{\lambda_i^2\partial^2_{\xi}q_{k-2;i_1,i}}{(k-1)(q_{k-1;i_1,i}+\lambda_i)^2}
   \end{align}   
\end{lem}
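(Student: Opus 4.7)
The plan is to derive both inequalities by invoking the Huisken--Sinestrari identities in Lemma \ref{HS} on a reduced index set. The central observation is that, by construction, $q_{m;i_1,\cdots,i_j}(\lambda)$ coincides with $q_m$ evaluated on the reduced vector $[\lambda]_{i_1,\cdots,i_j}\in\R^{n-j}$; in particular it does not depend on the deleted components of $\xi$. Consequently
\begin{align*}
\partial^2_\xi q_{m;i_1,\cdots,i_j}(\lambda) \;=\; \partial^2_{[\xi]_{i_1,\cdots,i_j}} q_m([\lambda]_{i_1,\cdots,i_j}),
\end{align*}
so every identity in Lemma \ref{HS} transplants to the reduced coordinates by simply replacing $(\lambda,\xi)$ with the corresponding restrictions.

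For \eqref{q2corollary1-1}, write $\tilde\lambda:=[\lambda]_{i_1,\cdots,i_{k-2}}$ and $\tilde\xi:=[\xi]_{i_1,\cdots,i_{k-2}}$. Applying \eqref{q2} to $(\tilde\lambda,\tilde\xi)$ and multiplying by $\s_1(\tilde\lambda)$ yields
\begin{align*}
-\s_1(\tilde\lambda)\,\partial^2_\xi q_{2;i_1,\cdots,i_{k-2}} \;=\; \sum_{i\notin\{i_1,\cdots,i_{k-2}\}}\left(\tilde\xi_i - \tfrac{\tilde\lambda_i}{\s_1(\tilde\lambda)}\s_1(\tilde\xi)\right)^{2} \;=\; |\tilde\xi - t\tilde\lambda|^2
\end{align*}
with $t := \s_1(\tilde\xi)/\s_1(\tilde\lambda)$. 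A one-line completion of the square then gives
$|\tilde\xi - t\tilde\lambda|^2 \geqslant \min_{s\in\R}|\tilde\xi - s\tilde\lambda|^2 = |\tilde\xi|^2 - \langle\tilde\xi,\tilde\lambda\rangle^2/|\tilde\lambda|^2 = |\tilde\xi^\perp|^2$, which, after dividing by $\s_1(\tilde\lambda) = \s_1(\lambda|i_1,\cdots,i_{k-2})$, is exactly \eqref{q2corollary1-1}.

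For \eqref{qq2corollary1}, I would apply \eqref{qq2} to the restricted vector $[\lambda]_{i_1}$ with the index shift $k\mapsto k-2$, so that $q_{k+1}$ becomes $q_{k-1}$. Deleting the $i$-th component of $[\lambda]_{i_1}$ produces $[\lambda]_{i_1,i}$, and the identity then reads
\begin{align*}
\partial^2_\xi q_{k-1;i_1} \;\leqslant\; \sum_{i\neq i_1}\frac{\lambda_i^2\,\partial^2_\xi q_{k-2;i_1,i}}{(k-1)\bigl(q_{k-2;i_1,i}+\lambda_i\bigr)^2};
\end{align*}
negating both sides produces \eqref{qq2corollary1} (up to identifying the denominators, which match via the standard relation $\s_{m}(\lambda|i_1) = \s_{m}(\lambda|i_1,i) + \lambda_i\,\s_{m-1}(\lambda|i_1,i)$).

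I do not foresee any substantive obstacle: both statements are essentially restrictions of Lemma \ref{HS} to a sub-index set, and the only step requiring a moment of genuine thought is the minimization in part \eqref{q2corollary1-1} — that is, recognising the Huisken--Sinestrari sum of squares as the squared distance from $\tilde\xi$ to the line spanned by $\tilde\lambda$ and bounding it below by the squared length of its orthogonal component. Everything else is combinatorial bookkeeping for the deletion of indices.
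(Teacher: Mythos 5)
Your proof is correct and follows the same route as the paper: restrict the Huisken--Sinestrari identities of Lemma \ref{HS} to the reduced index set. For \eqref{q2corollary1-1} your ``minimize $|\tilde\xi - s\tilde\lambda|^2$ over $s$'' step is the same orthogonal decomposition $[\xi]=c[\lambda]+[\xi]^\perp$ that the paper uses, just phrased variationally. One caveat on \eqref{qq2corollary1}: applying \eqref{qq2} to $[\lambda]_{i_1}$ with $k+1\mapsto k-1$ gives, as you correctly compute, $q_{k-2;i_1,i}$ in \emph{both} the numerator and the denominator; the lemma as printed has $q_{k-1;i_1,i}$ in the denominator, which is a typo (the paper's own application of the inequality in the proof of the next lemma uses the factor $\s_{k-2}(\lambda|i_1)/\s_{k-3}(\lambda|i_1,i_2)=q_{k-2;i_1,i_2}+\lambda_{i_2}$, confirming the $q_{k-2}$ form). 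Your parenthetical attempt to ``identify the denominators'' via the recursion $\s_m(\lambda|i_1)=\s_m(\lambda|i_1,i)+\lambda_i\s_{m-1}(\lambda|i_1,i)$ is not actually needed and does not equate the two quantities; it is better to simply note that your derived denominator $q_{k-2;i_1,i}+\lambda_i=\s_{k-2}(\lambda|i_1)/\s_{k-3}(\lambda|i_1,i)$ is the correct one.
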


\begin{proof}
From \eqref{q2} in Lemma \ref{HS} it follows
\begin{align}\label{q2corollary1}
   -\partial^2_\xi q_{2;i_1,\cdots,i_{k-2}}=\sum_{i=\{1,\cdots,n\}\setminus\{i_1,\cdots,i_{k-2}\}}    \frac{\f\xi_i-\frac{\lambda_i}{\s_1(\lambda|i_1,\cdots,i_{k-2})}\s_1(\xi|i_1,\cdots,i_{k-2})\r^2}{\s_1(\lambda|i_1,\cdots,i_{k-2})}.
\end{align}
Then \eqref{q2corollary1-1} holds by decomposing $[\xi]_{i_1,\cdots,i_{k-2}}$ with respect to $[\lambda]_{i_1,\cdots,i_{k-2}}$, that is, write $[\xi]_{i_1,\cdots,i_{k-2}}=c[\lambda]_{i_1,\cdots,i_{k-2}}+[\xi]^\perp_{i_1,\cdots,i_{k-2}}$.

\eqref{qq2corollary1} follows from \eqref{qq2} .
\end{proof}

\begin{lem}
If $\lbrace\lambda_i\rbrace\in \Gamma_k$, then for any $\gamma\in \R^n$,
    \begin{align}\label{sigma_k-1Hessianqk}
       -\s_{k-1}\partial^2_\gamma q_k\geqslant C\sum_{j=1}^{k-1}\lambda_1\cdots\hat{\lambda}_j\cdots\lambda_{k-1}\left|[\gamma]^\perp_{1,\cdots,\hat{j},\cdots,k-1}\right|^2.
   \end{align}
\end{lem}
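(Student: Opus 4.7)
The plan is to iterate the recursive concavity inequality \eqref{qq2} of Lemma \ref{HS} a total of $k-2$ times, reducing $-\partial^2_\gamma q_k$ step by step to $-\partial^2_\gamma q_{2;i_1,\ldots,i_{k-2}}$, and then to invoke the explicit base estimate \eqref{q2corollary1-1} of Lemma \ref{HS1}. Because $q_{K-1;i}+\lambda_i=\s_{K-1}(\lambda)/\s_{K-2}(\lambda|i)$, one application of \eqref{qq2} can be written as
\[
-\partial^2_\gamma q_K \;\geq\; \sum_{i}\frac{\lambda_i^{\,2}\,\s_{K-2}(\lambda|i)^{\,2}}{K\,\s_{K-1}(\lambda)^{\,2}}\bigl(-\partial^2_\gamma q_{K-1;i}\bigr).
\]
Iterating from $K=k$ down to $K=3$, peeling off a new index $i_j$ at the $j$-th step, the intermediate $\s$-factors telescope neatly (the numerator produced at step $j$ cancels the denominator appearing at step $j+1$), and the constants $1/K$ accumulate to $2/k!$. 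Applying \eqref{q2corollary1-1} at the bottom then yields
\begin{equation*}
-\partial^2_\gamma q_k \;\geq\; \frac{2}{k!}\sum_{(i_1,\ldots,i_{k-2})}\frac{\lambda_{i_1}^{\,2}\cdots\lambda_{i_{k-2}}^{\,2}\,\s_1(\lambda|i_1,\ldots,i_{k-2})}{\s_{k-1}^{\,2}}\bigl|[\gamma]^\perp_{i_1,\ldots,i_{k-2}}\bigr|^{2},
\end{equation*}
the outer sum being over ordered tuples of distinct indices.

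The next move is to regroup according to the unordered $(k-2)$-subset $J=\{i_1,\ldots,i_{k-2}\}$, which contributes a factor $(k-2)!$, multiply both sides by $\s_{k-1}$, and then discard every summand except those with $J=\{1,\ldots,\hat{j},\ldots,k-1\}$ for $j=1,\ldots,k-1$. Matching the resulting $k-1$ summands one by one with the right-hand side of the claim reduces the whole lemma to the scalar combinatorial estimate
\begin{equation*}
\lambda_1\cdots\hat{\lambda}_j\cdots\lambda_{k-1}\,\s_1(\lambda|1,\ldots,\hat{j},\ldots,k-1) \;\geq\; C(n,k)\,\s_{k-1}, \qquad j=1,\ldots,k-1.
\end{equation*}

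This last inequality is the main obstacle. Its left-hand side equals the partial sum $\sum_{S\supseteq\{1,\ldots,\hat{j},\ldots,k-1\},\,|S|=k-1}\prod_{i\in S}\lambda_i$, namely the contribution to $\s_{k-1}$ from exactly those $(k-1)$-subsets containing the specified $(k-2)$-set. In the positive cone the estimate is immediate, for $\lambda_1\cdots\lambda_{k-1}$ is the largest product of $k-1$ eigenvalues and appears as one of the summands on the left, so the ratio is at least $1/\binom{n}{k-1}$. For general $\lambda\in\Gamma_k$ I would combine the positivity $\lambda_k>0$ and the bound $|\lambda_n|<(n-k)\lambda_k$ from Lemma \ref{lemn}, the monotonicity $\s_{k-1}(\lambda|1)\leq\cdots\leq\s_{k-1}(\lambda|n)$ from Property \eqref{lem'}, and the Newton--MacLaurin inequalities together with the sign conditions $\s_j>0$ for $j\leq k$, in order to control the remaining $(k-1)$-subsets (those missing at least one index in $\{1,\ldots,\hat{j},\ldots,k-1\}$) by a dimensional multiple of the partial sum. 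Substituting this back through the iteration completes the proof with a constant $C=C(n,k)>0$.
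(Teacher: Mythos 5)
Your proposal follows essentially the same route as the paper: iterate the recursion \eqref{qq2} from $K=k$ down to $K=3$ (with the telescoping $\sigma$-factors and the $2/k!$ constant exactly as you say), invoke \eqref{q2corollary1-1} at the bottom, multiply through by $\sigma_{k-1}$, and restrict the sum over index tuples to the complements $\{1,\ldots,k-1\}\setminus\{j\}$. This is exactly what the paper does.

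The one place you hesitate — the ``scalar combinatorial estimate''
\[
\lambda_1\cdots\hat{\lambda}_j\cdots\lambda_{k-1}\,\sigma_1(\lambda|1,\ldots,\hat{j},\ldots,k-1)\;\geqslant\;C(n,k)\,\sigma_{k-1},\qquad 1\leqslant j\leqslant k-1,
\]
— does not actually require Newton--MacLaurin or the monotonicity of $\sigma_{k-1}(\lambda|i)$, and your worry about the indefinite-sign terms is unnecessary. The paper dispatches it with two one-line facts. First, $\sigma_1(\lambda|1,\ldots,\hat{j},\ldots,k-1)=\lambda_j+(\lambda_k+\cdots+\lambda_n)\geqslant\lambda_j$, because $\lambda_k+\cdots+\lambda_n=\partial^{k-1}\sigma_k/\partial\lambda_1\cdots\partial\lambda_{k-1}>0$ for $\lambda\in\Gamma_k$ (this is the observation in the proof of Lemma \ref{lemn}), and $\lambda_j\geqslant\lambda_{k-1}>0$. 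Second, $\sigma_{k-1}\leqslant C(n,k)\lambda_1\cdots\lambda_{k-1}$, which holds on all of $\Gamma_k$: any $(k-1)$-fold product $\prod_{i\in S}\lambda_i$ either has $S\subset\{1,\ldots,k\}$, in which case it is bounded by $\lambda_1\cdots\lambda_{k-1}$ by the ordering, or it contains $p\geqslant 1$ indices beyond $k$, in which case $|\lambda_n|<(n-k)\lambda_k$ from Lemma \ref{lemn} gives $|\prod_{i\in S}\lambda_i|\leqslant(n-k)^p\lambda_1\cdots\lambda_{k-1-p}\lambda_k^p\leqslant(n-k)^p\lambda_1\cdots\lambda_{k-1}$. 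Combining, the left-hand side is $\geqslant\lambda_1\cdots\lambda_{k-1}\geqslant\sigma_{k-1}/C(n,k)$, which is precisely the inequality you need, and nothing more is required. With that gap closed your argument is complete and coincides with the paper's.
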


\begin{proof}

We note that 
\begin{align}
   q_{k;i}+\lambda_i=\frac{\sigma_k}{\s_{k-1}(\lambda|i)}.
\end{align}
By \eqref{qq2}, \eqref{qq2corollary1} and \eqref{q2corollary1-1},  we have
\begin{equation}
  \begin{aligned}
    -\s_{k-1}\partial^2_{\gamma}q_k\geqslant& -\sum_{i_1=1}^n\frac{\lambda_{i_1}^2\partial_{\gamma}^2q_{k-1;i_1}\s_{k-2}(\lambda|i_1)}{k\frac{\s_{k-1}}{\s_{k-2}(\lambda|i_1)}}\\ \geqslant& -\sum_{i_1=1}^n\sum_{i_2=\{1,\cdots,n\}\setminus\{i_1\}}\frac{\lambda_{i_1}^2\lambda_{i_2}^2\partial_{\gamma}^2q_{k-2;i_1,i_2}\s_{k-3}(\lambda|i_1,i_2)}{k(k-1)\frac{\s_{k-1}}{\s_{k-2}(\lambda|i_1)}\frac{\s_{k-2}(\lambda|i_1)}{\s_{k-3}(\lambda|i_1,i_2)}}\\
    \geqslant &\cdots\\
    \geqslant &-\sum_{i_1=1}^n\cdots\sum_{i_{k-2}=\{1,\cdots,n\}\setminus\{i_1,\cdots,i_{k-3}\}}\frac{2\lambda_{i_1}^2\lambda_{i_2}^2\cdots\lambda_{i_{k-2}}^2\partial_{\gamma}^2 q_{2;i_1,i_2,\cdots,i_{k-2}}\s_1(\lambda|i_1,i_2,\cdots,i_{k-2})}{k!\frac{\s_{k-1}}{\s_1(\lambda|i_1,i_2,\cdots,i_{k-2})}}\\
    \geqslant &\sum_{i_1=1}^n\cdots\sum_{i_{k-2}=\{1,\cdots,n\}\setminus\{i_1,\cdots,i_{k-3}\}}\frac{2\lambda_{i_1}^2\lambda_{i_2}^2\cdots\lambda_{i_{k-2}}^2|[\gamma]_{i_1,\cdots,i_{k-2}}^\perp|^2}{k!\frac{\s_{k-1}}{\s_1(\lambda|i_1,i_2,\cdots,i_{k-2})}}.
\end{aligned}  
\end{equation}
Taking $(i_1,i_2,\cdots,i_{k-2})$ from $(1,\cdots,k-1)$ and denote the remaining index by $j$,
by using the facts that $\s_{k-1}\leqslant C(n,k)\lambda_1\cdots\lambda_{k-1}$ and $\s_1(\lambda|i_1,i_2,\cdots,i_{k-2})\geqslant\lambda_{j}$, we obtain
\begin{align}
     -\s_{k-1}\partial^2_{\gamma}q_k\geqslant C\sum_{1\leqslant i_1\leqslant\dots\leqslant i_{k-2}\leqslant n}\lambda_{i_1}\cdots\lambda_{i_{k-2}}\left|[\gamma]^\perp_{i_1,i_2,\cdots,i_{k-2}}\right|^2.
\end{align}
Summing over $j$ completes the proof.
\end{proof}
\begin{lem}\label{decom}
Assume that $\gamma=(0,\gamma_2,\cdots,\gamma_{k-1},\gamma_k,\dots,\gamma_n)$ satisfying 
\begin{align}\label{perp}
(\gamma_k,\dots,\gamma_n)\perp(\lambda_k,\dots,\lambda_n).
\end{align}
If $\lbrace\lambda_i\rbrace\in \Gamma_k$, then
        \begin{align}
       -\s_{k-1}\partial^2_\gamma q_k\geqslant C\frac{\sum_{j=1}^{k-1}\lambda_1\lambda_2\cdots\lambda_{k-1}\lambda_k^2\gamma_j^2}{\lambda_j^3}+C\lambda_1\lambda_2\cdots\lambda_{k-2}\sum_{p=k}^{n}\gamma_p^2.
   \end{align}
\end{lem}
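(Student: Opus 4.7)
The plan is to apply the previous lemma (the bound on $-\s_{k-1}\partial^2_\gamma q_k$ via the orthogonal parts $|[\gamma]^\perp_{1,\cdots,\hat{j},\cdots,k-1}|^2$) and then, using the hypothesis $\gamma_1=0$ together with the perpendicularity condition \eqref{perp}, compute these orthogonal parts explicitly.

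For each $j\in\{1,\cdots,k-1\}$, the vector $[\gamma]_{1,\cdots,\hat{j},\cdots,k-1}=(\gamma_j,\gamma_k,\cdots,\gamma_n)$ is decomposed against $[\lambda]_{1,\cdots,\hat{j},\cdots,k-1}=(\lambda_j,\lambda_k,\cdots,\lambda_n)$. The key observation is that by \eqref{perp} the inner product collapses to $\langle[\gamma]_{1,\cdots,\hat{j},\cdots,k-1},[\lambda]_{1,\cdots,\hat{j},\cdots,k-1}\rangle=\gamma_j\lambda_j$, so a direct computation yields
\begin{align*}
\bigl|[\gamma]^\perp_{1,\cdots,\hat{j},\cdots,k-1}\bigr|^2 \;=\;\gamma_j^2\cdot\frac{\sum_{p=k}^{n}\lambda_p^2}{\lambda_j^2+\sum_{p=k}^{n}\lambda_p^2}\;+\;\sum_{p=k}^{n}\gamma_p^2.
\end{align*}

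Next I would use Lemma \ref{lemn}, which gives $\lambda_k>0$ and $|\lambda_p|\leqslant (n-k)\lambda_k$ for every $p\geqslant k$, to estimate the first factor. The denominator is bounded by $\lambda_j^2+C\lambda_k^2\leqslant C\lambda_j^2$ (using $\lambda_j\geqslant\lambda_k$ for $j\leqslant k-1$), while the numerator is bounded below by $\lambda_k^2$; hence
\begin{align*}
\frac{\sum_{p=k}^{n}\lambda_p^2}{\lambda_j^2+\sum_{p=k}^{n}\lambda_p^2}\;\geqslant\;\frac{c\lambda_k^2}{\lambda_j^2}.
\end{align*}
Plugging this back and multiplying by $\lambda_1\cdots\hat\lambda_j\cdots\lambda_{k-1}=\lambda_1\cdots\lambda_{k-1}/\lambda_j$ produces the first sum in the target inequality. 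For the second sum, I just keep the term $j=k-1$ in $\sum_j\lambda_1\cdots\hat\lambda_j\cdots\lambda_{k-1}\sum_{p\geqslant k}\gamma_p^2$, which already yields the factor $\lambda_1\cdots\lambda_{k-2}$.

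There is no genuine obstacle here; the entire lemma is an algebraic reorganization of the previous inequality, and the only subtle point is making sure that the ratio of $\sum_{p\geqslant k}\lambda_p^2$ to $\lambda_j^2+\sum_{p\geqslant k}\lambda_p^2$ is controlled from below by $\lambda_k^2/\lambda_j^2$, which is where the positivity and comparability of $\lambda_k,\dots,\lambda_n$ provided by $\{\lambda_i\}\in\Gamma_k$ (through Lemma \ref{lemn}) enters crucially. Once these two ingredients are combined, collecting the pieces finishes the proof.
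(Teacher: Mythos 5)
Your proposal is correct and takes essentially the same route as the paper: apply the preceding lemma's lower bound via $|[\gamma]^\perp_{1,\cdots,\hat{j},\cdots,k-1}|^2$, then exploit the condition $\gamma_1=0$ together with \eqref{perp} to evaluate those orthogonal parts and estimate the resulting ratio through Lemma \ref{lemn}. The only cosmetic difference is that you compute $|[\gamma]^\perp|^2$ directly from the projection formula $|v|^2-\langle v,w\rangle^2/|w|^2$, whereas the paper constructs an explicit orthogonal basis $(\a_i,\b_i)$ of $V_i$ and reads off the decomposition from it; both give the identical expression $\gamma_j^2\,\frac{\sum_{p\geqslant k}\lambda_p^2}{\lambda_j^2+\sum_{p\geqslant k}\lambda_p^2}+\sum_{p\geqslant k}\gamma_p^2$, and your version is arguably the more economical of the two.
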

\begin{proof}
Denote by \[V_i=\lbrace(\gamma_i,\gamma_k,\dots,\gamma_n)\in \R^{n-k+2}\rbrace\] for $2\leqslant i\leqslant k-1$ and \[V=\lbrace (0,\gamma_k,\dots,\gamma_n)|(\gamma_k,\dots,\gamma_n)\perp(\lambda_k,\dots,\lambda_n)\rbrace.\]
For convenience, we name $(0,\lambda_k,\dots,\lambda_n)$ as $\b$. Let $\a_i=(\lambda_i,\lambda_k,\dots,\lambda_n)$, $\b_i=(\sin\t_i,\cos\t_i\frac{\b}{|\b|})$ and $\tan\t_i=-\frac{|\b|}{\lambda_i}$. We note that for any $2\leqslant i\leqslant k-1$, $\b_i\perp\a_i$, $\b_i\perp\b$ and \[V_i=V\oplus\mathcal{L}(\beta_i)\oplus\mathcal{L}(\a_i).\] 
Then we can write any $\t_i=(\lambda_i,\lambda_k,\dots,\lambda_n)\in V_i$ as \begin{align}\label{lami}
    \t_i=(0,\gamma_p,\dots,\gamma_n)-\frac{|\b|}{|\a_i|}\gamma_i\b_i+\frac{\lambda_i}{|\a_i|^2}\gamma_i\a_i
\end{align}
where $\gamma_i\in V$.
Plugging \eqref{lami} into \eqref{sigma_k-1Hessianqk}, we obtain
\begin{align}
    -\s_{k-1}\partial^2_\gamma q_k\geqslant& C\sum_{j=2}^{k-1}\lambda_1\cdots\hat{\lambda}_j\cdots\lambda_{k-1}\f\sum_{k\leqslant p\leqslant n}\gamma_p^2+\frac{|\b|^2}{|\a_j|^2}\gamma_j^2\r\\
    \geqslant& C\sum_{j=2}^{k-1}\lambda_1\cdots\hat{\lambda}_j\cdots\lambda_{k-1}\f\sum_{k\leqslant p\leqslant n}\gamma_p^2+c_0\frac{\lambda_k^2}{\lambda_j^2}\gamma_j^2\r\nonumber\\
    \geqslant& C\frac{\sum_{2\leqslant j\leqslant k-1}\lambda_1\lambda_2\cdots\lambda_{k-1}\lambda_k^2\gamma_j^2}{\lambda_j^3}+C\lambda_1\lambda_2\cdots\lambda_{k-2}\sum_{k\leqslant p\leqslant n}\gamma_p^2
\end{align}
where we use Lemma \ref{lemn} in the second inequality.
\end{proof}

For convenience, we perform some straightforward calculations for $q_k$ here. 
\begin{lem}\label{qk,con}
\begin{align}
   \frac{\frac{\partial^2 q_k}{\partial\lambda_p\partial\lambda_q }}{q_k}=\frac{\frac{\partial^2 \s_k}{\partial\lambda_p\partial\lambda_q}}{\s_k}-\frac{\frac{\partial \s_k}{\partial\lambda_p}}{\s_k}\frac{\frac{\partial \s_{k-1}}{\partial\lambda_q}}{\s_{k-1}}-\frac{\frac{\partial \s_k}{\partial\lambda_q}}{\s_k}\frac{\frac{\partial \s_{k-1}}{\partial\lambda_p}}{\s_{k-1}}-\frac{\frac{\partial^2 \s_{k-1}}{\partial\lambda_p\partial\lambda_q}}{\s_{k-1}}+2\frac{\frac{\partial \s_{k-1}}{\partial\lambda_p}\frac{\partial \s_{k-1}}{\partial\lambda_q}}{\s_{k-1}^2}.
\end{align}
\end{lem}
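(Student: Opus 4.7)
The proof is a direct computation, so the plan is simply to organize it cleanly using logarithmic differentiation rather than applying the quotient rule twice. The motivation is that the target identity has the structure of a logarithmic Hessian: every term carries a normalizing factor of $\s_k$ or $\s_{k-1}$ in the denominator, which is characteristic of second derivatives of $\log \s_k$.

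First, I would write $\log q_k = \log \s_k - \log \s_{k-1}$ (valid since $\{\lambda_i\}\in\Gamma_k$, so both $\s_k$ and $\s_{k-1}$ are positive) and differentiate once to get
\begin{equation*}
\frac{\partial_p q_k}{q_k} \;=\; \frac{\partial_p \s_k}{\s_k} \;-\; \frac{\partial_p \s_{k-1}}{\s_{k-1}} .
\end{equation*}
Differentiating this identity in $\lambda_q$ and using $\partial_q(\partial_p f/f) = \partial_p\partial_q f/f - (\partial_p f)(\partial_q f)/f^2$ gives
\begin{equation*}
\frac{\partial_p\partial_q q_k}{q_k} - \frac{\partial_p q_k\,\partial_q q_k}{q_k^2}
= \frac{\partial_p\partial_q\s_k}{\s_k} - \frac{\partial_p\s_k\,\partial_q\s_k}{\s_k^2}
- \frac{\partial_p\partial_q\s_{k-1}}{\s_{k-1}} + \frac{\partial_p\s_{k-1}\,\partial_q\s_{k-1}}{\s_{k-1}^2} .
\end{equation*}

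Next I would substitute the first-order identity into the square on the left, obtaining
\begin{equation*}
\frac{\partial_p q_k\,\partial_q q_k}{q_k^2}
= \frac{\partial_p\s_k\,\partial_q\s_k}{\s_k^2}
- \frac{\partial_p\s_k\,\partial_q\s_{k-1}}{\s_k\,\s_{k-1}}
- \frac{\partial_q\s_k\,\partial_p\s_{k-1}}{\s_k\,\s_{k-1}}
+ \frac{\partial_p\s_{k-1}\,\partial_q\s_{k-1}}{\s_{k-1}^2},
\end{equation*}
and add this back to the previous display. The two occurrences of $\partial_p\s_k\,\partial_q\s_k/\s_k^2$ cancel, while the two occurrences of $\partial_p\s_{k-1}\,\partial_q\s_{k-1}/\s_{k-1}^2$ combine into the factor $2$ appearing in the statement. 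Collecting the surviving terms yields exactly the claimed identity, which finishes the proof.

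There is no real obstacle here: the only thing to watch is bookkeeping of signs and symmetry in the indices $p,q$, since the two cross terms $\partial_p\s_k\,\partial_q\s_{k-1}$ and $\partial_q\s_k\,\partial_p\s_{k-1}$ must both appear (and both do, with the correct sign, after the substitution). The identity is a purely algebraic consequence of $q_k=\s_k/\s_{k-1}$ and uses no properties of the elementary symmetric polynomials beyond positivity on $\Gamma_k$.
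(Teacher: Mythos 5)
Your proof is correct and arrives at the same identity as the paper by what is essentially the same direct computation; the only difference is organizational. The paper applies the quotient rule twice to $q_k=\s_k/\s_{k-1}$ and then divides by $q_k$ at the end, whereas you differentiate $\log q_k=\log\s_k-\log\s_{k-1}$ twice and then restore the missing $(\partial_p q_k)(\partial_q q_k)/q_k^2$ term using the first-order identity. Both routes are elementary and of comparable length; your logarithmic bookkeeping does make the appearance of the factor $2$ and the two mixed terms a bit more transparent, but there is no substantive difference in the argument.
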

\begin{proof}
 \begin{equation}
    \begin{aligned}
    \frac{\frac{\partial^2 q_k}{\partial\lambda_p\partial\lambda_q}}{q_k}=&\frac{\partial\f\frac{\frac{\partial \s_k}{\partial\lambda_p}}{\s_{k-1}}-\frac{\frac{\partial \s_{k-1}}{\partial\lambda_p}\s_k}{\s_{k-1}^2}\r}{q_k\partial\lambda_q}\\
    =&\frac{ \frac{\frac{\partial^2 \s_k}{\partial\lambda_p\partial\lambda_q}}{\s_{k-1}}-\frac{\frac{\partial \s_k}{\partial\lambda_p}\frac{\partial \s_{k-1}}{\partial\lambda_q}}{\s_{k-1}^2}-\frac{\frac{\partial \s_k}{\partial\lambda_q}\frac{\partial \s_{k-1}}{\partial\lambda_p}}{\s_{k-1}^2}-\frac{\frac{\partial^2 \s_{k-1}}{\partial\lambda_p\partial\lambda_q}\s_k}{\s_{k-1}^2}+2\frac{\frac{\partial \s_{k-1}}{\partial\lambda_p}\frac{\partial \s_{k-1}}{\partial\lambda_q}\s_k}{\s_{k-1}^3}}{q_k}\\
    =&\frac{\frac{\partial^2 \s_k}{\partial\lambda_p\partial\lambda_q}}{\s_k}-\frac{\frac{\partial \s_k}{\partial\lambda_p}}{\s_k}\frac{\frac{\partial \s_{k-1}}{\partial\lambda_q}}{\s_{k-1}}-\frac{\frac{\partial \s_k}{\partial\lambda_q}}{\s_k}\frac{\frac{\partial \s_{k-1}}{\partial\lambda_p}}{\s_{k-1}}-\frac{\frac{\partial^2 \s_{k-1}}{\partial\lambda_p\partial\lambda_q}}{\s_{k-1}}+2\frac{\frac{\partial \s_{k-1}}{\partial\lambda_p}\frac{\partial \s_{k-1}}{\partial\lambda_q}}{\s_{k-1}^2}.
\end{aligned}
\end{equation}   
\end{proof}

\section{A concavity inequality}\label{sec:3}
In this section, we define
\begin{align*}
\dot{\s}_k^{pp}(\lambda)=\frac{\partial \s_k}{\partial\lambda_p},\quad\ddot{\s}_k^{pp,qq}(\lambda)=\frac{\partial^2 \s_k}{\partial\lambda_p\partial\lambda_q}.
\end{align*}
Subsequently, we provide the proof of the crucial lemma \ref{iq sc}.
\begin{proof}
   First we note that $\log\s_k$ is concave. From Property \eqref{con}, we have for any vector $\xi=( \xi_1,\cdots,\xi_n)$ in $\mathbb{R}^n$
    \begin{equation}\label{logsk}
   \begin{aligned}
-\partial^2_{\xi}\log\s_k=&-\partial^2_{\xi}\left(\log q_k+\log q_{k-1}+\cdots+\log q_2+\log\s_1\right)\\
=&-\frac{\partial^2_{\xi}q_k}{q_k}+(\partial_{\xi}\log q_k)^2-\frac{\partial^2_{\xi}q_{k-1}}{q_{k-1}}+(\partial_{\xi}\log q_{k-1})^2+\cdots+(\partial_{\xi}\log\s_1)^2\\
\geqslant &\sum_{i=1}^{k}(\partial_{\xi}\log q_i)^2\geqslant 0.
    \end{aligned}     
    \end{equation}
    Besides, we denote by $\hat{\xi}=(0,\xi_2,\dots,\xi_n)$ and choose $K\geqslant \frac{1}{\e}$, then  
  \begin{eqnarray}\label{xi1}
     &&-\partial^2_{\xi}\log\s_k+(K-1)\frac{(\sum_i\s_k^{ii}\xi_i)^2}{\s_k^2}\nonumber\\
&\geqslant&-\frac{\partial^2_{\xi}q_k}{q_k}+(\partial_{\xi}\log \s_k-\partial_{\xi}\log\s_{k-1})^2 -\partial^2_{\xi}\log\s_{k-1}+\f\frac{1}{\e}-1\r(\partial_{\xi}\log \s_k)^2\nonumber\\    &\geqslant&-\frac{\partial^2_{\xi}q_k}{q_k}+(1-\epsilon)(\partial_{\xi}\log \s_{k-1})^2-\partial^2_{\xi}\log\s_{k-1}\nonumber\\
&=&-\frac{\partial^2_{\xi}q_k}{q_k}+(1-\epsilon)\f\frac{\dot{\s}_{k-1}^{11} \xi_1}{\s_{k-1}}+\partial_{\hat{\xi}}\log \s_{k-1}\r^2-\partial^2_{\hat{\xi}}\log\s_{k-1}\nonumber\\
&& +\f\frac{\dot{\s}_{k-1}^{11} \xi_1}{\s_{k-1}}\r^2+2\sum_{p>1}\frac{\dot{\s}_{k-1}^{11}\dot{\s}_{k-1}^{pp}-\ddot{\s}_{k-1}^{11,pp}\s_{k-1}}{\s_{k-1}^2}\nonumber\\
&\geqslant&-\frac{\partial^2_{\xi}q_k}{q_k}+(1-\epsilon)\f\frac{\dot{\s}_{k-1}^{11} \xi_1}{\s_{k-1}}+\sum_{i=1}^{k-1}\partial_{\hat{\xi}}\log q_i\r^2+\sum_{i=1}^{k-1}\f\partial_{\hat{\xi}}\log q_i\r^2\nonumber\\
&& +\f\frac{\dot{\s}_{k-1}^{11} \xi_1}{\s_{k-1}}\r^2+2\sum_{\a>1}\frac{\dot{\s}_{k-1}^{11}\dot{\s}_{k-1}^{\a\a}-\ddot{\s}_{k-1}^{11,\a\a}\s_{k-1}}{\s_{k-1}^2} \xi_1\xi_{\a}\nonumber\\
&\geqslant &-\frac{\partial^2_{\xi}q_k}{q_k}
+\f1+(1-\epsilon)\frac{1}{k}\r\f\frac{\dot{\s}_{k-1}^{11} \xi_1}{\s_{k-1}}\r^2+2\sum_{\a>1}\frac{\dot{\s}_{k-1}^{11}\dot{\s}_{k-1}^{\a\a}-\ddot{\s}_{k-1}^{11,\a\a}\s_{k-1}}{\s_{k-1}^2} \xi_1\xi_{\a}
  \end{eqnarray}
  by choosing $K=\frac{1}{\e}$
  Here we estimate the second and the third inequality in \eqref{xi1} as 
  \begin{align}
  \f\frac{\dot{\s}_{k-1}^{11} \xi_1}{\s_{k-1}}+\sum_{i=1}^{k-1}\partial_{\hat{\xi}}\log q_i\r^2+\sum_{i=1}^{k-1}\f\partial_{\hat{\xi}}\log q_i\r^2\geqslant \frac{1}{k}\f\frac{\dot{\s}_{k-1}^{11} \xi_1}{\s_{k-1}}\r^2.    
  \end{align}
By 
  \begin{equation}\label{sl}
      \s_l=\dot{\s}_l^{11}\lambda_1+\s_l(\lambda|1),
  \end{equation}
  we have 
  \begin{equation}
      \frac{\dot{\s}_{k-1}^{11}}{\s_{k-1}}=\frac{\s_{k-1}-\s_{k-1}(\lambda|1)}{\lambda_1\s_{k-1}}
  \end{equation}
  and 
  \begin{equation}
    |\s_{k-1}(\lambda|1)|\leq C(n,k)\lambda_2\lambda_3\cdots\lambda_k\leqslant C\frac{\s_{k-1}}{\lambda_1}
  \end{equation}
  where we use Property \eqref{k1} in the last inequality.
Thus we obtain 
  \begin{align}\label{k-1,1}
      \frac{\dot{\s}_{k-1}^{11}}{\s_{k-1}}\geqslant \frac{1}{\lambda_1}-\frac{C}{\lambda_1^2}. 
  \end{align}
  Since 
\begin{equation}\label{s}
  \begin{aligned}
   \frac{\f\dot{\s}_{k-1}^{11}\dot{\s}_{k-1}^{\a\a}-\ddot{\s}_{k-1}^{11,\a\a}\s_{k-1}\r \xi_1\xi_{\a}}{\s_{k-1}^2} =&\frac{\left(\s_{k-2}(\lambda|1,\a)^2-\s_{k-1}(\lambda|1,\a)\s_{k-3}(\lambda|1,\a)\right) \xi_1\xi_{\a}}{\s_{k-1}^2},
\end{aligned}  
\end{equation}
for any $1<i\leqslant k-1$, we have 
\begin{align}\label{s11}
    \s_{k-2}(\lambda|1,i)\leqslant C(n,k)\frac{\lambda_1\lambda_2\cdots\lambda_k}{\lambda_1\lambda_i}\leqslant C(n,k)\frac{\s_{k-1}\lambda_k}{\lambda_1\lambda_i}
\end{align}
and 
\begin{align}\label{s12}
   |\s_{k-1}(\lambda|1,i)|\s_{k-3}(\lambda|1,i) \leqslant C(n,k)\frac{\lambda_1\cdots\lambda_{k-1}\lambda_k^2}{\lambda_1\lambda_i}\frac{\lambda_1\cdots\lambda_{k-1}}{\lambda_1\lambda_i}\leqslant C(n,k)\f\frac{\s_{k-1}\lambda_k}{\lambda_1\lambda_i}\r^2.
\end{align}
For any $k\leqslant p\leqslant n$, we have 
\begin{align}\label{s21}
    \s_{k-2}(\lambda|1,p)\leqslant C(n,k)\frac{\lambda_1\lambda_2\cdots\lambda_{k-1}}{\lambda_1}\leqslant C(n,k)\frac{\s_{k-1}}{\lambda_1}
\end{align}
 and 
 \begin{align}\label{s22}
     |\s_{k-1}(\lambda|1,p)|\s_{k-3}(\lambda|1,p)\leqslant C(n,k)\frac{\lambda_1\cdots\lambda_{k-1}\lambda_k}{\lambda_1}\frac{\lambda_1\cdots\lambda_{k-2}}{\lambda_1}\leqslant C(n,k)\frac{\s_{k-1}^2}{\lambda_1^2}.
 \end{align}
 Here we use Property \eqref{k1} and Lemma \ref{lemn} in \eqref{s11}, \eqref{s12}, \eqref{s21} and \eqref{s22}. 
 Then for any $1<i\leqslant k-1$, \eqref{s} becomes 
 \begin{align}\label{s,1}
  -2\frac{\f\dot{\s}_{k-1}^{11}\dot{\s}_{k-1}^{ii}-\ddot{\s}_{k-1}^{11,ii}\s_{k-1}\r \xi_1\xi_{i}}{\s_{k-1}^2}\geqslant -2\frac{C\lambda_k^2| \xi_1\xi_i|}{\lambda_1^2\lambda_i^2}\geqslant -\frac{\e \xi_1^2}{4(n-1)\lambda_1^2}-\frac{4(n-1)C^2\lambda_k^4\xi_i^2}{\e\lambda_1^2\lambda_i^4}.
 \end{align} 
Then for any $k\leqslant p\leqslant n$, \eqref{s} becomes 
 \begin{align}\label{s,2}
 -2\frac{\f\dot{\s}_{k-1}^{11}\dot{\s}_{k-1}^{pp}-\ddot{\s}_{k-1}^{11,pp}\s_{k-1}\r \xi_1\xi_{p}}{\s_{k-1}^2}\geqslant -2\frac{C| \xi_1\xi_p|}{\lambda_1^2}\geqslant-\frac{\e \xi_1^2}{4(n-1)\lambda_1^2}-\frac{4(n-1)C^2\xi_p^2}{\e\lambda_1^2}.
 \end{align}
Here we use Cauchy-Schwartz inequality in \eqref{s,1} and \eqref{s,2}.
 Recall that 
 \begin{align}\label{la 11aa}
\dot{\s}_k^{11}=\lambda_\a\ddot{\s_k}^{11,\a\a}+\s_{k-1}(\lambda|1,\a),
 \end{align}
 \begin{align}  \dot{\s}_k^{\a\a}=\lambda_1\ddot{\s_k}^{11,\a\a}+\s_{k-1}(\lambda|1,\a).   
 \end{align}        
If $\s_{k-1}(\lambda|1,\a)\leqslant0$, then due to \eqref{la 11aa} $\lambda_{\a}>0$ and 
\begin{align}\label{sk-1<0}
\frac{\lambda_{\a}}{\lambda_1}\dot{\s}_k^{\a\a}=\lambda_{\a}\ddot{\s_k}^{11,\a\a}+\frac{\lambda_{\a}}{\lambda_1}\s_{k-1}(\lambda|1,\a)\geqslant\dot{\s}_k^{11}.
\end{align}
If $\s_{k-1}(\lambda|1,\a)>0$, using the Maclaurin's inequality, we have 
\begin{align}\label{maciq}
    \s_{k-1}(\lambda|1,\a)^{\frac{k-2}{k-1}}\leqslant\s_{k-2}(\lambda|1,\a).
\end{align}
Since
\begin{align*}
   \s_{k-1}(\lambda|1,\a)\leqslant C\lambda_2\lambda_3\cdots\lambda_{k-1}\lambda_k\leqslant C\lambda_1^{k-2}C(A),
\end{align*}
multiplying it with \eqref{maciq} we obtain
\begin{align*}
 \s_{k-1}(\lambda|1,\a)\leqslant C\lambda_1^{-\frac{1}{k-1}}(\lambda_1\ddot{\s_k}^{11,\a\a}).   
\end{align*}
Then using Property \eqref{fk1}, we have 
\begin{align}\label{s_k-1>0,>}
(\frac{\lambda_{\a}}{\lambda_1}+C\lambda_1^{-\frac{1}{k-1}})\dot{\s}_k^{\a\a}\geqslant(\frac{\lambda_{\a}}{\lambda_1}+C\lambda_1^{-\frac{1}{k-1}})\lambda_1\ddot{\s_k}^{11,\a\a}\geqslant \dot{\s}_k^{11}\geqslant\frac{k\s_k}{n\lambda_1},\quad \text{for }\lambda_{\a}>0,
\end{align}
and 
\begin{align}\label{s_k-1>0,<}
C\lambda_1^{-\frac{1}{k-1}}\dot{\s}_k^{\a\a}\geqslant(\frac{\lambda_{\a}}{\lambda_1}+C\lambda_1^{-\frac{1}{k-1}})\lambda_1\ddot{\s_k}^{11,\a\a}\geqslant \dot{\s}_k^{11}\geqslant\frac{k\s_k}{n\lambda_1},\quad \text{for }\lambda_{\a}\leqslant 0.
\end{align}
We now divide it into two cases. We assume that there exists some positive constant $C$.

If $\lambda_{\a}\leqslant C\lambda_1^{\frac{k-2}{k-1}}$, then
by \eqref{sk-1<0}, \eqref{s_k-1>0,>} and \eqref{s_k-1>0,<}, we obtain that 
\begin{align}
    C\lambda_1^{-\frac{1}{k-1}}\dot{\s}_k^{\a\a}\geqslant\frac{1}{\lambda_1}.
\end{align}
For any $k\leqslant p\leqslant n$, $|\lambda_p|\leqslant C(A)$ by Lemma \ref{upperboundforkappak}. Then \eqref{s,2} becomes 
 \begin{align}\label{s,2'}
 -2\frac{\f\dot{\s}_{k-1}^{11}\dot{\s}_{k-1}^{pp}-\ddot{\s}_{k-1}^{11,pp}\s_{k-1}\r \xi_1\xi_{p}}{\s_{k-1}^2}\geqslant-\frac{\e \xi_1^2}{4(n-1)\lambda_1^2}-\frac{C\s_k^{pp}\xi_p^2}{(\e\lambda_1^{\frac{1}{k-1}})\lambda_1}.
 \end{align}
For any $1<i\leqslant k-1$ and $\lambda_i\leqslant C\lambda_1^{\frac{k-2}{k-1}}$, \eqref{s,1} becomes 
\begin{align}
  -2\frac{\f\dot{\s}_{k-1}^{11}\dot{\s}_{k-1}^{ii}-\ddot{\s}_{k-1}^{11,ii}\s_{k-1}\r \xi_1\xi_{i}}{\s_{k-1}^2}\geqslant -\frac{\e \xi_1^2}{4(n-1)\lambda_1^2}-\frac{4(n-1)C^2\xi_i^2}{\e\lambda_1^2}\geqslant-\frac{\e \xi_1^2}{4(n-1)\lambda_1^2}-\frac{C\dot{\s}_k^{ii}\xi_i^2}{(\e\lambda_1^{\frac{1}{k-1}})\lambda_1}.
 \end{align} 
 If $\lambda_{\a}\geqslant C\lambda_1^{\frac{k-2}{k-1}}$, then
by \eqref{sk-1<0} and \eqref{s_k-1>0,>}, we obtain that 
\begin{align}
    C\frac{\lambda_i}{\lambda_1}\dot{\s}_k^{ii}\geqslant\frac{\s_k}{\lambda_1}.
\end{align}
For any $1<i\leqslant k-1$ and $\lambda_i\geqslant C\lambda_1^{\frac{k-2}{k-1}}$, \eqref{s,1} becomes 
\begin{align}
  -2\frac{\f\dot{\s}_{k-1}^{11}\dot{\s}_{k-1}^{ii}-\ddot{\s}_{k-1}^{11,ii}\s_{k-1}\r \xi_1\xi_{i}}{\s_{k-1}^2}\geqslant-\frac{\e \xi_1^2}{4(n-1)\lambda_1^2}-\frac{C(A)\dot{\s}_k^{ii}\xi_i^2}{\e\lambda_1^2\lambda_i^3}\geqslant-\frac{\e \xi_1^2}{4(n-1)\lambda_1^2}-\frac{C\dot{\s}_k^{ii}\xi_i^2}{(\e\lambda_1^{\frac{1}{k-1}})\lambda_1}.
 \end{align} 
By direct calculation, \eqref{xi1} implies that
\begin{equation}\label{xi2}
    \begin{aligned}
         &-\sum_{p\neq q}\frac{\s_k^{pp,qq}\xi_p\xi_q}{\s_k}+K\frac{(\sum_i\s_k^{ii}\xi_i)^2}{\s_k^2}+2\sum_{i>1}\frac{\s_k^{ii}\xi_i^2}{ (\lambda_1+A+1)\s_k}\\
         \geqslant &-\frac{\partial^2_{\xi}q_k}{q_k}
+\f1+(1-\epsilon)\frac{1}{k}\r\f\frac{\dot{\s}_{k-1}^{11} \xi_1}{\s_{k-1}}\r^2-\frac{\e}{4}\frac{ \xi_1^2}{\lambda_1^2}+\f2-\frac{C}{\epsilon\lambda_1^{\frac{1}{k-1}}}-\frac{C(A)}{\lambda_1^2}\r\sum_{i>1}\frac{\s_k^{ii}\xi_i^2}{\lambda_1\s_k}\\
\geqslant& -\frac{\partial^2_{\xi}q_k}{q_k}
+\f1+(1-\epsilon)\frac{1}{k}-\e\r\f1-\frac{C_1(A)}{\lambda_1}\r^2\frac{ \xi_1^2}{\lambda_1^2}+\f2-\frac{C_2}{\epsilon\lambda_1^{\frac{1}{k-1}}}-\frac{C_3(A)}{\lambda_1^2}\r\sum_{i>1}\frac{\s_k^{ii}\xi_i^2}{\lambda_1\s_k}
    \end{aligned}
\end{equation}
where we use \eqref{k-1,1} in the last inequality.
We choose $\epsilon=\frac{1}{(k+1)^2}$ and assume that $\lambda_1>\max\left\lbrace (k+3)C_1(A),\frac{C_3(A)}{C_2(k+1)^2}\right\rbrace.$
Then \eqref{xi2} becomes 
\begin{equation}\label{iq c0}
    \begin{aligned}
         &-\sum_{p\neq q}\frac{\s_k^{pp,qq}\xi_p\xi_q}{\s_k}+K\frac{(\sum_i\s_k^{ii}\xi_i)^2}{\s_k^2}+2\sum_{i>1}\frac{\s_k^{ii}\xi_i^2}{ (\lambda_1+A+1)\s_k}\\
\geqslant& \frac{k+2}{k+1}\f1-\frac{C_1(A)}{\lambda_1}\r^2\frac{ \xi_1^2}{\lambda_1^2}+\f2-\frac{C_2}{\epsilon\lambda_1^{\frac{1}{k-1}}}-\frac{C_3(A)}{\lambda_1^2}\r\sum_{i>1}\frac{\s_k^{ii}\xi_i^2}{\lambda_1\s_k}\\
\geqslant& \frac{(k+2)^2}{(k+1)(k+3)}\frac{ \xi_1^2}{\lambda_1^2}+\f2-2(k+1)^2\frac{C_2}{\lambda_1^{\frac{1}{k-1}}}\r\sum_{i>1}\frac{\s_k^{ii}\xi_i^2}{\lambda_1\s_k}.
    \end{aligned}
\end{equation}
Recall that 
\begin{align}
    \s_k=\lambda_1\dot{\s}_k^{11}+\s_k(\lambda|1).
\end{align}
We note that if 
\begin{align}
    \s_k(\lambda|1)\geqslant-\frac{\s_k}{2(k+2)^2-1},
\end{align}
then \eqref{iq c0} implies the conclusion \eqref{main,iq} by assuming $\lambda_1\geq \f C_2(k+1)^2\r^{k-1}$ and choosing $\d_0=\frac{1}{2(k+1)(k+3)}$.

In the subsequent content, we denote by $c_0=\frac{1}{2(k+2)^2-1}$ and suppose that
\begin{align}\label{sk1 c0}
    \s_k(\lambda|1)<-c_0\s_k.
\end{align}
Under this assumption, we have the following claims.

\emph{Claim 1:}\[\lambda_k\leqslant C(n,k)|\lambda_n|.\]
From \eqref{kk,n}, 
\begin{equation}\label{k1,n}
    \lambda_1\cdots\lambda_k\leqslant 2\s_k 
\end{equation}
or 
\begin{equation}
    \lambda_1\cdots\lambda_k\leqslant 2 C(n,k)\lambda_1\cdots\lambda_{k-1}|\lambda_n|.
\end{equation}
If we assume that \eqref{k1,n} holds, then 
\begin{align}\label{sk|1}
    |\s_k(\lambda|1)|\leqslant C(n,k)\lambda_2\cdots\lambda_k^2\leqslant C'(n,k,A)\frac{\s_k}{\lambda_1}<c_0\s_k
\end{align}
by assuming $\lambda_1>\frac{C'(n,k,A)}{c_0}$, which is contradict to our assumption above. Thus, we complete the proof of the first claim.

\emph{Claim 2:} there exists some $c_1,c_2>0$ such that $-\s_{k+1}\geqslant c_1\lambda_1\cdots\lambda_{k-1}\lambda_k^2\geqslant c_2\lambda_1$.

By Claim 1, \eqref{sk|1} and \eqref{sk1 c0}, we have
\begin{align}\label{l1 c2}
\lambda_1\cdots\lambda_{k-1} \lambda_n^2\geqslant c\lambda_1\cdots\lambda_{k-1} \lambda_k^2\geqslant \frac{cc_0\s_k}{C(n,k)}\lambda_1
\end{align}
and denote $c_2=\frac{cc_0}{C(n,k)}$.
Since $\s_k(\lambda|1)<0$, we have $\lambda_n<0$ and from \eqref{sl}
\begin{align}
   \s_k(\lambda|n) =\s_k-\lambda_n\dot{\s}_k^{nn}>0.
\end{align}
Using Property \eqref{k1}, we get 
\begin{align}\label{sk,n}
    \dot{\s}_k^{nn}>\lambda_1\cdots\lambda_{k-1}.
\end{align}
We suppose that there exist some large $M>0$ (to be chosen later) and $1<l\leqslant k-1$, such that $\lambda_1> M^k$, $\lambda_l> M$ and $\lambda_{l+1}\leqslant M$ in the following proof.
By \eqref{sl}, \eqref{sk,n} and Property \eqref{P1}, we have
\begin{eqnarray}\label{sk+1}
   \begin{aligned}
\lambda_1\cdots\lambda_{k-1} \lambda_n^2\leqslant&\dot{\s}_k^{nn}\lambda_n^2\leqslant\sum_{p=l+1}^n\dot{\s}_k^{pp}\lambda_p^2\\
=& \sum_{i=1}^{l}\lambda_i(\lambda_i\dot{\s}_k^{ii}+\s_k(\lambda|i))+\sum_{p=l+1}^n\lambda_p\s_k-\sum_{i=1}^{l}(\lambda_i\dot{\s}_{k+1}^{ii}+\s_{k+1}(\lambda|i))\\
&-(k+1-l)\s_{k+1}-\sum_{i=1}^{l}\dot{\s}_k^{ii}\lambda_i^2\\
=&\sum_{p=l+1}^n\lambda_p\s_k-\sum_{i=1}^{l}\s_{k+1}(\lambda|i)-(k+1-l)\s_{k+1}.
\end{aligned} 
\end{eqnarray}
We now show that by using Claim 1 for any $1\leqslant i \leqslant l$
\begin{align}\label{sk+1,i}
    |\s_{k+1}(\lambda|i)|\leqslant \frac{C(n,k)\lambda_1\cdots\lambda_{k-1}\lambda_k^3}{\lambda_i}\leqslant \frac{C_1(A)}{M}\lambda_1\cdots\lambda_{k-1}\lambda_n^2.
\end{align}
Applying \eqref{l1 c2}, we have
\begin{align}\label{kpsk}
 \sum_{p=l+1}^n\lambda_p\s_k\leqslant c_1M\leqslant \frac{c_1\lambda_1}{M}\leqslant \frac{c\lambda_1\cdots\lambda_{k-1} \lambda_n^2}{M}.   
\end{align}

Then by choosing $M\geqslant \max\lbrace\frac{\s_k}{c_1},2(1+(k-1)C_1\rbrace)$, we obtain
\begin{align}
  -\s_{k+1}\geqslant \frac{\lambda_1\cdots\lambda_{k-1} \lambda_n^2}{4}  ,
\end{align}
which complete the proof of \emph{Claim 2}.

\emph{Claim 3:} There exists some $c_3,c_4>0$ such that $c_3\frac{\lambda_1\cdots\lambda_k^2}{\lambda_1^2}\leqslant \dot{\s}_k^{11}\leqslant c_4\frac{\lambda_1\cdots\lambda_k^2}{\lambda_1^2}$.

Combining Claim 2, we have 
\begin{align}\label{sk,1+}
-\s_k(\lambda|1)=\frac{-\s_{k+1}-\s_{k+1}(\lambda|1)}{\lambda_1}\leqslant \frac{-\s_{k+1}+C\lambda_2\cdots\lambda_k^3}{\lambda_1}\leqslant \frac{-\s_{k+1}(1+\frac{C}{\lambda_1})}{\lambda_1}.
\end{align}
For the same reason
\begin{align}\label{sk(k1)}
-\s_k(\lambda|1)\geqslant \frac{-\s_{k+1}(1-\frac{C}{\lambda_1})}{\lambda_1}.
\end{align}
Then using the assumption $\s_k(\lambda|1)<-c_0$ and Claim 2, we have
\begin{equation}\label{sk1,l}
    \begin{aligned}
        \dot{\s}_k^{11}=&\frac{\s_k-\s_k(\lambda|1)}{\lambda_1}\geqslant \frac{-\s_k(\lambda|1)}{\lambda_1}\\
        \geqslant&c_4\frac{\lambda_2\cdots\lambda_k^2}{\lambda_1}.
    \end{aligned}
\end{equation}
For the same reason, we have 
\begin{align}\label{sk1,u}
    \dot{\s}_k^{11}
        \leqslant c_3\frac{\lambda_2\cdots\lambda_k^2}{\lambda_1}
\end{align}
which complete the proof of \emph{Claim 3}.

 We denote by $\a=(\lambda_1,\dots,\lambda_n)$, $e_1=(1,0,\dots,0)$ and $\gamma=(0,\gamma_2,\dots\gamma_k,\dots,\gamma_n).$
Now we assume that 
\begin{align}
 \xi=(1+a)\frac{\a \xi_1}{\lambda_1}-a\xi_1e_1+\gamma   
\end{align}
and 
\begin{align}\label{perp 3}
   (\gamma_k,\dots,\gamma_n) \perp(\lambda_k,\dots,\lambda_n).
\end{align}
Then by Lemma \ref{qk,con} the first term of the right side of \eqref{xi2} becomes 
\begin{equation}\label{qk}
\begin{aligned}
    -\frac{\partial^2_{\xi}q_k}{q_k}&=-(-a\xi_1e_1+\gamma)_p\frac{\ddot{q}_k^{pp,qq}}{q_k}(-a\xi_1e_1+\gamma)_q\\
    =&-2a^2\xi_1^2\frac{\dot{\s}_{k-1}^{11}}{\s_{k-1}}\f\frac{\dot{\s}_{k-1}^{11}}{\s_{k-1}}-\frac{\dot{\s}_k^{11}}{\s_k}\r-\frac{\partial^2_{\gamma}q_k}{q_k}\\
&+2a\xi_1\gamma_\a\f\frac{\ddot{\s}_k^{11,\a\a}}{\s_k}-\frac{\dot{\s}_k^{\a\a}}{\s_k}\frac{\dot{\s}_{k-1}^{11}}{\s_{k-1}}-\frac{\dot{\s}_k^{11}}{\s_k}\frac{\dot{\s}_{k-1}^{\a\a}}{\s_{k-1}}+2\frac{\dot{\s}_{k-1}^{11}\dot{\s}_{k-1}^{\a\a}}{\s_{k-1}^2}-\frac{\ddot{\s}_{k-1}^{11,\a\a}}{\s_{k-1}}\r.
\end{aligned}    
\end{equation}
By direct calculation, we obtain following equations:
\begin{equation}
\begin{aligned}
 \frac{\ddot{\s}_k^{11,\a\a}}{\s_k}-\frac{\dot{\s}_k^{\a\a}}{\s_k}\frac{\s_{k-1}^{11}}{\s_{k-1}}=&\frac{\dot{\s}_k^{\a\a}-\s_{k-1}(\lambda|1\a)}{\lambda_1\s_k}-\frac{\dot{\s}_k^{\a\a}}{\s_k}\frac{\s_{k-1}-\s_{k-1}(\lambda|1)}{\lambda_1\s_{k-1}}\\
=&-\frac{\s_{k-1}(\lambda|1\a)}{\lambda_1\s_k}+\frac{\dot{\s}_k^{\a\a}}{\s_k}\frac{\dot{\s}_k^{11}}{\lambda_1\s_{k-1}},
\end{aligned}   
\end{equation}
and
\begin{align}\label{sk-1k-3}
    \frac{\ddot{\s}_{k-1}^{11,\a\a}}{\s_{k-1}}-\frac{\dot{\s}_{k-1}^{\a\a}\dot{\s}_{k-1}^{11}}{\s_{k-1}^2}=\frac{\s_{k-2}^2(\lambda|1\a)-\s_{k-1}(\lambda|1\a)\s_{k-3}(\lambda|1\a)}{\s_{k-1}^2}.
\end{align}
Hence, for any $2\leqslant i\leqslant k-1$, we have
\begin{align}\label{sk-1k-3,i}
    \left|\frac{\ddot{\s}_{k-1}^{11,ii}}{\s_{k-1}}-\frac{\dot{\s}_{k-1}^{ii}\dot{\s}_{k-1}^{11}}{\s_{k-1}^2}\right|\leqslant \frac{C(n,k)\lambda_k^2}{\lambda_1^2\lambda_i^2}\leqslant C\frac{\dot{\s}_k^{11}\lambda_k^2}{\lambda_1\lambda_i^2}.
\end{align}
For any $k\leqslant p\leqslant n$, we have 
\begin{align}\label{sk-1k-3,p}
    \left|\frac{\ddot{\s}_{k-1}^{11,pp}}{\s_{k-1}}-\frac{\dot{\s}_{k-1}^{pp}\dot{\s}_{k-1}^{11}}{\s_{k-1}^2}\right|\leqslant \frac{C(n,k)}{\lambda_1^2}\leqslant C\frac{\dot{\s}_k^{11}}{\lambda_1}.
\end{align}
Here we use Property \eqref{fk1}, Claim 2 and Claim 3 in \eqref{sk-1k-3,i} and \eqref{sk-1k-3,p}.
Furthermore, for any $2\leqslant i\leqslant k-1$, we have
\begin{equation}\label{i}
    \begin{aligned}
     &\frac{\ddot{\s}_k^{11,ii}}{\s_k}-\frac{\dot{\s}_k^{ii}}{\s_k}\frac{\s_{k-1}^{11}}{\s_{k-1}} -\frac{\dot{\s}_k^{11}}{\s_k}\frac{\dot{\s}_{k-1}^{ii}}{\s_{k-1}}+\frac{\dot{\s}_{k-1}^{11}\dot{\s}_{k-1}^{ii}}{\s_{k-1}^2}\\
     =&\frac{-\s_k+\dot{\s}_k^{11}\lambda_1+\s_k(\lambda|1i)}{\lambda_1\lambda_i\s_k}+\frac{\dot{\s}_k^{ii}}{\s_k}\frac{\dot{\s}_k^{11}}{\lambda_1\s_{k-1}}-\f\frac{1}{\lambda_i}
     -\frac{\dot{\s}_k^{ii}}{\lambda_i\s_{k-1}}\r\frac{\dot{\s}_k^{11}}{\s_k}\\
     &+\frac{\f\s_{k-1}-\dot{\s}_k^{11}\r\f\s_{k-1}-\dot{\s}_k^{ii}\r}{\lambda_1\lambda_i\s_{k-1}^2}\\
     =&\frac{\s_k(\lambda|1i)}{\lambda_1\lambda_i\s_k}-\frac{\dot{\s}_k^{ii}\s_k(k|1)}{\lambda_1\lambda_i\s_{k-1}\s_k}+\frac{\dot{\s}_k^{11}}{\lambda_1}\f\frac{\dot{\s}_k^{ii}}{\s_{k-1}\s_k}-\frac{\s_{k-2}(\lambda|i)}{\s_{k-1}^2}\r.
    \end{aligned}
\end{equation}
Here 
\begin{align}\label{i1}
  \left|\frac{\s_k(\lambda|1i)}{\lambda_1\lambda_i\s_k}\right|\leqslant \frac{C\lambda_1\dots\lambda_k^3}{\lambda_1^2\lambda_i^2\s_k}\leqslant\frac{C\dot{\s}_k^{11}\lambda_k}{\lambda_i^2}. 
\end{align}

By using \eqref{sk(k1)}, we have 
\begin{align}\label{i2}
 \left|\frac{\dot{\s}_k^{ii}\s_k(k|1)}{\lambda_1\lambda_i\s_{k-1}\s_k}\right|\leqslant        C\frac{\lambda_1\cdots\lambda_k^2\dot{\s}_k^{11}}{\lambda_i^2\s_{k-1}\s_k}\leqslant C\frac{\lambda_k^2\dot{\s}_k^{11}}{\lambda_i^2}.
\end{align}
Similarly, combining Claim 2 we have
\begin{align}\label{i3}
   \left|\frac{\s_{k-1}(\lambda|i)}{\s_{k-1}\s_k}-\frac{\s_{k-2}(\lambda|i)}{\s_{k-1}^2}\right|\leqslant C\frac{\lambda_1\cdots\lambda_k}{\lambda_i\s_{k-1}\s_k}+C\frac{\lambda_1\cdots\lambda_{k-1}}{\lambda_i\s_{k-1}^2}\leqslant \frac{C\lambda_k}{\lambda_i}\f1+\frac{\lambda_k}{\lambda_1}\r,
   \end{align}
since 
\begin{align}
    \s_{k-1}>\lambda_1\dots\lambda_{k-1}=\frac{\lambda_1\dots\lambda_k^2}{\lambda_k^2}\geq C\frac{\lambda_1}{\lambda_k^2}.
\end{align}
Inserting \eqref{i1}, \eqref{i2} and \eqref{i3} into \eqref{i} and combining \eqref{sk-1k-3,i} we have 
\begin{equation}\label{i,l}
    \begin{aligned}
&2a\xi_1\gamma_i\f\frac{\ddot{\s}_k^{11,ii}}{\s_k}-\frac{\dot{\s}_k^{ii}}{\s_k}\frac{\dot{\s}_{k-1}^{11}}{\s_{k-1}}-\frac{\dot{\s}_k^{11}}{\s_k}\frac{\dot{\s}_{k-1}^{ii}}{\s_{k-1}}+2\frac{\dot{\s}_{k-1}^{11}\dot{\s}_{k-1}^{ii}}{\s_{k-1}^2}-\frac{\ddot{\s}_{k-1}^{11,ii}}{\s_{k-1}}\r\\
\geqslant&-2C|a\xi_1\gamma_i|\frac{\dot{\s}_k^{11}\lambda_k}{\lambda_i^2}\geqslant -\e\frac{\dot{\s}_k^{11}\lambda_1^2}{\lambda_i^3}\gamma_i^2-\frac{Ca^2}{\e}\frac{\dot{\s}_k^{11}\xi_1^2}{\lambda_1^2}
    \end{aligned}
\end{equation}
where we use $\lambda_i>\lambda_k$, $\lambda_1\gg1$ and $\lambda_k<C(A)$ by Lemma \ref{upperboundforkappak} in the last inequality.

For any $k\leqslant p\leqslant n$, we have 
\begin{equation}\label{p,l}
    \begin{aligned}
       &\frac{\ddot{\s}_k^{11,pp}}{\s_k}-\frac{\dot{\s}_k^{pp}}{\s_k}\frac{\s_{k-1}^{11}}{\s_{k-1}}-\frac{\dot{\s}_k^{11}}{\s_k}\frac{\dot{\s}_{k-1}^{pp}}{\s_{k-1}}+\frac{\dot{\s}_{k-1}^{11}\dot{\s}_{k-1}^{pp}}{\s_{k-1}^2}\\ 
       =&\frac{-\s_k+\dot{\s}_k^{pp}\lambda_p+\s_k(\lambda|1p)}{\lambda_1^2\s_k}+\frac{\dot{\s}_k^{pp}}{\s_k}\frac{\dot{\s}_k^{11}}{\lambda_1\s_{k-1}}+\frac{\dot{\s}_{k-1}^{pp}}{\s_{k-1}}\f\frac{\dot{\s}_{k-1}^{11}}{\s_{k-1}}-\frac{\dot{\s}_k^{11}}{\s_k}\r\\
        =&\frac{\s_k(\lambda|1p)-\s_k}{\lambda_1^2\s_k}+\frac{\s_{k-1}\lambda_p-\s_{k-2}(\lambda|p)\lambda_p^2}{\lambda_1^2\s_k}+\frac{\dot{\s}_k^{pp}}{\s_k}\frac{\dot{\s}_k^{11}}{\lambda_1\s_{k-1}}+\frac{\dot{\s}_{k-1}^{pp}}{\s_{k-1}}\f\frac{\dot{\s}_{k-1}^{11}}{\s_{k-1}}-\frac{\dot{\s}_k^{11}}{\s_k}\r.
    \end{aligned}
\end{equation}
By using Claim 2 and Claim 3, we have 
\begin{align}\label{p2}
    \frac{|\s_k(\lambda|1p)-\s_k|}{\lambda_1^2\s_k}\leqslant\frac{C\lambda_2\cdots\lambda_k^2}{\lambda_1^2}+\frac{1}{\lambda_1^2}\leqslant C'\frac{\dot{\s}_k^{11}}{\lambda_1},
\end{align}
\begin{align}\label{p1}
   \frac{\s_{k-2}(\lambda|p)\lambda_p^2}{\lambda_1^2\s_k}\leqslant \frac{ C\lambda_1\cdots\lambda_{k-2}\lambda_k^2}{\lambda_1^2\s_k}\leqslant C'\frac{\dot{\s}_k^{11}}{\lambda_{k-1}},
\end{align}
and 
\begin{align}\label{p3}
   \frac{\dot{\s}_k^{pp}}{\s_k}\frac{\dot{\s}_k^{11}}{\lambda_1\s_{k-1}}\leqslant C\frac{\lambda_1\cdots\lambda_{k-1}}{\s_{k-1}}\frac{\dot{\s}_k^{11}}{\lambda_1}\leqslant C \frac{\dot{\s}_k^{11}}{\lambda_1}.
\end{align}
Besides,
\begin{align}\label{p4}
   \left|\frac{\dot{\s}_{k-1}^{pp}}{\s_{k-1}}\f\frac{\dot{\s}_{k-1}^{11}}{\s_{k-1}}-\frac{\dot{\s}_k^{11}}{\s_k}\r\right|\leqslant C\frac{\lambda_1\cdots\lambda_{k-2}\dot{\s}_k^{11}}{\s_{k-1}}\leqslant C\frac{\dot{\s}_k^{11}}{\lambda_{k-1}}.
\end{align}
Inserting \eqref{perp}, \eqref{p1}, \eqref{p2}, \eqref{p3} and \eqref{p4} into \eqref{p,l} and combining \eqref{sk-1k-3,p} we obtain 
\begin{align}\label{p,l2}
  &2a\xi_1\gamma_p\f\frac{\ddot{\s}_k^{11,pp}}{\s_k}-\frac{\dot{\s}_k^{pp}}{\s_k}\frac{\dot{\s}_{k-1}^{11}}{\s_{k-1}}-\frac{\dot{\s}_k^{11}}{\s_k}\frac{\dot{\s}_{k-1}^{pp}}{\s_{k-1}}+2\frac{\dot{\s}_{k-1}^{11}\dot{\s}_{k-1}^{pp}}{\s_{k-1}^2}-\frac{\ddot{\s}_{k-1}^{11,pp}}{\s_{k-1}}\r\\  
  \geqslant&-2C\frac{\dot{\s}_k^{11}}{\lambda_{k-1}}|a\gamma_p\xi_1|\geqslant-\e\frac{\dot{\s}_k^{11}\lambda_1^2\gamma_p^2}{\lambda_{k-1}\lambda_k^2}-\frac{Ca^2}{\e}\frac{\dot{\s}_k^{11}\xi_1^2}{\lambda_1^2}
\end{align}
where we use the fact that $\lambda_1>\lambda_{k-1}>\lambda_k$ and $\lambda_k<C(A)$ by Lemma \ref{upperboundforkappak} in the last inequality.
Using \eqref{sl}, we estimate the first term of \eqref{qk}: 
\begin{align}
    -2a^2\xi_1^2\frac{\dot{\s}_{k-1}^{11}}{\s_{k-1}}\f\frac{\dot{\s}_{k-1}^{11}}{\s_{k-1}}-\frac{\dot{\s}_k^{11}}{\s_k}\r=&-2a^2\xi_1^2\frac{\dot{\s}_{k-1}^{11}}{\s_{k-1}}\f\frac{\s_{k-1}-\dot{\s}_k^{11}}{\lambda_1\s_{k-1}}-\frac{\s_k-\s_k(\lambda|1)}{\lambda_1\s_k}\r\nonumber\\
    \geqslant&\frac{2a^2\xi_1^2}{\lambda_1}\f1-\frac{C}{\lambda_1^2}\r\f\frac{\lambda_1\dot{\s}_k^{11}-\s_k}{\lambda_1\s_k}\r
\end{align}
where we use Claim 2, Claim 3 in the last inequality.

Applying Lemma \ref{decom} and inserting \eqref{i,l} and \eqref{p,l2} into \eqref{qk}, we obtain 
\begin{equation}\label{qk2}
\begin{aligned}
      -\frac{\partial^2_{\xi}q_k}{q_k}\geqslant &\frac{2a^2\xi_1^2}{\lambda_1}\f 1-\frac{C}{\lambda_1^2}\r\f\frac{\lambda_1\dot{\s}_k^{11}-\s_k}{\lambda_1\s_k}\r+\sum_{i=2}^{k-1}(C_1-\epsilon)\frac{\dot{\s}_k^{11}\lambda_1^2}{\lambda_i^3}\gamma_i^2\\
      &+\sum_{p=k}^n(C_1-\e)\frac{\dot{\s}_k^{11}\lambda_1^2\gamma_p^2}{\lambda_{k-1}\lambda_k^2}-(n-1)\frac{Ca^2}{\e}\frac{\dot{\s}_k^{11}\xi_1^2}{\lambda_1^2}\\
      \geqslant& \frac{2a^2\xi_1^2}{\lambda_1}\f1-\frac{C}{\lambda_1^2}\r\f\frac{\lambda_1\dot{\s}_k^{11}-\s_k}{\lambda_1\s_k}\r+\sum_{i=2}^{k-1}\frac{C_1}{2}\frac{\dot{\s}_k^{11}\lambda_1^2}{\lambda_i^3}\gamma_i^2+\frac{C_1 }{2}\frac{\dot{\s}_k^{11}\lambda_1^2\gamma_p^2}{\lambda_{k-1}\lambda_k^2}\\
      &-2(n-1)\frac{Ca^2}{C_1}\frac{\dot{\s}_k^{11}\xi_1^2}{ M\lambda_1}
 \end{aligned}   
\end{equation}
by choosing $\e=\frac{C_1}{2}$.
Then combining \eqref{xi2}, \eqref{iq c0} and \eqref{qk2}, we have
\begin{equation}\label{xi3}
    \begin{aligned}
         &-\sum_{p\neq q}\frac{\s_k^{pp,qq}\xi_p\xi_q}{\s_k}+K\frac{(\sum_i\s_k^{ii}\xi_i)^2}{\s_k^2}+2\sum_{i>1}\frac{\s_k^{ii}\xi_i^2}{ (\lambda_1+A+1)\s_k}\\
\geqslant& \frac{2a^2\xi_1^2}{\lambda_1}\f1-\frac{C}{\lambda_1^2}\r\f\frac{\lambda_1\dot{\s}_k^{11}-\s_k}{\lambda_1\s_k}\r+\sum_{i=2}^{k-1}\frac{C_1}{2}\frac{\dot{\s}_k^{11}\lambda_1^2}{\lambda_i^3}\gamma_i^2+\sum_{p=k}^n\frac{C_1 }{2}\frac{\dot{\s}_k^{11}\lambda_1^2\gamma_p^2}{\lambda_{k-1}\lambda_k^2}\\
&-2(n-1)\frac{Ca^2}{C_1}\frac{\dot{\s}_k^{11}\xi_1^2}{ M\lambda_1}+\frac{(k+2)^2}{(k+1)(k+3)}\f1-\frac{C(A)}{\lambda_1}\r^2\frac{\xi_1^2}{\lambda_1^2}\\
&+\f2-\frac{C(k,A)}{M}\r\sum_{l+1\leqslant p\leqslant n}\frac{\s_k^{pp}\xi_p^2}{\lambda_1\s_k}.
    \end{aligned}
\end{equation} 
Under the assumptions in \eqref{sk+1}, we consider
\begin{equation}
 \begin{aligned}
  \sum_{l+1\leqslant p\leqslant n}\frac{\s_k^{pp}\xi_p^2}{\lambda_1\s_k}=&\sum_{l+1\leqslant p\leqslant n}\frac{\s_k^{pp}\f(1+a)\frac{\lambda_p\xi_1}{\lambda_1}+\gamma_p\r^2}{\lambda_1\s_k}\\
\geqslant &(1+a)^2\xi_1^2\sum_{l+1\leqslant p\leqslant n}\frac{\s_k^{pp}\lambda_p^2}{\lambda_1^3\s_k}-2\left|(1+a)\xi_1\sum_{l+1\leqslant p\leqslant n}\frac{\s_k^{pp}\lambda_p\gamma_p}{\lambda_1^2\s_k}\right|\\
\geqslant &(1+a)^2\xi_1^2\sum_{l+1\leqslant p\leqslant n}\frac{\s_k^{pp}\lambda_p^2}{\lambda_1^3\s_k}-2\left|(1+a)\xi_1\sum_{k\leqslant p\leqslant n}\frac{\s_{k-2}(\lambda|p)\lambda_p^2\gamma_p}{\lambda_1^2\s_k}\right|\\
&-2\sum_{l+1\leqslant i\leqslant k-1}\left|(1+a)\xi_1\frac{\s_k^{ii}\lambda_i\gamma_i}{\lambda_1^2\s_k}\right|
\end{aligned}   
\end{equation}
where we use \eqref{sl} and \eqref{perp 3}, which implies that 
\begin{align}
    \sum_{k\leq p\leq n}\dot{\s}_k^{pp}\lambda_p\gamma_p=(\s_{k-1}-\s_{k-2}(\lambda|p)\lambda_p)\lambda_p\gamma_p=-\s_{k-2}(\lambda|p)\lambda_p^2\gamma_p
\end{align} 
in the last inequality.
Then by \eqref{sk+1}, \eqref{sk+1,i}, \eqref{kpsk}, and \eqref{sk,1+}, we have 
\begin{align}
    \sum_{l+1\leqslant p\leqslant n}\frac{\s_k^{pp}\lambda_p^2}{\lambda_1^3\s_k}=&\frac{\sum_{p=l+1}^n\lambda_p\s_k-\sum_{i=1}^{l}\s_{k+1}(\lambda|i)-(k+1-l)\s_{k+1}}{\lambda_1^3\s_k}\nonumber\\
    \geqslant& -(k+1-l)\frac{\s_{k+1}}{\lambda_1^3\s_k}-\frac{C(A)\lambda_1\cdots\lambda_k^2}{M\lambda_1^3\s_k}\nonumber\\
    \geqslant&(k+1-l)(1-\frac{c}{\lambda_1})\frac{\lambda_1\dot{\s}_k^{11}-\s_k}{\lambda_1^2\s_k}-\frac{C(A)}{M}\frac{\dot{\s}_k^{11}}{\lambda_1\s_k}.
\end{align}
Furthermore,
\begin{equation}
 \begin{aligned}
    2\left|(1+a)\xi_1\sum_{k\leqslant p\leqslant n}\frac{\s_{k-2}(\lambda|p)\lambda_p^2\gamma_p}{\lambda_1^2\s_k}\right|\leqslant &2|(1+a)\xi_1\gamma_p|\frac{C\lambda_1\cdots\lambda_{k-2}\lambda_k^2}{\lambda_1^2\s_k}\\
    \leqslant& 2|(1+a)\xi_1\gamma_p|\frac{C\dot{\s}_k^{11}}{\lambda_{k-1}\s_k}\\
    \leqslant&-\frac{C_1}{2}\frac{\dot{\s}_k^{11}\lambda_1^2\gamma_p^2}{\lambda_{k-1}\lambda_k^2}-\frac{2}{C_1}\frac{C'(1+a)^2\dot{\s}_k^{11}\xi_1^2}{ M\lambda_1}
\end{aligned}   
\end{equation}
and for any $l+1\leqslant i\leqslant k-1$
\begin{equation}
 \begin{aligned}
    2\left|(1+a)\xi_1\frac{\s_{k-1}(\lambda|i)\lambda_i\gamma_i}{\lambda_1^2\s_k}\right|=&2\left|(1+a)\xi_1\frac{\f\s_k-\s_k(\lambda|i)\r\gamma_i}{\lambda_1^2\s_k}\right|\\
    \leqslant &2|(1+a)\xi_1\gamma_i|\frac{C\lambda_1\lambda_2\cdots\lambda_{k-1}\lambda_k^2}{\lambda_1^2\lambda_i\s_k}\\
    \leqslant& 2|(1+a)\xi_1\gamma_i|\frac{C\dot{\s}_k^{11}}{\lambda_i}\\
    \leqslant&-\frac{C_1}{2}\frac{\dot{\s}_k^{11}\lambda_1^2\gamma_i^2}{\lambda_i^3}-\frac{2}{C_1}\frac{C'(1+a)^2\dot{\s}_k^{11}\xi_1^2}{M\lambda_1}
\end{aligned}   
\end{equation}
where we use $\lambda_{k-1}\geqslant\lambda_k$ and $\lambda_k<C(A)$ by Lemma \ref{upperboundforkappak}, the assumptions $\k_i\leqslant M$ and $\k_1>M^2$ in Claim 2.
Thus \eqref{xi3} implies that 
\begin{equation}
    \begin{aligned}
         &-\sum_{p\neq q}\frac{\s_k^{pp,qq}\xi_p\xi_q}{\s_k}+K\frac{(\sum_i\s_k^{ii}\xi_i)^2}{\s_k^2}+2\sum_{i>1}\frac{\s_k^{ii}\xi_i^2}{ (\lambda_1+A+1)\s_k}\\
\geqslant& \frac{2a^2\xi_1^2}{\lambda_1}\f1-\frac{C}{ M}\r\f\frac{\lambda_1\dot{\s}_k^{11}-\s_k}{\lambda_1\s_k}\r-2(n-1)\frac{C'a^2}{C_1}\frac{\dot{\s}_k^{11}\xi_1^2}{M\lambda_1}\\
&+\frac{(k+2)^2}{(k+1)(k+3)}\frac{\xi_1^2}{\lambda_1^2}+\f2-\frac{C(k,A)}{M}\r(k+1-l)(1+a)^2\frac{\lambda_1\dot{\s}_k^{11}-\s_k}{\lambda_1\s_k}\\
&-\frac{C(n,k,A)(1+a)^2}{M}\frac{\dot{\s}_k^{11}\xi_1^2}{\lambda_1}\\
    \geqslant &\frac{2(a^2+2(1+a)^2)\xi_1^2}{\lambda_1}\f1-\frac{C_2(k,A)}{M}\r\f\frac{\lambda_1\dot{\s}_k^{11}-\s_k}{\lambda_1\s_k}\r+\frac{(k+2)^2}{(k+1)(k+3)}\frac{\xi_1^2}{\lambda_1^2}\\
&- C_3(n,k,A)(1+a^2)\frac{\dot{\s}_k^{11}\xi_1^2}{M\lambda_1}\\
\geqslant&\frac{8}{5}(a^2+2(1+a)^2)\frac{\dot{\s}_k^{11}\xi_1^2}{\lambda_1\s_k}+\f\frac{(k+2)^2}{(k+1)(k+3)}-\frac{8}{5}(a^2+2(1+a)^2)\r\frac{\xi_1^2}{\lambda_1^2}
    \end{aligned}
\end{equation}
by assuming $M$ large depending on $n$, $k$ and $A$ where $c_0$ obtained from \eqref{sk1 c0}.
Here under the assumption $\s_k(\lambda|1)<-c_0\s_k$, \eqref{sl} implies that
\begin{align}\label{xi4}
  \frac{\dot{\s}_k^{11}}{\s_k}> -\f1+\frac{1}{c_0}\r\frac{\s_k(\lambda|1)}{\lambda_1\s_k}.
\end{align}
Then 
\begin{align}
     -\sum_{p\neq q}\frac{\s_k^{pp,qq}\xi_p\xi_q}{\s_k}+K\frac{(\sum_i\s_k^{ii}\xi_i)^2}{\s_k^2}+2\sum_{i>1}\frac{\s_k^{ii}\xi_i^2}{ (\lambda_1+A+1)\s_k}\geqslant\frac{16}{15}\frac{\dot{\s}_k^{11}\xi_1^2}{\lambda_1\s_k}+\f\frac{(k+2)^2}{(k+1)(k+3)}-\frac{16}{15}\r\frac{\xi_1^2}{\lambda_1^2}
\end{align}

Thus \eqref{xi4} implies that 
\begin{align}
  -\sum_{p\neq q}\frac{\s_k^{pp,qq}\xi_p\xi_q}{\s_k}+K\frac{(\sum_i\s_k^{ii}\xi_i)^2}{\s_k^2}+2\sum_{i>1}\frac{\s_k^{ii}\xi_i^2}{ (\lambda_1+A+1)\s_k} \geqslant \f1+\d_0\r\frac{\dot{\s}_k^{11}\xi_1^2}{\lambda_1\s_k}
\end{align}
where $\d_0=\min\lbrace\frac{1}{15},\frac{1}{(k+1)(k+3)}\rbrace$. 
Hence, we obtain \eqref{main,iq} and complete the proof.
\end{proof}
Due to \eqref{iq c0}, we also improve the results in \cites{Lu2,HZ}.
\begin{lem}\label{lem imp}
    Assume $\lbrace\lambda_i\rbrace\in\Gamma_k$, $\lambda_1\geqslant\lambda_2\cdots\geqslant \lambda_n$, and $\lambda_n>-A$ for some constant $A>0$. There exists a constant $\d_0>0$ and a large constant $C>0$ depending on $n$, $k$, $\s_k$, and $A$, such that if $lambda_1\geqslant C$, then the following inequality holds
    \begin{align}
        -\frac{\s_k^{pp,qq}\xi_p\xi_q}{\s_k}+K\frac{(\sum_i\s_k^{ii}\xi_i)^2}{\s_k^2}+2\sum_{i>1}\frac{\s_k^{ii}\xi_i^2}{ (\lambda_1+A+1)\s_k}\geqslant (1+\d_0)\frac{\xi_1^2}{\lambda_1^2},
    \end{align}
     for some sufficient large $K$ (depending on $\d_0$) where $\xi=( \xi_1,\cdots,\xi_n)$ is an arbitrary vector in $\mathbb{R}^n$.
\end{lem}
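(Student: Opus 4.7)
The plan is to extract the desired inequality directly from \eqref{iq c0} in the proof of Lemma \ref{iq sc}, without entering the delicate case analysis based on the sign of $\s_k(\lambda|1)$. The derivation that leads to \eqref{iq c0} uses only the concavity of $\log\s_k$ decomposed via the Huisken-Sinestrari telescoping $\log\s_k = \log q_k + \log q_{k-1} + \cdots + \log q_2 + \log \s_1$, a Cauchy-Schwarz step that isolates the $\xi_1$ contribution, and the semi-convexity-based estimates on $\dot{\s}_{k-1}^{11}\dot{\s}_{k-1}^{ii} - \ddot{\s}_{k-1}^{11,ii}\s_{k-1}$ obtained via $\lambda_n > -A$ together with Lemma \ref{upperboundforkappak}. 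None of these steps rely on any further structural hypothesis than what is assumed in the current lemma.

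Explicitly, the inequality \eqref{iq c0} reads
\begin{equation*}
-\sum_{p\neq q}\frac{\s_k^{pp,qq}\xi_p\xi_q}{\s_k} + K\frac{(\sum_i \s_k^{ii}\xi_i)^2}{\s_k^2} + 2\sum_{i>1}\frac{\s_k^{ii}\xi_i^2}{(\lambda_1+A+1)\s_k} \geq \frac{(k+2)^2}{(k+1)(k+3)}\frac{\xi_1^2}{\lambda_1^2} + \f 2 - 2(k+1)^2 C_2 \lambda_1^{-1/(k-1)}\r \sum_{i>1}\frac{\s_k^{ii}\xi_i^2}{\lambda_1 \s_k}.
\end{equation*}
Using the identity $\frac{(k+2)^2}{(k+1)(k+3)} = 1 + \frac{1}{(k+1)(k+3)}$, I would set $\d_0 = \frac{1}{(k+1)(k+3)}$, choose $\lambda_1$ large enough that $2 - 2(k+1)^2 C_2 \lambda_1^{-1/(k-1)} \geq 0$, that is $\lambda_1 \geq ((k+1)^2 C_2)^{k-1}$, and then discard the resulting non-negative sum over $i>1$ to conclude the claimed inequality.

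There is essentially no obstacle, since the work has already been carried out inside the proof of Lemma \ref{iq sc}: Lemma \ref{lem imp} is precisely the weaker conclusion one obtains by halting at \eqref{iq c0} rather than continuing. The subsequent case analysis in Lemma \ref{iq sc}, which invokes Claims 1-3 and the decomposition $\xi = (1+a)\frac{\alpha \xi_1}{\lambda_1} - a\xi_1 e_1 + \gamma$ under the assumption $\s_k(\lambda|1) < -c_0 \s_k$, is exactly what upgrades $\frac{\xi_1^2}{\lambda_1^2}$ to the sharper right-hand side $\frac{\dot{\s}_k^{11}\xi_1^2}{\lambda_1 \s_k}$ appearing in Lemma \ref{iq sc}; it is not needed here. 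The threshold $C$ promised in the statement depends on $((k+1)^2 C_2)^{k-1}$ and on the quantitative bounds used to arrive at \eqref{iq c0}, which in turn depend only on $n$, $k$, $\s_k$ and $A$, exactly as asserted.
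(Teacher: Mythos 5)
Your proposal is correct and is exactly how the paper obtains Lemma \ref{lem imp}: the paper explicitly presents it as a consequence of \eqref{iq c0} (``Due to \eqref{iq c0}, we also improve the results in \cite{Lu2,HZ}''), and your reading of \eqref{iq c0} together with the identity $\frac{(k+2)^2}{(k+1)(k+3)}=1+\frac{1}{(k+1)(k+3)}$, the choice $\d_0=\frac{1}{(k+1)(k+3)}$, the threshold $\lambda_1\geqslant((k+1)^2C_2)^{k-1}$, and the discarding of the nonnegative $i>1$ sum is precisely the intended derivation.
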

\section{Interior estimates of k-Hessian equations with Dirichlet boundary conditions}\label{sec:4}
Somewhere in the sequel, we denote $\s_k$ by $F$ for brevity and define
\begin{align*}
\dot{ F }^{pq}(D^2 u): =\frac{\partial F}{\partial u_{pq} },\quad
\ddot{F}^{pq,rs}(D^2 u): =\frac{\partial^2 F}{\partial u_{qp}\partial u_{rs}}.
\end{align*}
If We further assume that $\lbrace u_{ij} \rbrace$ is diagonal at a certain point, then we define
\begin{align*}
\frac{\partial F}{\partial\lambda_p}(\lambda(D^2 u))=\dot{F}^{pp}(D^2 u),\quad\frac{\partial^2 F}{\partial\lambda_p\partial\lambda_q}(\lambda(D^2 u))=\ddot{F}^{pp,qq}(D^2 u).
\end{align*}

We first proof Theorem \ref{thm sc}.
\begin{proof}
we consider the function for any $\e>0$, $x \in \Omega$, $\xi \in \mathbb{S}^{n-1}$
\begin{eqnarray*}
\widetilde{P}(x, \xi)=\b\log (-u)+\log \max \{u_{\xi\xi}(x), 1\}+\frac{L}{2}|Du|^2,
\end{eqnarray*}
where $L\gg 1$, $\b>0$ are constants to be determined later. Clearly, the maximum of $\widetilde{P}$ is attained
at some interior point $x_0$ of $\Omega$ and some $\xi(x_0) \in \mathbb{S}^{n-1}$.
Choose canonical coordinate frames $e_1, \ldots, e_n$ at $x_0$ such that $\xi(x_0)=e_1$ and $\{u_{ij} (x_0)\}$ is diagonal. Let the eigenvalues $\lambda$ of $D^2 u$ be ordered as
$$\lambda_1(x_0)\geqslant\lambda_2(x_0)\geqslant...\geqslant \lambda_n(x_0).$$ We construct a unit vector field $v(s)$ near $p$ such that $v(0)=e_1(x_0)$ and $v'(0)=\sum_{p\neq 1}\frac{u_{1pi}}{\lambda_1+1-\lambda_p}e_p$.
We may also assume that $\lambda_1(x_0)> 1$ is sufficiently large.
Then we consider the function
\begin{eqnarray*}
P(x)=\b\log (-u)+\log\f \nabla^2 u\f v(s),v(s)\r+\f g(v(s),e_1(s))\r^2-1\r+\frac{L}{2}|Du|^2.
\end{eqnarray*}
Note that $x_0$ is also a maximum point of $P$. At $x_0$, we have 
\begin{align}\label{cri}
    0=\nabla_iP(x_0)=\b\frac{u_i}{u}+\frac{u_{11i}}{u_{11}}+Lu_i\lambda_i
\end{align}
   and 
   \begin{equation}\label{hess 31}
   \begin{aligned}
    0\geqslant &  \nabla_{ij}P(x_0) \\
    =&\b\frac{u_{ij}}{u}-\b\frac{u_i u_j}{u^2}+\frac{u_{11ij}}{u_{11}}+\sum_{l>1}\frac{u_{1li}^2}{u_{11}(u_{11}-u_{ll}+1)}\\
    &-\frac{u_{11i}u_{11j}}{u_{11}^2}+L u_{ki}u_{kj}+L u_{kij}u_k.
   \end{aligned}    
   \end{equation}
By $\s_k(D^2 u)=f(x,u,\nabla u)$, we have at $x_0$ 
\begin{equation}\label{gra}
   \nabla_l\s_k=f_l+f_uu_l+f_{u_l}u_{ll}
\end{equation}
and 
\begin{equation}\label{hess s}
\nabla^2_{11}\s_k=\dot{F}^{ij}u_{ij11}+\ddot{F}^{pq,rs}u_{pq1}u_{rs1}=f_{11}.
\end{equation}
Besides, there exists some constant $C>0$ depending on $\|f\|_{C^2}$ and $\|u\|_{C^1}$ such that 
\begin{align}
    |f_1|\leqslant C+Cu_{11},\quad f_{11}\geqslant-C-Cu_{11}^2+f_{u_l}u_{11l}.
\end{align}
Multiplying \eqref{hess 31} by using $\dot{F}^{ij}$, by \eqref{cri}, \eqref{gra} and \eqref{hess s} we obtain 
\begin{equation}\label{hess 32}
\begin{aligned}
    0\geqslant& \b\frac{kF}{u}-\b\frac{\dot{F}^{ii}u_i^2}{u^2}+\frac{\dot{F}^{ii}u_{11ii}}{u_{11}}+2\sum_{p>1}\frac{\dot{F}^{ii}u_{1pi}}{u_{11}(u_{11}-\lambda_p+1)}-\frac{\dot{F}^{ii}u_{11i}^2}{u_{11}^2}+L\dot{F}^{ii}u_{ii}^2+L\nabla_l\s_k u_l\\
    \geqslant &\b\frac{kF}{u}-\b\frac{\dot{F}^{ii}u_i^2}{u^2}-\frac{\dot{F}^{pq,rs}u_{pq1}u_{rs1}}{u_{11}}+\frac{f_{u_l}u_{11l}}{u_{11}}+2\sum_{p>1}\frac{\dot{F}^{ii}u_{1pi}}{u_{11}(u_{11}-u_{pp}+1)}\\
    &-\frac{\dot{F}^{ii}u_{11i}^2}{u_{11}^2}+L\dot{F}^{ii}u_{ii}^2-Cu_{11}+Lf_{u_l}\lambda_l u_l-CL\\
    \geqslant &\b\frac{kF}{u}-\b\frac{\dot{F}^{ii}u_i^2}{u^2}-\frac{\dot{F}^{pq,rs}u_{pq1}u_{rs1}}{u_{11}}+2\sum_{p>1}\frac{\dot{F}^{ii}u_{1pi}}{u_{11}(u_{11}-u_{pp}+1)}-\frac{\dot{F}^{ii}u_{11i}^2}{u_{11}^2}+L\dot{F}^{ii}u_{ii}^2\\
    &-\b\frac{f_{u_l}u_l}{u}-Cu_{11}-CL.
\end{aligned}
\end{equation}
Referring to Andrews \cite{And07}, we have 
\begin{equation}\label{fppqq1}
 \begin{aligned}
-\ddot{F}^{pq,rs}u_{pq1}u_{rs1}=&-\ddot{F}^{pp,qq}u_{pp1}u_{qq1}+2\sum_{p>q}\ddot{F}^{pp,qq}u_{pq1}^2\\
\geqslant& -\ddot{F}^{pp,qq}u_{pp1}u_{qq1}+2\sum_{\lambda_p<\lambda_1}\frac{\dot{F}^{pp}-\dot{F}^{11}}{\lambda_1-\lambda_p}u_{11p}^2.
\end{aligned}   
\end{equation}
Besides, 
\begin{eqnarray}\label{fppqq2}
 \sum_{1<p,1\leqslant i\leqslant n}\frac{2\dot{F}^{ii}u_{1pi}^2}{(u_{11}+1- u_{pp})}\geqslant  2\sum_{p>1}\frac{\dot{F}^{11}u_{11p}^2}{(u_{11}-u_{pp}+1)}+ 2\sum_{i>1}\frac{\dot{F}^{ii}u_{1ii}^2}{(u_{11}-u_{ii}+1)}.
\end{eqnarray}
Using the critical equation \eqref{cri}, for any $i\geqslant 2$ we have 
\begin{align}
   -\b\frac{\dot{F}^{ii}u_i^2}{u^2}=-\frac{1}{\b}\f\frac{u_{11i}}{u_{11}}+Lu_iu_{ii}\r^2\geqslant -\frac{2}{\b}\frac{\dot{F}^{ii}u_{11i}^2}{u_{11}^2}-2\frac{L^2}{\b}\dot{F}^{ii}u_i^2u_{ii}^2.
\end{align}
Then we choose $\b=L^2\gg 4$ and obtain 
\begin{equation}\label{fppqq3}
    \begin{aligned}
&-\frac{\dot{F}^{pq,rs}u_{pq1}u_{rs1}}{u_{11}}+2\sum_{p>1}\frac{\dot{F}^{ii}u_{1pi}^2}{u_{11}(u_{11}-u_{pp}+1)}-\f 1+\frac{2}{\b}\r\sum_{i\geqslant2}\frac{\dot{F}^{ii}u_{11i}^2}{u_{11}^2}-\frac{\dot{F}^{11}u_{111}^2}{u_{11}^2}\\
\geqslant&-\frac{\ddot{F}^{pp,qq}u_{pp1}u_{qq1}}{u_{11}}+2\sum_{p>1}\frac{\dot{F}^{pp}}{u_{11}({u_{11}-u_{pp}+1})}u_{11p}^2+ 2\sum_{i>1}\frac{\dot{F}^{ii}u_{1ii}^2}{u_{11}(u_{11}-u_{ii}+1)}\\
&-\frac{3}{2}\sum_{i\geqslant2}\frac{\dot{F}^{ii}u_{11i}^2}{u_{11}^2}-\frac{\dot{F}^{11}u_{111}^2}{u_{11}^2}\\
=&-\frac{\ddot{F}^{pp,qq}u_{pp1}u_{qq1}}{u_{11}}+ 2\sum_{i>1}\frac{\dot{F}^{ii}u_{1ii}^2}{u_{11}(u_{11}+A+1)}-\frac{\dot{F}^{11}u_{111}^2}{u_{11}^2}+\frac{\dot{F}^{ii}u_{11i}^2\f\frac{1}{2}u_{11}-\frac{3}{2}A-\frac{3}{2}\r}{u_{11}^2(u_{11}-u_{ii}+1)}\\
\geqslant &-\frac{\ddot{F}^{pp,qq}u_{pp1}u_{qq1}}{u_{11}}+ 2\sum_{i>1}\frac{\dot{F}^{ii}u_{1ii}^2}{u_{11}(u_{11}+A+1)}-\frac{\dot{F}^{11}u_{111}^2}{u_{11}^2}
\end{aligned}
\end{equation}
by assuming $\lambda_1\geqslant 3A+3$ in the last inequality.

Due to Lemma \ref{iq sc}, combining \eqref{fppqq1} and \eqref{fppqq2}, \eqref{hess 32} becomes 
\begin{equation}\label{hess 33}
\begin{aligned}
    0\geqslant &-L^2\frac{\dot{F}^{11}u_1^2}{u^2}-2\dot{F}^{ii}u_i^2\lambda_i^2-\frac{\dot{F}^{pp,qq}u_{pp1}u_{qq1}}{u_{11}}+K\frac{(\nabla_1\s_k)^2}{u_{11}\s_k}+2\sum_{i>1}\frac{\dot{F}^{ii}u_{1ii}^2}{u_{11}(u_{11}+A+1)}-\frac{\dot{F}^{11}u_{111}^2}{u_{11}^2}\\
    &+L\dot{F}^{ii}u_{ii}^2-\frac{C_3L^2}{u}-C_4L-C_5(K)u_{11}\\
   \geqslant &\f\frac{Lu_{11}^2}{5}-\frac{C_1L^2}{u^2}\r\dot{F}^{11}+\f\frac{L}{5}-C_2\r\sum_{i=1}^n\dot{F}^{ii}u_{ii}^2+L\f\frac{k\s_k}{5n}u_{11}-C_3\frac{L}{u}\r\\
   &+L\f\frac{k\s_k}{5n}u_{11}-C_4\r+\f\frac{Lk\s_k}{5n}-C_5(K)\r u_{11}.
\end{aligned}
\end{equation}
where we use property \eqref{fk1} in the last inequality. In \eqref{hess 33}, $C_i$, $ i=1,2,\dots,5$ all depend on $n$, $k$, $\|f\|_{C^2}$ and $\|u\|_{C^1}$. We choose $L=\max\lbrace 4,5C_2,\frac{5nC_5}{kf}\rbrace$ so that the second term and the last term in the last inequality of \eqref{hess 33} are positive. Then we obtain $\max\lambda_1 u^{\b}\leqslant \max\lbrace\sqrt{5C_1L}\sup u^{\b-1},\frac{5nLC_3\sup u^{\b-1}}{kf_0}, \frac{5nC_4\sup u^{\b}}{kf_0}\rbrace$ and complete the proof.
\end{proof}
\section{A Liouville problem for general k-Hessian equation}\label{sec:5}
We first consider the interior $C^2$ estimates for the following Dirichlet problem:
\begin{equation}\label{eq rig}
\left\{
\begin{aligned}
&\sigma_k(D^2 u)=1 \quad x \in \Omega, \\
&u=0 \quad x \in \partial\Omega.    
\end{aligned}
\right.
\end{equation}
\begin{lem}\label{est, liv}
Assume that $D^2 u\geqslant -AI$ for some positive constant $A>0$.
Then, for any $k$-convex solution $u \in C^2(\Omega)\cap C^{0,1}(\overline{\Omega})$, there exists some $\a>1$ and $C>0$ depending only on $n$, $k$, and the diameter of $\Omega$, such that
\begin{eqnarray}\label{est}
\max_{x \in \Omega}(-u(x))^{\alpha}|D^2 u(x)|\leqslant C.
\end{eqnarray}
\end{lem}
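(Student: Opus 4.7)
The plan is to imitate the proof of Theorem \ref{thm sc}, greatly simplified by the fact that $f\equiv 1$ (so $\nabla_l\s_k=0$ and $\nabla^2_{11}\s_k=0$), and with the $\frac{L}{2}|Du|^2$ term of that proof replaced by $\frac{L}{2}|x|^2$; this swap is exactly what produces the dependence on $\mrm{diam}(\Omega)$ rather than on $\|u\|_{C^1}$. Concretely, I would consider the test function
\[
\widetilde{P}(x,\xi)=\b\log(-u)+\log\max\{u_{\xi\xi}(x),1\}+\frac{L}{2}|x|^2
\]
on $\Omega\times\mathbb{S}^{n-1}$, let $x_0$ be an interior maximum point with $\xi(x_0)=e_1$, diagonalise $D^2u(x_0)$ with $\lambda_1\geqslant\cdots\geqslant\lambda_n$, and smooth $u_{\xi\xi}$ through the vector field $v(s)$ from Section \ref{sec:4} (with $v(0)=e_1$ and $v'(0)=\sum_{p\neq 1}\frac{u_{1p}}{\lambda_1+1-\lambda_p}e_p$) to obtain a $C^2$ test quantity $P$ sharing the maximum at $x_0$.

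Writing the first- and second-order conditions for $P$ at $x_0$ and contracting the latter with $\dot{F}^{ii}$, the absence of any $f_l$, $f_u$, $f_{u_l}$ contributions reduces the analysis to
\[
0\geqslant\b\frac{kF}{u}-\b\frac{\dot{F}^{ii}u_i^2}{u^2}-\frac{\ddot{F}^{pq,rs}u_{pq1}u_{rs1}}{u_{11}}+2\sum_{p>1}\frac{\dot{F}^{ii}u_{1pi}^2}{u_{11}(u_{11}-\lambda_p+1)}-\frac{\dot{F}^{ii}u_{11i}^2}{u_{11}^2}+L\sum_i\dot{F}^{ii}.
\]
I would then apply the Andrews-type identity \eqref{fppqq1}, use the critical equation $\b u_i/u+u_{11i}/u_{11}+Lx_i=0$ to rewrite $\b\dot{F}^{ii}u_i^2/u^2$ in terms of $\dot{F}^{ii}u_{11i}^2/u_{11}^2$ and $L^2\dot{F}^{ii}x_i^2$, and invoke Lemma \ref{iq sc} with $\xi_i=u_{ii1}$ (its $K$-term vanishes automatically because $\sum_i\dot{F}^{ii}u_{ii1}=\nabla_1\s_k=0$) to dominate the third-order derivatives by $(1+\d_0)\dot{F}^{11}u_{111}^2/(\lambda_1 u_{11})$.

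The final step is to note that by Property \eqref{P2} and Lemma \ref{sk-1}, the good term $L\sum_i\dot{F}^{ii}=L(n-k+1)\s_{k-1}\geqslant cL\lambda_1^{1/(k-1)}$ dominates all surviving lower-order contributions, including the $L^2\dot{F}^{ii}x_i^2\leqslant L^2\mrm{diam}(\Omega)^2\sum_i\dot{F}^{ii}$ error introduced by the $|x|^2$ swap, provided $L$ is chosen large depending on $n$, $k$, $A$, $\mrm{diam}(\Omega)$ and $\b=L^2\gg 1$. This yields $\lambda_1(x_0)(-u(x_0))^{\a}\leqslant C$ for some $\a>1$, and since $\widetilde{P}$ attains its maximum at $x_0$, the estimate \eqref{est} propagates to all of $\Omega$. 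The main obstacle is, just as in Theorem \ref{thm sc}, controlling the third-order term $\ddot{F}^{pq,rs}u_{pq1}u_{rs1}/u_{11}$: the classical concavity of $\s_k^{1/k}$ is insufficient for $2<k<n-1$, and this is exactly where the semi-convexity-based concavity inequality of Lemma \ref{iq sc} is indispensable.
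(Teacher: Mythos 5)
Your proposal is essentially the paper's proof: same test function, same contraction with $\dot F^{ij}$, same use of the vector-field perturbation, the Andrews identity, and Lemma~\ref{iq sc}, with your observation that $\nabla_1\sigma_k=0$ kills the $K$-term being exactly what the paper implicitly exploits. The only structural difference is parameterisation: the paper fixes the coefficient of $|x|^2$ at $\frac12$ and tunes a single exponent $\alpha\geqslant\max\{4,\,k-1,\,2/\d_0,\,4\sup|X|^2/(n-k+1)\}$, whereas you introduce $\frac{L}{2}|x|^2$ with $\b=L^2$; these are interchangeable.

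One bookkeeping slip needs correcting. From your critical equation $\b u_i/u+u_{11i}/u_{11}+Lx_i=0$ you have
\[
\b\frac{\dot F^{ii}u_i^2}{u^2}=\frac{1}{\b}\,\dot F^{ii}\Big(\frac{u_{11i}}{u_{11}}+Lx_i\Big)^2\leqslant \frac{2}{\b}\frac{\dot F^{ii}u_{11i}^2}{u_{11}^2}+\frac{2L^2}{\b}\dot F^{ii}x_i^2,
\]
and with $\b=L^2$ the second term is $2\dot F^{ii}x_i^2\leqslant 2\,\mrm{diam}(\Omega)^2\sum_i\dot F^{ii}$, an $O(1)$ multiple of $\sum_i\dot F^{ii}$, \emph{not} $L^2\mrm{diam}(\Omega)^2\sum_i\dot F^{ii}$ as you wrote. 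As written the claim that $L\sum_i\dot F^{ii}$ dominates would fail for large $L$; with the correction it holds. Finally, both arguments close in the same way: the surviving good term $(n-k+1)\s_{k-1}$ is played against $C\a/(-u)$, and Lemma~\ref{sk-1} ($\s_{k-1}\geqslant C\lambda_1^{1/(k-1)}$) converts $(-u)\s_{k-1}\leqslant C$ into $(-u)^{k-1}\lambda_1\leqslant C$, giving \eqref{est} with $\a=k-1>1$ (recall $k>2$).
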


\begin{proof}
Using the comparison principle (see \cites{CNS, GT} for details), there exist large $a$ and $b$ depending on the diameter of $\Omega$, such that
$$w=a|x|^2-b$$
and
$$w\leqslant u\leqslant 0.$$
Thus, $-u$ can be controlled by the diameter of the domain $\Omega$.

Now we consider the function for $x \in \Omega$, $\xi \in \mathbb{S}^{n-1}$
\begin{eqnarray*}
\widetilde{P}(x, \xi)=\alpha\log (-u)+\log \max \{u_{\xi\xi}(x), 1\}+\frac{1}{2}|x|^2,
\end{eqnarray*}
where $\alpha>k-1$ is a constant to be determined later.
Clearly, the maximum of $\widetilde{P}$ is attained
at some interior point $x_0$ of $\Omega$ and some $\xi(x_0) \in \mathbb{S}^{n-1}$.
Choose canonical coordinate frames $e_1, \ldots, e_n$ at $x_0$ such that $\xi(x_0)=e_1$ and $\{u_{ij} (x_0)\}$ is diagonal. Let the eigenvalues $\lambda$ of $D^2 u$ be ordered as
$$\lambda_1(x_0)\geqslant\lambda_2(x_0)\geqslant...\geqslant \lambda_n(x_0).$$ We construct a unit vector field $v(s)$ near $p$ such that $v(0)=e_1(x_0)$ and $v'(0)=\sum_{p\neq 1}\frac{u_{1pi}}{\lambda_1+1-\lambda_p}e_p$.
We may also assume that $\lambda_1(x_0)> 1$ is sufficiently large.
Then we consider the function
\begin{eqnarray*}
P(x)=\a\log (-u)+\log\f \nabla^2 u\f v(s),v(s)\r+\f g(v(s),e_1(s))\r^2-1\r+\frac{1}{2}|x|^2.
\end{eqnarray*}
Note that $x_0$ is also a maximum point of $P$.
At the maximum point $x_0$,
\begin{eqnarray}\label{cri rig}
0=P_i=\frac{\alpha u_i}{u}+\frac{u_{11i}}{u_{11}}+ x_i,
\end{eqnarray}
and
\begin{eqnarray}\label{hess p}
P_{ij}=\frac{\alpha u_{ij}}{u}-\frac{\alpha u_i u_j}{u^2}
+\frac{u_{11ij}}{u_{11}}+\sum_{p>1}\frac{2u_{1pi}^2}{ u_{11}( u_{11}+1- u_{pp})}-\frac{u_{11i}u_{11j}}{u_{11}^2}+\delta_{ij}.
\end{eqnarray}
Multiplying \eqref{hess p} by $\dot{F}^{ij}$, at $x_0$ we have 
\begin{eqnarray}\label{hess p1}
0\geqslant \a\frac{kF}{u}-\a\frac{\dot{F}^{ii}u_i^2}{u^2}
+\frac{\dot{F}^{ii}u_{11ii}}{u_{11}}+2\sum_{p>1}\frac{\dot{F}^{ii}u_{1pi}^2}{ u_{11}( u_{11}+1- u_{pp})}-\frac{\dot{F}^{ii}u_{11i}^2}{u_{11}^2}+(n-k+1)\s_{k-1}.
\end{eqnarray}
Similar to the proof in Theorem \ref{thm sc}, using the critical equation \eqref{cri rig} we obtain at $x_0$ 
\begin{align}
   -\a\frac{\dot{F}^{ii}u_i^2}{u^2}=-\frac{1}{\a}\dot{F}^{ii}\f\frac{u_{11i}}{u_{11}}+x_i\r^2\geqslant -\frac{2}{\a}\frac{\dot{F}^{ii}u_{11i}^2}{u_{11}^2}-\frac{2}{\a}\dot{F}^{ii}x_i^2.
\end{align}
Taking the derivatives of the equation \eqref{eq rig}, we obtain
$$\dot{F}^{ij}u_{ijk}=0,$$
and
\begin{eqnarray*}
\ddot{F}^{ij, kl}u_{kl1}u_{ij1}+\dot{F}^{ij}u_{ij11}=0.
\end{eqnarray*}
By \eqref{fppqq1} and \eqref{fppqq2}, similar to \eqref{fppqq3} we have 
by choosing $\a\gg 4$ 
\begin{equation}
    \begin{aligned}
&-\frac{\dot{F}^{pq,rs}u_{pq1}u_{rs1}}{u_{11}}+2\sum_{p>1}\frac{\dot{F}^{ii}u_{1pi}^2}{u_{11}(u_{11}-u_{pp}+1)}-\f 1+\frac{2}{\a}\r\sum_{i\geqslant2}\frac{\dot{F}^{ii}u_{11i}^2}{u_{11}^2}-\frac{\dot{F}^{11}u_{111}^2}{u_{11}^2}\\
\geqslant &-\ddot{F}^{pp,qq}u_{pp1}u_{qq1}+ 2\sum_{i>1}\frac{\dot{F}^{ii}u_{1ii}^2}{u_{11}(u_{11}+A+1)}-\frac{\dot{F}^{11}u_{111}^2}{u_{11}^2}
\end{aligned}
\end{equation}
by assuming $\lambda_1\geqslant 3A+3$ in the last inequality.
Then applying Lemma \ref{iq sc}, we obtain
\begin{equation}\label{hess p2}
   \begin{aligned}
0\geqslant &\a\frac{kF}{u}-\frac{2}{\a}\dot{F}^{ii}X_i^2-\ddot{F}^{pp,qq}u_{pp1}u_{qq1}+ 2\sum_{i>1}\frac{\dot{F}^{ii}u_{1ii}^2}{u_{11}(u_{11}+A+1)}\\
&-\f1+\frac{2}{\a}\r\frac{\dot{F}^{11}u_{111}^2}{u_{11}^2}+(n-k+1)\s_{k-1}\\
\geqslant &\a\frac{kF}{u}+\f\d_0-\frac{2}{\a}\r\frac{\dot{F}^{11}u_{111}^2}{u_{11}^2}+\f n-k+1-\frac{2\sup_{\Omega}|X|^2}{\a}\r\s_{k-1}\\
\geqslant &\frac{C_1\a}{u}+\frac{n-k+1}{2}\s_{k-1}
\end{aligned} 
\end{equation}
by choosing $\a=\max\lbrace 4, k-1, \frac{2}{\d_0}, \frac{4\sup_{\Omega}|X|^2}{n-k+1}\rbrace$.
Thus, we employ Lemma \ref{sk-1} to obtain that there exists some $C>0$ only depending on $n$ and the diameter of $\Omega$, such that 
\begin{align}
   C(-u)u_{11}^{\frac{1}{k-1}} \leqslant (-u)\s_{k-1}\leqslant \frac{2C_1\a}{n-k+1}.
\end{align}
Then we complete the proof.
\end{proof}

We now begin to prove Theorem \ref{thm rig}.
\begin{proof}
The proof is standard (refer to \cites{LRW,TW} for more details). Let $u$ be an entire solution of \eqref{sk, rig}. We consider the domain
$$\Omega_{R}=\{y \in \mathbb{R}^n: u(Ry)<R^2\}$$ for any large $R>1$.
Let
$$u_{R}(y)=\frac{u(Ry)-R^2}{R^2}.$$
Then 
\begin{align}\label{hess ur}
 D^2_{y} u_R=D^2_{x} u.  
\end{align}
Hence $u_R$ satisfying the following Dirichlet problem:
\begin{equation}\label{SQ-1}
\left\{
\begin{aligned}
&\sigma_k(D^2u_R(x))=1 \quad x \in \Omega_R, \\
&u_R=0 \quad x \in \partial\Omega_R.
\end{aligned}
\right.
\end{equation}
The quadratic growth condition in Theorem \ref{thm rig} implies that
\begin{eqnarray*}
a|R y|^2-b\leqslant u(R y)\leqslant R^2.
\end{eqnarray*}
Then
\begin{eqnarray*}
|y|^2\leqslant \frac{1+b}{a},
\end{eqnarray*}
which means that $\Omega_R$ is bounded. 
Applying Lemma \ref{est, liv}, we obtain
\begin{equation}\label{C rig}
(-u_R)^{\alpha}|D^2 u_R|\leqslant C
\end{equation}
where $C$ is independent of $R$.
Now we
consider the domain
$$\Omega^{\prime}_{R}=\{y \in \mathbb{R}^n: u(Ry)<\frac{R^2}{2}\}\subset \Omega_R.$$
In $\Omega^{\prime}_{R}$, we have
$$u_R(y)\leqslant-\frac{1}{2}.$$
Due to \eqref{hess ur} and \eqref{C rig}, 
$$D^2_{x} u=|D^2 u_R|\leqslant 2^{\alpha} C.$$
The estimate above holds true for all $R$. Applying Evans-Krylov theory we have
$$|D^2 u|_{C^{\alpha}(B_R)}\leqslant C(n, \alpha)\frac{|D^2 u|_{C^{0}(B_{2R})}}{R^{\alpha}}\leqslant \frac{C(n, \alpha)}{R^{\alpha}}.$$
Therefore, we obtain our theorem by letting $R\rightarrow +\infty$.
\end{proof}

\section{Global curvature estimate for convex hypersurfaces}\label{sec:6}
In this section, we focus on semiconvex hypersurfaces with the prescribed $k$-th mean curvature and complete the proof of Theorem \ref{thm wsc} and Theorem \ref{thm 2d}. 

\emph{Proof of Theorem \ref{thm  wsc}}
We consider the test function
\[Q=\log \k_1-B\log u\]
where $B$ is a large constant to be considered. Furthermore, we assume that $Q$ attains its maximum at the point $X_0$. Similarly to the proof in \cite{Chu}, also in the proof in Theorem \ref{thm sc} and Theorem \ref{thm rig}, we consider the perturbed quantity
\[\tilde{Q}=\log\tilde{\k}_1-Bu,\]
which also attains its maximum at point $X_0$. 

At $X_0$, we have 
\begin{align}
    0=\frac{h_{11i}}{\k_1}-Bh_{ii}\metric{X}{\partial_i}
\end{align}
and due to Lemma 3.1 in \cite{Chu}
\begin{equation}\label{hess c}
    \begin{aligned}
        0\geqslant& 2\sum_{p>1}\frac{\dot{F}^{11,pp}h_{11p}^2}{\k_1^2}+2\sum_{i>1}\frac{\dot{F}^{11}h_{11i}^2}{\k_1(\k_1-\k_i+1)}-\frac{\ddot{F}^{pp,qq}h_{pp1}h_{qq1}}{\k_1}+2\sum_{p>1}\frac{\dot{F}^{pp}h_{1pp}^2}{\k_1(\k_1-\k_p+1)}\\
        &-\sum_{p=1}^n\frac{\dot{F}^{pp}h_{11p}^2}{\k_1^2}
        +\sum_{i=1}^n\f\frac{B}{C}-C\r\dot{F}^{ii}\k_i^2-CB.
    \end{aligned}
\end{equation}
Then
\begin{equation}\label{fppqq c}
    \begin{aligned}
&2\sum_{p>1}\frac{\dot{F}^{11,pp}h_{11p}^2}{\k_1^2}+2\sum_{i>1}\frac{\dot{F}^{11}h_{11i}^2}{\k_1(\k_1-\k_i+1)}-\sum_{p=1}^n\frac{\dot{F}^{pp}h_{11p}^2}{\k_1^2}\\
\geqslant &2\sum_{i>1}\frac{\dot{F}^{ii}h_{11i}^2}{\k_1(\k_1-\k_i+1)}-\sum_{p=1}^n\frac{\dot{F}^{pp}h_{11p}^2}{\k_1^2}\\
\geqslant &\sum_{i>1}\frac{\k_1+\k_i-1}{\k_1-\k_i+1}\frac{\dot{F}^{ii}h_{11i}^2}{\k_1}-\frac{\dot{F}^{11}h_{111}^2}{\k_1^2}.
    \end{aligned}
\end{equation}
According to Lemma 11 in \cite{RW2}, we have 
\[-\k_i\leqslant \frac{n-k}{k}\k_1\]
which implies that
\begin{align}\label{2k>n}
 \k_1+\k_i-1=\frac{2k-n}{k}\k_1-1>0
\end{align}
by assuming that $2k>n$ and $\k_1\gg 1$.
Or under the assumption of Theorem \ref{thm  wsc} that $\k_i\geqslant -A$, we have
\begin{align}\label{sc}
 \k_1+\k_i-1\geqslant \k_1-A-1>0   
\end{align}
by assuming $\k_1\geqslant A+1$ rather than the condition $2k>n$.
Plugging \eqref{fppqq c} and \eqref{sc} into \eqref{hess c}, we have 
\begin{equation}\label{hess c2}
    \begin{aligned}
        0\geqslant& -\frac{\ddot{F}^{pp,qq}h_{pp1}h_{qq1}}{\k_1}+2\sum_{p>1}\frac{\dot{F}^{pp}h_{1pp}^2}{\k_1(\k_1-\k_p+1)}-\frac{\dot{F}^{11}h_{111}^2}{\k_1^2}
        +\sum_{i=1}^n\f\frac{B}{C}-C\r\dot{F}^{ii}\k_i^2-CB\\
        \geqslant&\sum_{i=1}^n\f\frac{B}{C}-C\r\dot{F}^{ii}\k_i^2-K\frac{(\dot{F}^{ii}h_{ii1})^2}{\s_k\k_1}-CB.
    \end{aligned}
\end{equation}
Here we use the crucial lemma \ref{iq sc}.
Differentiating \eqref{eq c}, we have
\begin{align}
    \sum_i\dot{F}^{ii}h_{ii1}=h_{11}d_{\nu} f(e_1)+d_xf(e_1)
\end{align} 
which implies that
\begin{align}
   \frac{\sum_i(\dot{F}^{ii}h_{ii1})^2}{\s_k\k_1}\leqslant C\k_1. 
\end{align}
Then combining Property \eqref{fk1}, we have 
\begin{equation}
    \begin{aligned}
        0\geqslant&\sum_{i=1}^n\f\frac{kB}{nC}-\frac{k}{n}C-CK\r\k_1-CB,
    \end{aligned}
\end{equation}
which implies $\k_1$ is bounded above by some constant $C>0$ only depending on $n$, $k$, $\|M\|_{C^1}$, $\inf f$ and $\|f\|_{C^2}$.\qed
\begin{rem}
    If we further assume $2k>n$, following the proof in Theorem \ref{thm  wsc}, we can derive the global $C^2$ estimates of the $k$-convex hypersurface satisfying \eqref{eq c} without the assumption $\k_i>-A$ under the following conjecture. 
    
\textbf{Conjecture 1.5}
   Suppose that $2k>n$, and $\k=(\k_1,\dots,\k_n)\in \Gamma_k$. If we assume that $\k_1\geqslant\cdots\geqslant\k_n$ and $\k_1$ is large enough, then for an arbitrary $n$-vector $\xi$ in $\R^n$ and some sufficient large $K>0$, the following inequality holds
    \begin{align}
       \k_1\f K(\dot{F}^{jj}\xi_j)^2-\ddot{\s}_k^{pp,qq}\xi_p\xi_q\r-\dot{F}^{11}\xi_1^2+(\dot{F}^{jj}+(\k_1+\k_j)\ddot{\s}_k^{11,jj})\geqslant 0.  
    \end{align}
One can refer to \cite{Tu} for a detailed proof. We note that this conjecture is a weakened version of Conjecture 2 by
in \cite{RW2} but exhibits the same ability to derive global $C^2$ estimates of the curvature equation \eqref{eq c}. Our crucial lemma \ref{iq sc} is a semi-convexity version of Congjecture 1.5.
\end{rem}

Finally, we provide a concise proof of Theorem \ref{thm 2d} using the crucial Lemma \ref{iq sc} when $k=2$.

\emph{Sketch proof of Theorem \ref{thm 2d}}
We consider the test function for some $N\gg 1$ and small $\e>0$
\begin{align}
    Q=\log \k_1-\e u+\frac{N}{2}|x|^2
\end{align}
in this case. We assume that $Q$ attains its maximum at point $X_0$. Similar to the proof in Theorem \ref{thm  wsc} using the idea in \cite{Chu}, we disturb $Q$ as below 
\begin{align}
    \tilde{Q}=\log \tilde{\k}_1-\e u+\frac{N}{2}|x|^2
\end{align}
which also attains its maximum at $X_0$.
At $X_0$, we have 
\begin{align}\label{cri 2d}
    0=\frac{h_{11i}}{\k_1}-\e \frac{h_{ii}\metric{X}{\partial_i}}{u}+Nx_i
\end{align}
and 
\begin{equation}\label{hess 2d}
    \begin{aligned}
      0\geqslant& 2\sum_{p>1}\frac{\dot{F}^{11,pp}h_{11p}^2}{\k_1^2}+2\sum_{i>1}\frac{\dot{F}^{11}h_{11i}^2}{\k_1(\k_1-\k_i+1)}-\frac{\ddot{F}^{pp,qq}h_{pp1}h_{qq1}}{\k_1}+2\sum_{p>1}\frac{\dot{F}^{pp}h_{1pp}^2}{\k_1(\k_1-\k_p+1)}\\
        &-\sum_{p=1}^n\frac{\dot{F}^{pp}h_{11p}^2}{\k_1^2}+\sum_{i=1}^n\e\dot{F}^{ii}\k_i^2+N\sum_i\dot{F}^{ii}-2NuF-C\k_1-C\\
        \geqslant &-\frac{\ddot{F}^{pp,qq}h_{pp1}h_{qq1}}{\k_1}+2\sum_{p>1}\frac{\dot{F}^{pp}h_{1pp}^2}{\k_1(\k_1-\k_p+1)}+2\sum_{i>1}\frac{\dot{F}^{ii}h_{11i}^2}{\k_1(\k_1-\k_i+1)}-\sum_{p=1}^n\frac{\dot{F}^{pp}h_{11p}^2}{\k_1^2}\\
        &+\sum_{i=1}^n\e\dot{F}^{ii}\k_i^2+N(n-1)\s_1-C\k_1-CN.
    \end{aligned}
\end{equation}
If we further assume that $-\k_i<A$ for any $1<i\leqslant n$, then applying Lemma \ref{iq sc} similar to the proof in \eqref{hess c2} we have 
\begin{equation}\label{hess 2d2}
    \begin{aligned}
      0 \geqslant &-\frac{\ddot{F}^{pp,qq}h_{pp1}h_{qq1}}{\k_1}+2\sum_{p>1}\frac{\dot{F}^{pp}h_{1pp}^2}{\k_1(\k_1-\k_p+1)}+K\frac{(\dot{F}^{ii}h_{ii1})^2}{\s_k\k_1}-\frac{\dot{F}^{11}h_{111}^2}{\k_1^2}\\
        &+\sum_{i=1}^n\e\dot{F}^{ii}\k_i^2+N(n-1)\s_1-(C+CK)\k_1-CN\\
        \geqslant &\f N(n-1)-C-CK\r\k_1-CN.
    \end{aligned}
\end{equation}

By choosing $N$ large enough (independent of $A$), we find that $\k_1$ is bounded above by some constant $C$. Both $N$ and $C$ depend only on $n$, $k$, $\|f\|_{C^2}$ $\inf f$ and $\|M\|_{C^1}$.
If $|\k_n|\geqslant A$, then \eqref{hess 2d} becomes
\begin{equation}\label{hess 2d3}
    \begin{aligned}
        0\geqslant &-\frac{\ddot{F}^{pp,qq}h_{pp1}h_{qq1}}{\k_1}+\frac{(\dot{F}^{ii}h_{ii1})^2}{\s_k\k_1}-\sum_{p=1}^n\frac{\dot{F}^{pp}h_{11p}^2}{\k_1^2}+\sum_{i=1}^n\e\dot{F}^{ii}\k_i^2+N(n-1)\s_1-C\k_1-CN\\
        \geqslant &-2C\e^2\sum_{p=1}^n\dot{F}^{pp}\k_p^2-2N^2\sum_{p=1}^n\dot{F}^{pp}X_p^2+\sum_{i=1}^n\e\dot{F}^{ii}\k_i^2+N(n-1)\s_1-C\k_1-CN\\
        \geqslant& \f-2C\e^2+\frac{\e}{2}\r\sum_{p=1}^n\dot{F}^{pp}\k_p^2+(-2nN^2|X|^2+\frac{\e}{2}\k_n^2)\dot{F}^{nn}+N(n-1)\s_1-C\k_1-CN.
    \end{aligned}
\end{equation}
where we employ \eqref{logsk} in the first inequality and the critical equation \eqref{cri 2d} in the second inequality. By setting $\e=\frac{1}{4c}$ and $A=4N\sqrt{nC}\sup|X|$, the proof can be completed in a way similar to the one presented above.

\begin{bibdiv}
\begin{biblist}
\bibliographystyle{amsplain}

\bib{And07}{article}{
    AUTHOR = {Andrews, Ben},
     TITLE = {Pinching estimates and motion of hypersurfaces by curvature
              functions},
   JOURNAL = {J. Reine Angew. Math.},
    VOLUME = {608},
      YEAR = {2007},
     PAGES = {17--33},
     }

\bib{BCGJ}{article}{
    AUTHOR = {Bao, Jiguang},
    AUTHOR = {Chen, Jingyi},
    AUTHOR = {Guan, Bo},
   AUTHOR = {Ji, Min},
     TITLE = {Liouville property and regularity of a {H}essian quotient
              equation},
   JOURNAL = {Amer. J. Math.},
    VOLUME = {125},
      YEAR = {2003},
    NUMBER = {2},
     PAGES = {301--316},
     }

\bib{CNS}{article}{
AUTHOR = {Caffarelli, L.}, 
AUTHOR = {Nirenberg, L.},
AUTHOR = {Spruck, J.},
TITLE = {The {D}irichlet problem for nonlinear second-order elliptic equations. {III}. {F}unctions of the eigenvalues of the {H}essian},
   JOURNAL = {Acta Math.},
    VOLUME = {155},
      YEAR = {1985},
    NUMBER = {3-4},
    pages={261--301},
}

\bib{CY}{article}{
    AUTHOR = {Chang, Sun-Yung Alice},
    AUTHOR = {Yuan, Yu},
     TITLE = {A {L}iouville problem for the sigma-2 equation},
   JOURNAL = {Discrete Contin. Dyn. Syst.},
    VOLUME = {28},
      YEAR = {2010},
    NUMBER = {2},
     PAGES = {659--664},
}

\bib{Chen}{article}{
    AUTHOR = {Chen, Chuanqiang},
     TITLE = {Optimal concavity of some {H}essian operators and the prescribed {$\sigma_2$} curvature measure problem},
   JOURNAL = {Sci. China Math.},
    VOLUME = {56},
      YEAR = {2013},
    NUMBER = {3},
     PAGES = {639--651},
     }

\bib{CYau}{article}{
    AUTHOR = {Cheng, Shiu Yuen}, AUTHOR = {Yau, Shing-Tung},
     TITLE = {Complete affine hypersurfaces. {I}. {T}he completeness of
              affine metrics},
   JOURNAL = {Comm. Pure Appl. Math.},
    VOLUME = {39},
      YEAR = {1986},
    NUMBER = {6},
     PAGES = {839--866},
     }

\bib{CW}{article}{
    AUTHOR = {Chou, Kai-Seng}, 
    AUTHOR = {Wang, Xu-Jia},
     TITLE = {A variational theory of the {H}essian equation},
   JOURNAL = {Comm. Pure Appl. Math.},
    VOLUME = {54},
      YEAR = {2001},
    NUMBER = {9},
     PAGES = {1029--1064},
}
     
\bib{Chu}{article}{
    AUTHOR = {Chu, Jianchun},
     TITLE = {A simple proof of curvature estimate for convex solution of
              {$k$}-{H}essian equation},
   JOURNAL = {Proc. Amer. Math. Soc.},
    VOLUME = {149},
      YEAR = {2021},
    NUMBER = {8},
     PAGES = {3541--3552},
}

\bib{CD}{article}{
    AUTHOR = {Chu, Jianchun},
        AUTHOR = {Dinew, S\l awomir},
      title={Liouville theorem for a class of Hessian equations}, 
      eprint={arXiv: 2306.13825},
}

\bib{GT}{book}{ 
Author={David, Gilbarg},
Author={Neil S. Trudinger},
Title={Elliptic Partial Differential Equations of Second Order},
Publisher={second editon, Springer},
Year={1998},
}

\bib{GQ}{article}{
    AUTHOR = {Guan, Pengfei},
    AUTHOR = {Qiu, Guohuan},
     TITLE = {Interior {$C^2$} regularity of convex solutions to prescribing scalar curvature equations},
   JOURNAL = {Duke Math. J.},
    VOLUME = {168},
      YEAR = {2019},
    NUMBER = {9},
     PAGES = {1641--1663},
}

\bib{GRW}{article}{
    AUTHOR = {Guan, Pengfei},
       AUTHOR = {Ren, Changyu},
    AUTHOR = {Wang, Zhizhang},
     TITLE = {Global {$C^2$}-estimates for convex solutions of curvature
              equations},
   JOURNAL = {Comm. Pure Appl. Math.},
    VOLUME = {68},
      YEAR = {2015},
    NUMBER = {8},
     PAGES = {1287--1325},
}

\bib{HZ}{article}{
AUTHOR = {Hong, Han},
     AUTHOR = {Zhang, Ruijia},
 TITLE = {Curvature estimates for semi-convex solutions of asymptotic Plateau problem in $\mathbb{H}^{n+1}$},
eprint={arXiv:2408.09428},
}

\bib{HS}{article}{
    AUTHOR = {Huisken, Gerhard},
    AUTHOR = {Sinestrari, Carlo},
     TITLE = {Convexity estimates for mean curvature flow and singularities
              of mean convex surfaces},
   JOURNAL = {Acta Math.},
    VOLUME = {183},
      YEAR = {1999},
    NUMBER = {1},
     PAGES = {45--70},   
 }

\bib{LRW}{article}{
    AUTHOR = {Li, Ming},
     AUTHOR = {Ren, Changyu},
      AUTHOR = {Wang, Zhizhang},
     TITLE = {An interior estimate for convex solutions and a rigidity
              theorem},
   JOURNAL = {J. Funct. Anal.},
    VOLUME = {270},
      YEAR = {2016},
    NUMBER = {7},
     PAGES = {2691--2714},
     }
     
\bib{Lu2}{article}{
    AUTHOR = {Lu, Siyuan},
     TITLE = {Curvature estimates for semi-convex solutions of {H}essian
              equations in hyperbolic space},
   JOURNAL = {Calc. Var. Partial Differential Equations},
    VOLUME = {62},
      YEAR = {2023},
    NUMBER = {9},
     PAGES = {Paper No. 257, 23},
     }

\bib{Pog}{book}{
author={A.V. Pogorelov}, 
title={The Minkowski Multidimensional Problem}, 
publisher={John Wiley}, 
year={1978},
}

\bib{RW1}{article}{
    AUTHOR = {Ren, Changyu},
    AUTHOR = {Wang, Zhizhang},
     TITLE = {On the curvature estimates for {H}essian equations},
   JOURNAL = {Amer. J. Math.},
    VOLUME = {141},
      YEAR = {2019},
    NUMBER = {5},
     PAGES = {1281--1315},
    
}
\bib{RW2}{article}{
    AUTHOR = {Ren, Changyu},
    AUTHOR = {Wang, Zhizhang},
     TITLE = {The global curvature estimate for the {$n-2$} {H}essian
              equation},
   JOURNAL = {Calc. Var. Partial Differential Equations},
    VOLUME = {62},
      YEAR = {2023},
    NUMBER = {9},
     PAGES = {Paper No. 239, 50},
}

\bib{SY}{article}{
    AUTHOR = {Shankar, Ravi}, 
    AUTHOR = {Yuan, Yu},
     TITLE = {Rigidity for general semiconvex entire solutions to the
              sigma-2 equation},
   JOURNAL = {Duke Math. J.},
    VOLUME = {171},
      YEAR = {2022},
    NUMBER = {15},
     PAGES = {3201--3214},
     }

\bib{SX}{article}{
    AUTHOR = {Spruck, Joel},
    AUTHOR = {Xiao, Ling},
     TITLE = {A note on star-shaped compact hypersurfaces with prescribed
              scalar curvature in space forms},
   JOURNAL = {Rev. Mat. Iberoam.},
    VOLUME = {33},
      YEAR = {2017},
    NUMBER = {2},
     PAGES = {547--554},
     }

\bib{TW}{incollection}{
author={Neil S. Trudinger},
author={Xu-Jia Wang},
title={The Monge-Ampere equation and its geometric applications}, 
booktitle={Handbook of geometric analysis},
Publisher={International Press},
Year={2008}, 
Volume={I}, 
pages={467-524},
}

\bib{Tu}{article}{
author={Qiang Tu},
      title={Pogorelov type estimates for semi-convex solutions of Hessian equations and related rigidity theorems}, 
      eprint={arXiv:2405.02939},
}

\bib{War}{article}{
    AUTHOR = {Warren, Miccah},
     TITLE = {Nonpolynomial entire solutions to {$\sigma_k$} equations},
   JOURNAL = {Comm. Partial Differential Equations},
    VOLUME = {41},
      YEAR = {2016},
    NUMBER = {5},
     PAGES = {848--853},
}

\bib{Yang}{article}{
    AUTHOR = {Yang, Fengrui},
     TITLE = {Prescribed curvature measure problem in hyperbolic space},
   JOURNAL = {Comm. Pure Appl. Math.},
    VOLUME = {77},
      YEAR = {2024},
    NUMBER = {1},
     PAGES = {863--898},
}
\end{biblist}
\end{bibdiv}
\end{document}